\theoremstyle{plain}
  \newtheorem{prop}{Proposition}[section]
  \newtheorem{thm}[prop]{Theorem}
  \newtheorem{cor}[prop]{Corollary}
  \newtheorem{lem}[prop]{Lemma}
  \newtheorem*{lem*}{Lemma}
  \newtheorem*{claim*}{Claim}
\newtheorem{defn}[prop]{Definition}
  \newtheorem*{defn*}{Definition}
  \newtheorem*{rem*}{Remarks}
  \newtheorem*{rema*}{Remark}
  \newtheorem{rems*}{Remarks}
\def\ntn{n\times n}
\def\half{{\tfrac 12}}
\def\Si{\Sigma}
\def\si{\sigma}
\def\p1{P^{-1}}
\def\Th{\Theta}
\def\eq{\eqref}
\def\midl{{\mathop{\mid}\limits}}
\def\La{\Lambda}
\def\be{\begin{equation}}
\def\ee{\end{equation}}
\def\ul{\underline}
\def\ue{\underline{e}}
\def\la{\lambda}
\def\intl{\int\limits}
\def\om{\omega}
\def\Om{\Omega}
\def\de{{\delta}}
\def\rhd{\rightharpoondown}
\def\rhd{\rightharpoondown}
\def\thalf{{\textstyle{\frac 12}}}
\def\k{\kappa}
\def\rank{\text{rank}}
\def\cald{{\cal{D}}}
\def\cale{{\cal{E}}}
\def\calk{{\cal{K}}}
\def\calt{{\cal{T}}}
\def\bbr{\mathbb{R}}
\def\bbs{\mathbb{S}}
\def\cals{\mathcal{S}}
\def\calt{\mathcal{T}}
\def\Ga{\Gamma}
\def\Om{\Omega}
\def\eqadef{{\stackrel{{def}}{=}}}
\def\intl{\int\limits}
\def\suml{\sum\limits}
\def\rhd{\rightharpoondown}
\def\sgn{\mathrm{sgn}}
\def\p{\partial}
\def\pOm{\partial\Omega}
\numberwithin{equation}{section}
\date{} %
\numberwithin{figure}{section}
\newcommand{\vertiii}[1]{{\left\vert\kern-0.25ex\left\vert\kern-0.25ex\left\vert #1
  \right\vert\kern-0.25ex\right\vert\kern-0.25ex\right\vert}}
\begin{document}
\title{On the symmetry group for systems of conservation laws}
\author{
}
\maketitle
\centerline{\large Michael Sever}
\centerline{Department of Mathematics}
\centerline{ The Hebrew University}
\centerline{Jerusalem, ISRAEL}

\bigskip

\begin{abstract}
A case can be made that the utility of quasi-linear systems of conservation laws as physical models is largely limited to Euler system models of fluid flow, at least in higher dimensions.  Qualified corroboration of this conjecture is obtained here, by attempt to extend the class of systems attractive as physical models via the associated symmetry group.
\end{abstract}

\section{Introduction}

First order, quasi-linear systems of conservation laws typically appear as formal limits of vanishing dissipation in physical models containing higher order terms, regarding such models   as singularly perturbed.  Models are thus simplified, and approximation of solutions thereof expedited, at the expense of an enlarged solution class including weak solutions, typically piecewise continuous.  A supplemental admissibility condition is required to recover uniqueness and even  boundedness in the class of weak solutions for given data.

Verification of existence of a vanishing dissipation limit in the class of admissible weak solutions remains limited to special cases, including scalar conservation laws and hyperbolic systems with only two independent variables.  In contrast, abundant empirical evidence, largely obtained using Euler systems and reduced or modified forms thereof, suggests that a vanishing dissipation approximation is more generally possible, extending to higher dimensions and not necessarily requiring hyperbolicity.

Reconciliation with existing theory is arguably obtained \cite{S2} by association of empirical evidence with a weakened form of well-posedness of the corresponding boundary-value problems.  Requisite conditions for such include the existence of an entropy extension of a given system, relation of a given system  to a hyperbolic system and existence of local viscous approximation of discontinuities.  Euler systems and reduced forms thereof emerge as leading if not exclusive candidates for application of this analysis.

One may thus suspect that at least in higher dimensions, applicability of a vanishing dissipation limit is largely limited to Euler systems and reduced or modified forms thereof.  Partial corroboration of this possibility is obtained below.

Within the class of systems of conservation laws, Euler systems are indeed special.  We discuss two classes of Euler systems here, isentropic systems and systems including conservation of energy. Members of each class are determined by an equation of state, and are equipped with an entropy extension  \cite{FL,L}.  Convexity of the entropy density and hyperbolicity of the system follow from simple conditions on the equation of state, independent of the dimension of the system.

In addition to providing an admissibility condition on weak solutions in the form of the familiar entropy inequality, the existence of an entropy extension materially facilitates expression of the nontrivial symmetry group associated with a given system of conservation laws.  Euler systems are distinguished in this context, exclusively admitting rotation and Galilean symmetries \cite{S1}.
On this basis, we speculate that in higher dimensions, applicability of a vanishing dissipation approximation is related to richness of the symmetry group of the system of conservation laws so obtained.

A special class of systems of conservation laws is identified in the following section,  equipped with an entropy extension and  satisfying an additional structural condition.  Expression of the nontrivial symmetry group associated with such systems is further simplified, without restriction on the specific form thereof.  This class of systems is  a natural  form of extension of systems with two independent variables to systems with  higher dimensional domains.

Systems of this class are shown to satisfy attractive properties as regards appearance as vanishing dissipation limits, and as physical models, in the sense of predictable effects of reduction or modification on the associated nontrivial symmetry group.

The hyperbolic members of this class, with maximal richness of the symmetry group,  are shown to correspond to Euler systems.  And in higher dimensions, hyperbolic systems with nonempty but sub-maximal richness of the nontrivial symmetry group correspond to suitable modifications of Euler systems.

\subsection{Notation and review}

Systems of conservation laws of dimension $n \ge 3$ with $m \ge 3$ independent variably are here denoted
\be\label{aa} \Sigma_j (z) = 0, \; \; j = 1, \cdots, n,\ee
\be\label{ab} \Sigma_j(z) \, \eqadef\, \suml^m_{i = 1}  (a_{ij} (z))_{x_i},\ee
\be\label{ac} x \in \Om \subset \bbr^m, \; \; z: \Om \to D \subseteq \bbr^n,
\; \; \, \Om, D \; \hbox{open}. \ee

In \eqref{aa}, \eqref{ab}, \eqref{ac}$, \Sigma $ is  understood as an $n$-row vector distribution within $\Om$; the condition \eqref{aa} holds weakly.  In \eqref{ac}, $z$ may be discontinuous and possibly even unbounded on a set of measure zero.  The $a_{ij} (z)$ are system-specific given functions of the dependent variable $z$.

The symmetry group $\cals$ associated with such a system \eqref{aa} is understood as an essential feature of whatever physical model from which the system derives, and as an ingredient of subsequent analysis thereof \cite{BK}.  Here we identify and discuss a particularly attractive class of systems in this context.  Additional motivation is obtained from \cite{S2}, in which suitability of systems of conservation laws for computational investigation is related to the associated symmetry group.

Elements $\cale \in \cals$ associated with a system \eqref{aa} are denoted here by triples
\begin{align}\label{ad} &\cale \, \eqadef\, (x^*(x), \; z^*(z(x)), N),\nonumber \\
&x^*(x) \in \bbr^m, \; \, z^*(z(x)) \in \bbr^n \; (\hbox{for}\,  z(x) \in D), \; N \in M^{n\times n}\end{align}
such that the infinitesimals $x^* (x), z^*(z)$ imply
\be\label{ae}(\Sigma (z))^* = - \Sigma(z) N,\ee
using
\be\label{aea} (z(x))^* = z^* (z(x)) + z_x(x) x^*(x)\ee
almost everywhere in $\Om$.

Given $x, \Om, z, D$ in \eqref{ac}, any such $\cale$ determines a  one-parameter Lie group of transformations
\be\label{af} \calt_\cale (z, \Om) = \{ \tilde z (\tau; z, x), \tilde x (\tau; x), \tilde \Om (\tau), \tau \in \bbr \}\ee
from
\begin{align}\label{ag} &\tilde x_\tau (\tau; x) = x^* (\tilde x (\tau; x)), \; \, \tilde x(0, x) = x \in \Om\nonumber \\
&\tilde z_\tau (\tau; z, x) = z^* (\tilde z (\tau; z, x)), \; \, x \in \tilde \Om(\tau), \; \;  \tilde z(0; z, x) = z(x) \in D\nonumber \\
&\tilde \Om (\tau) = \{ \tilde x (\tau; x)\}\end{align}
understanding $\tau$ restricted as necessary so that $\tilde z(\tau; z, x) \in D$.

For $z$ satisfying \eqref{aa}, however weakly, it follows from \eqref{ae} that $\calt_\cale$ determines an equivalence class of solutions and domains.

Any system \eqref{ab} is equipped with two trivial classes of symmetries, with $z^*(z)$ vanishing identically.

Translation symmetries with respect to $\mu \cdot x$ correspond to
\be\label{ai} \cale_T = (x^*(x) = \mu \in \cals^m,\; \, z^*(z) = 0,\; N = 0).\ee

Uniform scaling of the independent variables $x$ corresponds to
\be\label{aj} \cale_S = (x^*(x) = x, \, z^*(z) = 0, \; N = -I_n),\ee
using $I$ as identity matrices of indicated dimension throughout.

Of interest here are possible additional, system-specific symmetries denoted  $\cals_R$, such as rotation symmetries for systems with multiple ``space variables" $x_i$, for which $z^*(z)$ does not vanish identically.  In particular, the algebraic structure of the resulting symmetry group $\cals$, with subsets $\cals_T, \cals_S, \cals_R$, is materially complicated by the existence of nonempty $\cals_R$.

Given $\cale_1, \cale_2 \in \cals$, by convention
\be\label{al} \cale_1 + \cale_2 \, \eqadef\, (x^*(x) = x^*_1 (x) + x^*_2 (x), \; z^*(x) = z^*_1 (z) + z^*_2(z), \; N = N_1 + N_2)\ee
satisfies
\be\label{ak} \cale_1 + \cale_2 \in \cals\ee
only if the corresponding $\calt_1, \calt_2$ commute,
\be\label{am} \calt_1 \circ \calt_2 = \calt_2 \circ \calt_1.\ee

With the dependent variable $z$ to be judiciously chosen below, removing additive constants from $x, z$ as appropriate, attention is here  restricted to  $\cale \in \cals_R$ of the form
\be\label{ba} \cale = ( x^*(x) = Xx, z^* (z) = (Zz+\om),  N), \; X \in M^{m\times m}, Z\in M^{n\times n}, \, \om \in\bbr^n. \ee

Such $\cale$ are applied to \eqref{ae} using
\begin{align}\label{bb} &f^* (x) = f_x (x) x^*(x) = f_x (x) Xx\nonumber \\
&g^*(z) = g_z (z) z^*(z) = g_z(z) (Zz+ \om);\\ 
\label{bc} &(f_x(x))^* = (f^*(x))_x - f_x(x)X\nonumber \\
&(g_z(z))^* =( g^*(z))_z - g_z (z) Z\nonumber \\
& (g_{zz} (z))^* = (g^*(z))_{zz} - Z^\dag g_{zz} (z) - g_{zz} (z)Z.\end{align}

By convention, for components of vector functions
\be\label{bd} (f_i(\cdot))^* \,\eqadef\, ((f(\cdot))^*)_i;\ee
 for $x, z$-derivatives
\be\label{be} (f_{x_i} (x))^*\,\eqadef\, ((f_x(x))^*)_i,  \ee
\be\label{bf} (g_{z_j} (z))^* \,\eqadef\, ((g_z(z))^*)_j;\ee
and for the matrix function $a_{\cdot\cdot}$
\be\label{bfa} a^*_{ij} \, \eqadef\, (a^*)_{ij}.\ee

A sufficient condition for \eqref{ae} is the following.

\begin{lem} Assume $X\in M^{m\times m}, \; Z\in M^{n\times n}, \, \om \in \bbr^n,  c_Z \in \bbr $ such that using \eqref{bfa} in the form
\be\label{bfb} a^*_{ij} = a_{ij, z} z^* = a_{ij, z} (Zz+ \om),\ee
it follows that
\be\label{bg} a(z)^* = (X + c_Z I_m) a(z) - a(z) Z.\ee

Then $\cale \in \cals_R, \; \cale$ of the form \eqref{ba} with
\be\label{bh} N = Z - c_Z I_n.\ee
\end{lem}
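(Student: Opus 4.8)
The plan is to verify the defining relation \eqref{ae} directly and componentwise. Writing $\Sigma_j = \sum_{i=1}^m (a_{ij}(z))_{x_i}$, the goal is to show $(\Sigma_j)^* = -(\Sigma(z)N)_j$ for the matrix $N$ in \eqref{bh}; since $\cale$ already has the form \eqref{ba} by construction, this relation is the entire content of the claim $\cale\in\cals_R$.

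First I would apply the $*$-operation to each summand $(a_{ij})_{x_i}$. Because $a_{ij}$ depends on $x$ only through $z(x)$, its star is the $z$-star $a^*_{ij}$ of \eqref{bfb} and carries no separate transport term; the differentiation $\partial_{x_i}$, however, does not commute with $*$, and the first line of \eqref{bc} supplies the correction generated by $x^* = Xx$. I therefore expect
\be (\Sigma_j)^* = \sum_i (a^*_{ij})_{x_i} - \sum_{i,k} X_{ki}\,(a_{ij})_{x_k}, \ee
the second sum being exactly the commutator of $*$ with $\sum_i \partial_{x_i}$. The care needed here is to resist inserting a naive transport term $(\Sigma_j)_x Xx$: the prolonged action produces no such term, since $\Sigma_j$ has no explicit $x$-dependence, and the only $X$-contribution is the commutator correction above.

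Next I would substitute the hypothesis \eqref{bg}, i.e. $a^*_{ij} = \sum_k X_{ik} a_{kj} + c_Z a_{ij} - \sum_l a_{il} Z_{lj}$, into $\sum_i (a^*_{ij})_{x_i}$ and differentiate through by the chain rule. This splits into three groups. The $c_Z$-group collapses to $c_Z \Sigma_j$; the $Z$-group collapses to $-\sum_l \Sigma_l Z_{lj} = -(\Sigma(z)Z)_j$; and the $X$-group gives $\sum_{i,k} X_{ik}(a_{kj})_{x_i}$, which after relabeling $i \leftrightarrow k$ becomes $\sum_{i,k} X_{ki}(a_{ij})_{x_k}$ and hence cancels the commutator correction. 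What remains is $(\Sigma_j)^* = c_Z\Sigma_j - (\Sigma(z)Z)_j = -(\Sigma(z)(Z - c_Z I_n))_j$, which by \eqref{bh} is $-(\Sigma(z)N)_j$; this is \eqref{ae}, establishing $\cale \in \cals_R$ with the stated $N$.

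I expect the main obstacle to be the very first step: obtaining the commutator correction with the correct sign and index placement, and recognizing that no independent transport term arises. The entire argument turns on that single correction cancelling the $Xa$ contribution coming from \eqref{bg}; once this cancellation is in hand, the surviving collapses to $c_Z\Sigma_j$ and $-(\Sigma(z)Z)_j$ are routine divergence manipulations, and the identification of $N$ via \eqref{bh} is immediate.
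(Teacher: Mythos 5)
Your algebra is correct and isolates exactly the right cancellation: the commutator correction $-\sum_{i,k}(a_{ij})_{x_k}X_{ki}$ coming from the first line of \eqref{bc} annihilates the $\sum_{i,k}X_{ik}(a_{kj})_{x_i}$ term produced by \eqref{bg} after the relabelling $i\leftrightarrow k$, leaving $(\Sigma_j)^*=c_Z\Sigma_j-(\Sigma Z)_j=-(\Sigma N)_j$ with $N=Z-c_ZI_n$. This is the same index-interchange cancellation the paper performs in \eqref{bm}, but you carry it out on the strong form of $\Sigma_j=\sum_i(a_{ij}(z))_{x_i}$, whereas the paper works dually on the test-function side: it applies $*$ to the integrated identity \eqref{bj}, uses the convention $\theta^*=Z\theta$ of \eqref{bl}, and shows $\bigl(\sum_{i,j}a_{ij}\theta_{j,x_i}\bigr)^*=c_Z\sum_{i,j}a_{ij}\theta_{j,x_i}$, from which \eqref{ae} follows via \eqref{bk}. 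The difference matters here because the paper's standing hypotheses make $\Sigma(z)$ a distribution and allow $z$ (hence $a(z)$) to be discontinuous, so the pointwise differentiation of $a_{ij}(z(x))$ and the prolongation formula $((a_{ij})_{x_i})^*=(a^*_{ij})_{x_i}-\sum_k(a_{ij})_{x_k}X_{ki}$ are not directly licensed; the weak formulation is precisely the device that gives these manipulations meaning, since all derivatives then fall on the smooth $\theta$. Your computation is the formal core of the argument and transfers essentially verbatim to the test-function version, but as written it only establishes \eqref{ae} for smooth $z$; to claim $\cale\in\cals_R$ in the sense the paper intends you would need to restate it in the weak form, which is what the paper's proof does.
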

\begin{rema*} The set of $(X, Z, \om)$ satisfying \eq{ba}, \eq{bg} is a linear vector space.
\end{rema*}
\begin{proof}  We introduce test functions for the weak form of \eqref{aa},
\be\label{bi} \theta : \Om \to \bbr^n\ee
sufficiently smooth, vanishing on the boundary $\pOm$, otherwise arbitrary.  By convention, throughout, for any $\cale \in \cals_R$,
\be\label{bl} \theta^* = Z\theta.\ee

Then the weak form of \eqref{aa}, \eqref{ab}, after partial integration
\be\label{bj} \intl_\Om \Si(z)\theta = - \intl_\Om \suml^m_{i = 1} \suml^n_{j = 1} a_{ij} (z) \theta_{j, x_i},\ee
satisfies, for any $\cale \in \cals_R$,
\be\label{bk} \intl_\Om (\Si (z)\theta)^* = \intl_\Om ((\Si(z))^*\theta +\Si(z) \theta^*).\ee

Using \eqref{bj} in \eqref{bk}, then \eqref{bd}, \eqref{be}, \eqref{bfa}, then \eqref{bg}, \eqref{bl}, \eqref{bc},

\begin{align}\label{bm}
 (\suml^m_{i = 1} &\suml^n_{j = 1} a_{ij} \theta_{j, x_i})^* = \suml^m_{i = 1} \suml^n_{j = 1} (a^*_{ij} \theta_{j, x_i} + a_{ij} (\theta_{j, x_i})^*)\nonumber \\
&= \suml^m_{i = 1} \suml^n_{j = 1} ((a^*)_{ij} \theta_{j, x_i} + a_{ij} (\theta_x)^*_{ji}\nonumber \\
&= \suml^m_{i = 1} \suml^n_{j = 1}(((X + c_Z I_m) a-aZ)_{ij} \theta_{j, x_i}\nonumber \\
&\qquad \quad + a_{ij} ((Z\theta)_{j, x_i} - (\theta_{j, x} X)_i))\nonumber \\
&= \suml^m_{i = 1} \suml^n_{j = 1}(( \suml^m_{k = 1} X_{ik} a_{kj} + c_Z a_{ij} - \suml^n_{l = 1} a_{i_l} Z_{lj}) \theta_{j, x_i}\nonumber \\
&\qquad \; \; + a_{ij} (\suml^n_{l = 1} Z_{jl} \theta_{l, x_i} - \suml^m_{k = 1} \theta_{j, x_k} X_{ki}))\nonumber \\
&= c_Z \suml^m_{i = 1}  \suml^n_{j = 1} a_{ij} \theta_{j, x_i},\end{align}
judiciously interchanging $i$ and $k$, \ $j $ and $l$ in the last step.

Now $\cale \in \cals_R$ and \eqref{bh} follow from \eqref{bm}, \eqref{bk}, \eqref{bl}.
\end{proof}

The condition \eqref{bg} may be used directly to verify (and in principle determine) the symmetry group $\cals_R$.  The procedure is materially simplified, however, for systems \eqref{ab} equipped with an entropy extension [L], not necessarily convex.

An entropy extension is an $m$-vector function $q$ with components
\be\label{bn} q_i = q_{i} (a_{i1}, \ldots, a_{in}), \; i = 1, \cdots, m,\ee
of class $C^1$
such that sufficiently smooth solutions \eqref{aa} satisfy an additional conservation law
\be\label{bo} \suml^m_{i = 1} (q_i (a_{i \cdot} (z)))_{x_i} = 0.\ee

Such mandates the choice of  $z$, in \eqref{ab}, \eqref{ac} and throughout, such that for sufficiently smooth $z$, formally,
\be\label{bp} \suml^m_{i = 1} (q_i (a_{i\cdot} (z)))_{x_i} = \suml^n_{j = 1} \Sigma_j (z) z_j\cdot, \ee
thus identifying $z$ [G,M] as the ``symmetric" dependent variable for the given system,  satisfying
\be\label{br} \frac{\partial q_i(a_{i\cdot })}{\partial a_{ij}} = z_j, \; \, i = 1, \cdots, m, \; \, j = 1, \cdots, n\ee
excluding $i, j$ such that $a_{ij}$ vanishes identically.

Then the $m$-vector Lagrange dual
\begin{align}\label{bs} \psi_i(z) & \eqadef  \suml^n_{j = 1} a_{ij} \frac{\partial q_i}{\partial a_{ij} } - q_i (a_{i\cdot})\nonumber \\
& = \suml^n_{j = 1} a_{ij} (z) z_j - q_i (a_{i\cdot} (z))\end{align}
satisfies
\be\label{bt} \frac{\partial\psi_i(z)}{\partial z_j} = a_{ij} (z), \; \; i = 1, \cdots, m, \; \, j = 1, \cdots, n,\ee
having tacitly assumed
\be\label{bta} \psi\in C^2(D \to \bbr^m).\ee

The system $\Sigma(z)$ is hyperbolic, with
\be\label{btb} e_H \in \bbr^m\ee
denoting a time-like direction, if
\be\label{btc}  (e_H \cdot \psi)_{zz} (z) \ge 0\ee
in the sense of $n$-matrices, with strict inequality almost everywhere in $D$.   Such is equivalent to $e_H \cdot q$ convex in the components of $e^\dag_H a$.

For systems \eqref{aa}, \eqref{ab} equipped with an entropy extension, expression of the nontrivial symmetry group \eqref{bg} is materially simplified using the potential function $\psi(z)$.  A convexity condition on $q$ or $\psi$ is not required in this context.

By convention, trivial symmetries $x^*$ vanishing, $z^*$ constant, using \eq{bt}, are not included in the nontrivial symmetry group $\cals_R$.

The following is extension of a previous  result \cite{S1}.

\begin{thm} Given \eqref{bt}, a sufficient condition for \eqref{bg} is
\be\label{bu} \psi(z)^* = (X + c_Z I_m) \psi(z).\ee
\end{thm}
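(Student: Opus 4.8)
The plan is to obtain the matrix identity \eqref{bg} directly from the hypothesis \eqref{bu} by differentiating in $z$, the key structural input being the potential relation \eqref{bt}. That relation identifies the coefficient matrix with the $z$-gradient of the dual potential, $a(z) = \psi_z(z)$ (an $m\times n$ matrix, since $\psi$ is an $m$-vector and $z$ an $n$-vector, matching the index range of $a$). Consequently the starred object $a^*$ appearing on the left of \eqref{bg} is precisely $(\psi_z)^*$, so the whole problem reduces to computing the star of a $z$-gradient and comparing with the hypothesis.

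First I would specialize the general commutation rule \eqref{bc} to $g=\psi$, which reads $(\psi_z)^* = (\psi^*)_z - \psi_z Z$. Substituting $a=\psi_z$ from \eqref{bt} gives $a^* = (\psi^*)_z - a(z)Z$, which already supplies the $-a(z)Z$ term on the right of \eqref{bg}. Next I would insert the hypothesis \eqref{bu}, $\psi^* = (X + c_Z I_m)\psi$, and differentiate in $z$: because $X + c_Z I_m$ is constant in $z$, it factors through the derivative, yielding $(\psi^*)_z = (X + c_Z I_m)\psi_z = (X + c_Z I_m)a(z)$. Combining the two displays gives $a(z)^* = (X + c_Z I_m)a(z) - a(z)Z$, which is exactly \eqref{bg}; the conclusion $\cale \in \cals_R$ with $N = Z - c_Z I_n$ as in \eqref{bh} then follows at once from the preceding Lemma.

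The argument is short, and the only delicate point is bookkeeping around the non-commutation of the star operation with $z$-differentiation: one must use \eqref{bc}, which carries the correction term $-\psi_z Z$, rather than naively commuting $*$ and $\partial_z$, and that correction must be accounted for exactly once. The enabling fact is that \eqref{bt} makes $a$ a genuine $z$-gradient, so the Hessian symmetry of $\psi\in C^2$ (assumption \eqref{bta}) legitimizes \eqref{bc} as applied to $\psi$; no convexity or definiteness of $\psi_{zz}$ is needed, consistent with the remark preceding the theorem. The constancy of $X + c_Z I_m$ in $z$ is what permits the clean factorization out of $(\psi^*)_z$, and verifying that $X$ acts on the $m$-vector $\psi$ from the left while $Z$ acts on the $n$-index of $a$ from the right is the main place a transcription error could intrude.
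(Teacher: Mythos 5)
Your proof is correct and is precisely the argument the paper intends: the paper's proof consists of the single sentence ``This follows immediately using \eqref{bt}, \eqref{bc}, \eqref{bfa},'' and your write-up simply makes explicit the computation $a^* = (\psi_z)^* = (\psi^*)_z - \psi_z Z = (X + c_Z I_m)a(z) - a(z)Z$ that those three citations encode. No difference in approach.
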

\begin{rema*} The arbitrary uniform scaling of $z, q, \psi, \om$ in \eqref{bn}, \eqref{bo}, \eqref{bp}, \eqref{br}, \eqref{bs}, \eqref{bt}, \eqref{ba} is unimportant and ignored throughout.
\end{rema*}
\begin{proof} This follows immediately using \eqref{bt}, \eqref{bc}, \eqref{bfa}.
\end{proof}

An arbitrary additive linear multiple of $z$ in the expression for $\psi(z)$ satisfying \eqref{bt}, \eqref{bs} is removed by the condition \eqref{bu}.

Using \eqref{bc}, \eqref{be}, \eq{ab}, \eq{bt}  in \eqref{bs}
\be\label{bv} q^* = (X + c_Z I_m)q + \psi_z\om.\ee

In \eqref{bv} using \eqref{bc}, \eqref{be}
\be\label{bw} (\triangledown \cdot q)^* = c_Z \triangledown \cdot q + \Si(z)\om,\ee
so the familiar entropy inequality
\be\label{bx} \triangledown \cdot q \le 0\ee
is invariant under $\cals_R$ in this generality, using \eq{aa}.

\subsection{Related systems}

A given system \eqref{aa}, \eqref{ab} determines an equivalence class of systems
\be\label{ap}\Sigma^{QU}_j (z^Q)_j \, \eqadef\,\suml^m_{i = 1} (a^{QU}_{ij} (z^Q))_{x^U_i}, j = 1, \dots, n,\ee
from linear combinations of the $x_i, a_{ij}$, of the form
\be\label{an} x^U = Ux,\ee
\be\label{ao} a^{QU} (z^Q) = U a(z) Q^{-1},\ee
for arbitrary nonsingular $m$-matrix $U$, $n$-matrix $Q$.

Comparing \eqref{ap} with \eqref{ab}, using \eqref{an}, \eqref{ao},
\be\label{aq} \Si^{QU} (z^Q) = \Si(z) Q^{-1},\ee
making solution sets of \eqref{ab}, \eqref{ap} isomorphic.

For systems $\Si$ such that \eqref{bt} holds, the condition \eqref{ao} is satisfied with
\be\label{ana} a^{QU}_{i\cdot} (z^Q) = \psi^U_{i, z^Q} (z^Q) = \psi^U_{i, z}(z^Q)Q^{-1} ,
\ee
\be\label{anb} z^Q = Qz,
\ee
\be\label{anc} \psi^U (z^Q) = U \psi(z).
\ee

From \eqref{bs}, \eqref{ana}, \eqref{anb}, \eqref{anc}, the corresponding entropy extension is
\be\label{and} q^U (a^{QU}_{\cdot\cdot}) = Uq(a_{\cdot\cdot});
\ee
the entropy inequality \eqref{bx} is unaffected.

Whatever nontrivial symmetry group $\cals_R$ survives such transformations, making similarity transformations
\be\label{ar} X^U = UXU^{-1}, \; \; Z^Q= QZQ^{-1}, \om^Q = Q\om\ee
in \eqref{ba}.  The constant $c_Z$ in \eqref{bg}, \eqref{bh}, \eqref{bu}, \eqref{bw} is also unaffected.

Also determined from a ``primitive" system $\Si $ \eqref{ab} are various related systems $\Si'$, which may or may not be of the same form, generally with different dimensions.  The solution set of such $\Si'$ will be related, but not isomorphic, to that of $\Si$, thus relating the associated symmetry group $\cals'$ for $\Si'$ to $\cals$ for $\Si$.

Familiar examples of $\Sigma'$ are reduced systems, satisfied by the subset of solutions of \eqref{aa}, \eqref{ab} invariant under some $\cale_1 \in \cals$, satisfying
\be\label{da} (z(x))^* = z^* (z(x)) + z_x (x) x^* (x) = 0\ee
using \eqref{aea} for $\cale_1$ in \eqref{ad}, equivalently
\be\label{db} \tilde z (\tau; z, x) = z(x)\ee
in \eqref{ag}.  Elements $\cale_2 \in \cals'_R$  must then be such that \eqref{am} holds.

For $\cale_1 = \cale_T$ in \eqref{ai}, $\Sigma'$ determining solutions of \eqref{aa}, \eqref{ab} stationary with respect to $\mu \cdot x$, satisfying
\be\label{dc} z_x(x) \mu = 0\ee
 almost everywhere in $\Om$,
the condition \eqref{am} requires $\cale_2$ such that $x^*(\cdot ) $ is independent of $\mu\cdot x$,
equivalently
\be\label{de} X_2 \mu = 0\ee
using \eqref{ba}.

For $\cale_1 \in \cals_R$, using \eqref{ba}, the condition \eqref{am} becomes
\be\label{dg} X_1 X_2 = X_2 X_1, \; \; Z_1 Z_2 = Z_2 Z_1,\; \;  Z_2 \om_1 + \om_2
 = Z_1 \om_2 + \om_1.\ee

Such applies also to $\cale_1 = \cale_S$ in \eqref{aj}, identifying $X_1 = I_m, Z_1 = 0, \om_1 =\om_2 = 0$. Then \eqref{dg} holds trivially.
Self-similar solutions $z \big(\frac{x}{e \cdot x}\big), \; \,  
e \in \bbr^m$ in \eq{aa}, \eq{ab}, satisfy \eq{da} with $(e\cdot x)^* = 0$ and $x^*$ independent of $e\cdot x$.  Such solutions thus inherit elements of $\cals_R$ with
\be\label{dga} Xe = X^\dag e = 0.\ee

The primitive system may also be simplified by fixing some component of $z$, setting
\be\label{dh} e \cdot z = c_e, \; \; c_e \in \hat I\ee
for some $e \in \bbs^n$ and some open interval $\hat I \subseteq \bbr$,  dropping the component $\Sigma (z) e$ from \eqref{aa}, retaining a system of dimension $n-1$. The nontrivial symmetry group for the resulting system must then be such that
\be\label{di} (e\cdot z)^* = 0.\ee

If
\be\label{dha} c_e \neq 0\ee
in \eqref{dh}, then using \eqref{ba}, the condition \eqref{di} requires, for any $\cale \in \cals'_{R}$,
\be\label{dj} Z^\dag e = 0, \ee
\be\label{djb} e \cdot \om = 0.\ee

The primitive system may also  be modified by conserving entropy,
exchanging the entropy equality \eqref{bo} with one of the constituent equations, say $\Si_l (z)$, for some $l = 1, \dots$, minimum $(m, n)$.

For $c_e$ satisfying \eq{dha},  $D $ restricted  by
\be\label{dka} z_l \neq 0, \; \;  \sgn (z_l) = \sgn (c_e), \; \; \, z \in D,\ee
the image
\be\label{dja} \tilde \Si (\tilde z ) \, \eqadef\, \calt_{q, l, c_e} \Si(z)\ee
satisfies
\begin{align}\label{djz} \tilde a_{ij} (\tilde z) & = a_{ij} (z), \; \; \, i = 1, \cdots, m, \; \; j = 1, \cdots, n; \; \; i, j \neq l;\nonumber \\
\tilde a_{i_l} (\tilde z) & = - c_e q_i (a_{i\cdot} (z)), \; \; \, i = 1, \cdots, m, \; \; i \neq l\nonumber \\
\tilde q_i (\tilde a_{i\cdot} (\tilde z)) & = - \frac{1}{c_e} a_{i_l}, \; \; \, i = 1, \cdots, m, \; \; \,  i \neq l.\end{align}

Such corresponds to exchange of dependent variable
\be\label{dkb} \tilde z_j =  c_e \,\frac{z_j}{z_l}, \; \; j = 1,\dots, n, \; \; j \neq l\ee
\be\label{dkc} \tilde z_l =
 \frac{c^2_e}{z_l},\ee
and potential functions in \eqref{bf} for $\tilde \Si$,
\begin{align}\label{dkd} &\tilde \psi_i (\tilde z) = \frac{c_e}{z_l} \, \psi_i (z), \; \; \, i = 1,\cdots, m, \; \; i \neq l;\nonumber \\
&\tilde \psi_l (\tilde z) = - \frac{1}{z_l} \, \suml^n_{j = 1} z_j \psi_j (z).\end{align}

The remaining density/flux functions for $\tilde \Si (\tilde z)$ are obtained from \eq{dkd}, using \eq{bt},
\begin{align}\label{djy}
\tilde a_{lj} (\tilde z) & = - \frac{1}{z_l} \big( \suml^n_{k = 1} z_k a_{kj} (z) + \psi_j (z)\big), \; \; j = 1, \cdots, n, \; \; j \neq l;\nonumber \\
\tilde a_{ll} (\tilde z) &=  - \frac{1}{z_l} \suml^n_{k = 1} z_k a_{kl} (z) + \frac{1}{z^2_l}  \suml^n_{{k = 1, k \neq l}}  z_k \psi_k(z).\end{align}

A necessary condition for \eqref{dka} is $z^*_l $ vanishing identically.  Such requires each $(Z,\om)$ in \eqref{ba} satisfying
\be\label{dke} Z_{l j} = 0, \; \; j = 1, \dots, n,\; \om_l = 0.\ee

Sufficient conditions for \eq{bu} remain to be determined.

Alternative modifications of a primitive system, including extension to higher dimensions, coupling of multiple systems, and adding dissipation, require more extensive  discussion  and are not treated in this generality.

\section{A special class of systems}

Using \eqref{bb} with $g = \psi$ and \eqref{bt}, the condition  \eqref{bu} becomes
\be\label{bua} a(z)(Zz + \om) = (X + c_Z I_m) \psi (z),\ee
linearly relating $X,Z,\om,  c_Z$ for given $z, a(z), \psi(z)$. The relationship is dramatically simplified for a special class of systems \eqref{ab},
satisfying an additional structural condition.

For these systems
\be\label{ca} m = n,  \ee
and  the following apply.

\begin{thm} For a system satisfying \eqref{ca}, \eq{bta},  existence of a nonvanishing scalar function
$\xi\in C^2 (D \to \bbr_+)$ such that $\hat  a(z)$ is a symmetric $n$-matrix function of $z$, with
\be\label{cb} \hat  a_{ij} (z) \, \eqadef \, a_{ij} (z) - \frac{\psi_i (z) \xi_{z_j} (z)}{\xi(z)}\ee
is equivalent to the existence of scalar functions
\be\label{ccz} \xi \in C^2 (D \to \bbr), \; \; \zeta \in C^3 (D \to \bbr),\ee
$\xi$ nonvanishing, such that
\be\label{cc} \psi(z) = \xi(z) \zeta_z(z)^\dag.\ee
\end{thm}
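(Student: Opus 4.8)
The plan is to reduce both halves of the asserted equivalence to a single integrability (closedness) condition on an auxiliary vector field, after which one direction follows from the Poincaré lemma and the other from equality of mixed partials. The pivotal observation is that, because $a = \psi_z$ by \eqref{bt}, the matrix $\hat a$ defined in \eqref{cb} is itself a Jacobian. Concretely, for a nonvanishing $\xi$ set $\phi_i \eqadef \psi_i/\xi$; the quotient rule together with \eqref{bt} gives
\[
\phi_{i, z_j} = \frac{\psi_{i, z_j}}{\xi} - \frac{\psi_i\,\xi_{z_j}}{\xi^2} = \frac{1}{\xi}\Big(a_{ij} - \frac{\psi_i\,\xi_{z_j}}{\xi}\Big) = \frac{\hat a_{ij}}{\xi},
\]
so that $\hat a_{ij} = \xi\,\phi_{i, z_j}$. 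Since $\xi$ never vanishes, symmetry of $\hat a$ is therefore \emph{exactly} equivalent to symmetry of the Jacobian $(\phi_{i, z_j})$, i.e.\ to the condition $\phi_{i, z_j} = \phi_{j, z_i}$ for all $i, j$; this is precisely closedness of the one-form $\sum_i \phi_i\, dz_i$.

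For the direction from \eqref{cb} to \eqref{cc}, I would take the $\xi$ furnished by the hypothesis, note that symmetry of $\hat a$ yields the closedness just described, and invoke the Poincaré lemma to produce a scalar potential $\zeta$ with $\zeta_{z_i} = \phi_i = \psi_i/\xi$, equivalently $\psi = \xi\,\zeta_z^\dag$, which is \eqref{cc}. The regularity bookkeeping is immediate: $\psi \in C^2$ by \eqref{bta} and $\xi \in C^2$ nonvanishing give $\phi \in C^2$, so its antiderivative $\zeta$ lies in $C^3$, matching \eqref{ccz}.

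The converse is essentially free from the same identity. Assuming \eqref{cc}, we have $\phi_i = \psi_i/\xi = \zeta_{z_i}$, whence $\phi_{i, z_j} = \zeta_{z_i z_j}$ is symmetric in $(i,j)$ by equality of mixed partials (legitimate since $\zeta \in C^3$ forces $\zeta_z \in C^2$), and $\hat a_{ij} = \xi\,\phi_{i, z_j}$ then shows $\hat a$ symmetric. To reconcile the sign convention $\xi \in \bbr_+$ of the statement with the weaker ``nonvanishing'' hypothesis of \eqref{cc}, I would replace $(\xi, \zeta)$ by $(-\xi, -\zeta)$ on any component of $D$ where $\xi < 0$; this leaves $\psi = \xi\,\zeta_z^\dag$ and hence $\hat a$ unchanged, and is of the same harmless scaling type already declared unimportant above.

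The single genuine subtlety — and the step I expect to require the most care — is the passage from closedness to exactness in the first direction, which relies on the Poincaré lemma and therefore on a topological hypothesis on the domain. Since \eqref{ac} posits only that $D$ is open, I would either read the equivalence locally in $z$ or adopt a standing assumption that $D$ is simply connected (noting that $\xi$ continuous and nonvanishing on a connected $D$ is automatically of one sign, which also streamlines the sign adjustment above). Everything else in the argument is forced by \eqref{bt} and the quotient-rule identity, with no further computation needed.
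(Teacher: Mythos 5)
Your proposal is correct and follows essentially the same route as the paper: both hinge on the identity $\hat a(z)/\xi(z) = (\psi(z)/\xi(z))_z$ (the paper's \eqref{ce}), obtain $\zeta$ in the forward direction by integrating this symmetric Jacobian, and verify the converse by the product-rule computation \eqref{cg}. Your explicit attention to the Poincar\'e-lemma hypothesis on $D$ and to reconciling $\xi \in \bbr_+$ with mere nonvanishing are points the paper passes over silently, but they do not change the argument.
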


\begin{rem*}  Such $\xi, \zeta$ are not unique, as \eqref{cc} holds for any $\underline{\xi}, \underline{\zeta}$
\be\label{cd} \underline{\zeta}(z) = f(\zeta(z)), \; \, \underline{\xi}(z) = \xi(z) / f_\zeta  (\zeta(z)),\ee
with nonvanishing $f_\zeta (\cdot)$.  Additionally, there exists an arbitrary additive constant in $\zeta(z)$.

Nonvanishing $\xi(z)$ implicitly restricts $D$ for a given system.

\end{rem*}
\begin{proof} Assuming \eqref{cb}, $\zeta(z)$ is determined from the symmetric $n$-matrix function
\begin{align}\label{ce} \frac{\hat  a (z)}{\xi(z)} &= \Big( \frac{\psi(z)}{\xi(z)} \Big)_z\nonumber \\
&\eqadef \zeta_{zz} (z),\end{align}
using \eqref{bt}, determining $\zeta (z)$ up to an affine function of $z$.  The condition  \eqref{cc} follows from the choice
\be\label{cf} \zeta^\dag_z (0) = \frac{\psi(0)}{\xi(0)};\ee
an additive constant in $\zeta$ is immaterial in \eqref{cc}.

Assuming \eqref{cc}, differentiation and use of \eqref{bt}, \eqref{cc} gives
\begin{align}\label{cg}  a(z) &=(\xi(z) \zeta_z (z)^\dag)_z\nonumber\\
&= \xi (z) \zeta_{zz} (z) + \zeta_z (z)^\dag \xi_z (z)\nonumber \\
 &=\xi(z)\zeta_{zz}(z) + \frac{\psi(z)\xi_z(z)}{\xi(z)},\end{align}
from which \eqref{cb} follows.\end{proof}


 Regularity \eqref{ccz} is assumed throughout, thus satisfying \eqref{bta} using \eqref{cc}.

The condition \eq{cc} is  one form of extension of a given system \eq{aa}, \eq{ab}, \eq{ac} with
\be\label{cia} m = 2, n > 2\ee
to a system satisfying \eq{ca}. A restriction of \eq{cc},
\be\label{cib} \psi_i (z) = \xi(z) \zeta_{z_i} (z), \; \; \, i = 1, 2, \ee
is obtained from solution of
\be\label{cic} \Big( \frac{\psi_1(z)}{\xi(z)}\Big)_{z_2} \; = \; \Big(\frac{\psi_2(z)}{\xi(z)}\Big)_{z_1}\ee
pointwise with respect to $z_3, \cdots, z_n$, restricting $(z_1, z_2) \in \bbr^2$ as required.  Then for suitable $D \subseteq \bbr^n, \; \; \psi: D \to \bbr^n$ is obtained satisfying \eq{cc}.  The original system satisfying \eq{cia} is thus the reduced form of the system so extended, stationary with respect to $x_3, \cdots, x_n$.

The extension is not unique, as follows from lack of uniqueness in solution of \eq{cic}.

 \begin{thm} Assume a system satisfying \eq{ca}, \eq{cc}, $Z \in M^{n\times n}, \om \in \bbr^n$  such that using \eqref{ba}, \eq{bb}, for some constants $c_Z, c_{\xi,Z}, c_{\zeta,Z}$, independent of $z$,
 \begin{align}\label{cj} \zeta^* (z) &= \zeta_z (z) (Zz+ \om)\nonumber \\
 &=(c_Z - c_{\xi, Z}) \zeta(z) + c_{\zeta, Z}.\end{align}

 Then a necessary and sufficient condition  for \eq{bu} with
 \be\label{ch} X = - Z^\dag\ee
 is that, independently of $z$,
 \begin{align}\label{ci} \xi^*(z) &= \xi_z(z)(Zz+\om) \nonumber \\
 &=c_{\xi, Z} \xi(z).\end{align}

\end{thm}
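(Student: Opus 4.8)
The plan is to exploit the fact that the $*$-operation $g\mapsto g^*=g_z(Zz+\om)$ is nothing but the directional derivative of $g$ along the vector field $z^*(z)=Zz+\om$, and hence obeys the Leibniz product rule. Writing the structural hypothesis \eqref{cc} componentwise as $\psi_i=\xi\,\zeta_{z_i}$, I would expand
\[ \psi_i^*=\xi^*\,\zeta_{z_i}+\xi\,(\zeta_{z_i})^*, \]
reducing the whole computation to the two scalar objects $\xi$ and $\zeta$ together with their $*$-images.

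The next step is to evaluate $(\zeta_z)^*$. Applying \eqref{bc} to the scalar $g=\zeta$ gives $(\zeta_z)^*=(\zeta^*)_z-\zeta_z Z$; substituting the hypothesis \eqref{cj}, the additive constant $c_{\zeta,Z}$ is killed by the $z$-differentiation, leaving $(\zeta_z)^*=(c_Z-c_{\xi,Z})\zeta_z-\zeta_z Z$. Feeding this back into the Leibniz expansion and re-inserting $\xi\,\zeta_{z_i}=\psi_i$ everywhere, I expect to arrive at
\[ \psi^*=\Big(\frac{\xi^*}{\xi}-c_{\xi,Z}\Big)\psi+\big(c_Z I_n-Z^\dag\big)\psi, \]
the transposed matrix $Z^\dag$ appearing because the index contraction in $\zeta_z Z$ pairs $\psi_k$ with $Z_{ki}$.

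Finally I would compare this with the target \eqref{bu}. Under the choice $X=-Z^\dag$ of \eqref{ch}, the second term is exactly $(X+c_Z I_n)\psi$, so \eqref{bu} holds if and only if the residual first term vanishes, i.e. $(\xi^*-c_{\xi,Z}\xi)\,\zeta_z^\dag\equiv 0$ after clearing the nonvanishing factor $\xi$. If \eqref{ci} holds this residual is identically zero, giving \eqref{bu}; conversely, since $\psi=\xi\zeta_z^\dag\not\equiv 0$ for a nondegenerate system, the scalar factor $\xi^*/\xi-c_{\xi,Z}$ must vanish wherever $\zeta_z\neq 0$, hence everywhere by continuity, which is precisely \eqref{ci}. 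The computation is essentially routine; the one place to be careful is the index bookkeeping that makes the matrix term assemble into $Z^\dag$ so that it cancels cleanly against $X=-Z^\dag$, and, in the necessity direction, the exclusion of the degenerate case $\psi\equiv 0$ (equivalently $\zeta$ constant) needed to cancel the common factor $\psi$ and isolate \eqref{ci}.
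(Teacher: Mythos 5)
Your proposal is correct and follows essentially the same route as the paper: expand $\psi^*=\xi^*\zeta_z^\dag+\xi(\zeta_z^\dag)^*$ by the product rule, evaluate $(\zeta_z)^*$ via \eqref{bc} and \eqref{cj}, and compare the resulting expression with \eqref{bu} under $X=-Z^\dag$. Your explicit handling of the necessity direction (cancelling the common factor $\psi$ using nondegeneracy of $\zeta_z$ and continuity) is slightly more careful than the paper's one-line "comparison," but the argument is the same.
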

\begin{proof} From \eq{cc}, using \eq{bc}, \eq{cj}
\begin{align}\label{cl} \psi^*(z) &= \xi^*(z) \zeta^\dag_z(z) + \xi(z) \, (\zeta^\dag_z(z))^*\nonumber \\
&= \xi^* (z) \zeta^\dag_z(z) + \xi(z) ((\zeta^*(z)_z)^\dag - Z^\dag \zeta_z(z)^\dag )\nonumber \\
&= \Big( \big( \frac{\xi^*(z)}{\xi(z)} - c_{\xi, Z} \big) I_n - Z^\dag\Big) \psi(z).\end{align}

The conclusion follows by comparison of \eq{cl} with \eq{bu}.\end{proof}

Except where stated otherwise, for systems $\Si(z)$ satisfying \eq{cc},
attention is restricted to systems for which $\zeta(z)$ in \eq{cc} is such that there is no  nonzero $ e_\perp \in \bbr^n$
\be\label{cga} \zeta_z(y)  e_\perp = 0 \ee
for all $y \in D$.


Both conditions \eqref{ca}, \eqref{cb} admit qualified extension.

\begin{cor} Assume \eqref{ca} and the potential function $\psi(z)$ satisfying \eqref{bt} of the form
\be\label{ct} \psi(z) = \suml^\calk_{\k = 1} \xi_\k (z) \zeta_{\k, z} (z)^\dag\ee
for some positive integer $\calk \ge 2$.

Then \eqref{bh}, \eqref{ch} hold for any $Z,\om$ such that \eqref{ci}, \eqref{cj} hold for each $\xi_{\k}, \zeta_{\k}$ with $c_Z   $ independent of $\k$.
\end{cor}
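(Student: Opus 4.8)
The plan is to reduce the claim to the single-term Theorem (the one identifying \eqref{ci} as the condition for \eqref{bu} under the choice \eqref{ch}) by treating the representation \eqref{ct} summand by summand. The key structural fact I would record at the outset is that the starred operation on functions of $z$, namely $g\mapsto g^* = g_z(Zz+\om)$ in \eqref{bb}, is a linear first-order derivation: it is additive in $g$ and obeys the product rule, and the derivative identities \eqref{bc} apply to each factor. Writing $\psi^{(\k)}(z)\eqadef \xi_\k(z)\zeta_{\k,z}(z)^\dag$ for the $\k$-th summand, additivity gives $\psi^* = \suml^\calk_{\k=1}(\psi^{(\k)})^*$, so it suffices to compute each $(\psi^{(\k)})^*$ and verify that every summand produces the \emph{same} matrix factor. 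Since $m=n$ by \eqref{ca}, the intended choice $X=-Z^\dag$ (that is, \eqref{ch}) is dimensionally consistent, $X\in M^{m\times m}$, $Z\in M^{n\times n}$.

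For a fixed $\k$ I would repeat verbatim the computation \eqref{cl}, now with $\xi_\k,\zeta_\k$ in place of $\xi,\zeta$. The product rule together with \eqref{bc} yields
\begin{align*}
(\psi^{(\k)})^* &= \xi_\k^*\,\zeta_{\k,z}^\dag + \xi_\k\big(\big((\zeta_\k^*)_z\big)^\dag - Z^\dag\zeta_{\k,z}^\dag\big).
\end{align*}
Into this I substitute the two hypotheses for this particular $\k$: from \eqref{cj} one has $(\zeta_\k^*)_z = (c_Z - c_{\xi_\k,Z})\zeta_{\k,z}$, the additive constant $c_{\zeta_\k,Z}$ vanishing under differentiation, and from \eqref{ci} one has $\xi_\k^* = c_{\xi_\k,Z}\xi_\k$. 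Using $\xi_\k\zeta_{\k,z}^\dag = \psi^{(\k)}$, the two occurrences of $c_{\xi_\k,Z}$ cancel and one is left with
\begin{align*}
(\psi^{(\k)})^* &= \big(c_Z I_n - Z^\dag\big)\psi^{(\k)},
\end{align*}
in which the matrix factor carries no residual dependence on $\k$.

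Summing over $\k$ and using $\suml_\k \psi^{(\k)} = \psi$ then gives $\psi^* = (c_Z I_n - Z^\dag)\psi$, which is precisely \eqref{bu} with $X=-Z^\dag$, i.e. \eqref{ch}. The preceding Theorem ($\eqref{bu}\Rightarrow\eqref{bg}$) and then the opening Lemma ($\eqref{bg}\Rightarrow\cale\in\cals_R$ with $N=Z-c_Z I_n$) deliver the conclusion \eqref{bh}, closing the argument.

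The only genuine subtlety, and the step I would take most care over, is the bookkeeping of the constants. Each summand is permitted its own pair $c_{\xi_\k,Z}, c_{\zeta_\k,Z}$, and the argument succeeds only because these wash out \emph{within} each term: $c_{\zeta_\k,Z}$ is annihilated by $(\cdot)_z$, while $c_{\xi_\k,Z}$ cancels against the matching contribution supplied by \eqref{cj}. What cannot be allowed to vary is $c_Z$, the single scalar surviving in the common factor $c_Z I_n - Z^\dag$; were it to differ between summands, this factor could not be pulled out of the sum and the identity \eqref{bu} would fail. This is exactly the stated hypothesis that $c_Z$ be independent of $\k$, so I would flag it as the one essential uniformity requirement and would justify the splitting of the sum by verifying the derivation property of $*$ once, at the beginning.
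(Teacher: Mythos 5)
Your argument is correct and is exactly the intended one: the paper gives no separate proof of this corollary, treating it as immediate from applying the computation \eqref{cl} of the preceding theorem to each summand $\xi_\k\zeta_{\k,z}^\dag$ and summing, which is precisely what you do. Your explicit flagging of the termwise cancellation of $c_{\xi_\k,Z}$ versus the required uniformity of $c_Z$ is the right point to emphasize.
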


Nonvanishing $\xi_{\k} $ is not required here.

In view of \eqref{ch}, \eqref{bu}, a necessary condition that a given system \eqref{ab} can be extended to satisfy corollary 2.3 is that for all $X, Z, \om $ in \eqref{ba} for the given system, there exists a constant $c_{XZ}$ such that
\be\label{cta} X_{ij} = - Z_{ji} + c_{XZ}, \; \; i, j = 1, \cdots, \hbox{\, minimum \ } (\hat m, \hat n)\ee
with $\hat m, \hat n$ the values of $m, n$ in \eqref{aa}, \eqref{ab}, \eqref{ac} for the given system, which need not be equal.

For such systems, satisfying one of the conditions \eq{cc}, \eq{ct},  with \eqref{ch}, \eqref{ci}, \eqref{cj} determining $X, c_Z $ from $Z, \om $, the nontrivial symmetry group $\cals_R$ is isomorphic to the set $\{(Z, \om) \}$.  Below we shall use these terms interchangeably and refer to such systems as $Z$-systems.

\begin{thm} For a $Z$-system, $\cals_R$ and $\{(Z, \om) \}$ are linear vector spaces.
\end{thm}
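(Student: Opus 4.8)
The plan is to reduce the statement to the observation that, for a $Z$-system, the entire symmetry element $\cale$ is an explicit \emph{linear} function of the pair $(Z,\om)$, so that it suffices to prove the admissible set $\{(Z,\om)\}$ is closed under linear combinations. Indeed, by \eqref{ch} the assignment $(Z,\om)\mapsto X=-Z^\dag$ is linear, and by \eqref{bh} one has $N=Z-c_ZI_n$; hence once $c_Z$ is exhibited as a linear functional of $(Z,\om)$, the correspondence $(Z,\om)\mapsto\cale=(x^*=-Z^\dag x,\; z^*=Zz+\om,\; N=Z-c_ZI_n)$ respects componentwise addition and scaling, i.e. it carries the formal operation \eqref{al} to ordinary addition on $\{(Z,\om)\}$. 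Consequently $\cals_R$ and $\{(Z,\om)\}$ will be linearly isomorphic, and it is enough to treat the latter.

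First I would check that the defining conditions \eqref{ci}, \eqref{cj} are linear in $(Z,\om)$, together with well-definedness of the associated constants. Since $\xi$ is nonvanishing, \eqref{ci} forces $c_{\xi,Z}=\xi_z(z)(Zz+\om)/\xi(z)$, which must be constant in $z$ and is thereby uniquely determined; and since under the standing hypothesis \eqref{cga} the functions $\zeta$ and $1$ are linearly independent, the decomposition in \eqref{cj} determines $c_Z-c_{\xi,Z}$ and $c_{\zeta,Z}$ uniquely, hence $c_Z$ uniquely. The left-hand sides of \eqref{ci}, \eqref{cj} are manifestly linear in $(Z,\om)$, so substituting $\alpha(Z_1,\om_1)+\beta(Z_2,\om_2)$ and splitting the two contributions shows \eqref{ci}, \eqref{cj} again hold, with constants $c_{\xi}=\alpha c_{\xi,Z_1}+\beta c_{\xi,Z_2}$, $c_{\zeta}=\alpha c_{\zeta,Z_1}+\beta c_{\zeta,Z_2}$, and, on comparing the coefficients of $\zeta(z)$, $c_Z=\alpha c_{Z_1}+\beta c_{Z_2}$. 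This proves $\{(Z,\om)\}$ is a linear vector space and that $c_Z$ is a linear functional of $(Z,\om)$, completing the reduction of the first paragraph. For the extension \eqref{ct} of Corollary 2.3 I would rerun this per index $\k$; the extra demand that $c_Z$ be independent of $\k$ is a family of linear equalities $c_Z^{(\k)}=c_Z^{(\k')}$ in $(Z,\om)$, and so is inherited by linear combinations.

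The step I expect to require the most care is reconciling closure under the formal sum \eqref{al} with the general caveat \eqref{ak}, \eqref{am}, by which $\cale_1+\cale_2\in\cals$ ordinarily demands that the one-parameter flows $\calt_1,\calt_2$ commute. The point to stress is that for a $Z$-system this composition route is bypassed: membership of the sum in $\cals_R$ is obtained directly, since by the previous paragraph $\alpha(Z_1,\om_1)+\beta(Z_2,\om_2)$ still satisfies \eqref{ci}, \eqref{cj}, whence by Theorem 2.2 it satisfies the sufficient condition \eqref{bu} with $X=-Z^\dag$, and therefore \eqref{bg} and \eqref{ae} hold for the sum with no commutativity hypothesis at all. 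An essentially equivalent route would invoke the Remark following the Lemma, which already records that the solution set of the linear relations \eqref{ba}, \eqref{bg} is a vector space; the role of the $Z$-system hypothesis is then only to pin the coordinates of that space down to $(Z,\om)$ through \eqref{ch}, \eqref{bh}, \eqref{ci}, \eqref{cj}. I would close by noting that the convention excluding the trivial symmetries (those with $x^*$ vanishing and $z^*$ constant, i.e. $Z=0$) removes only a subspace, so the linear-space assertion is to be read with that subspace adjoined.
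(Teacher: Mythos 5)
Your proposal is correct and follows essentially the same route as the paper: the proof there simply computes $\xi^*$ and $\zeta^*$ for $(Z_1+Z_2,\om_1+\om_2)$ and observes that \eqref{ci}, \eqref{cj} again hold with the constants $c_Z$, $c_{\zeta,Z}$ adding, the commutativity condition \eqref{am} being explicitly waived in the accompanying remark just as you argue via Theorem 2.2. Your added checks on well-definedness of the constants and the caveat about the excluded trivial subspace are sensible elaborations of points the paper leaves implicit, but do not change the argument.
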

\begin{rem*} For $\cale_1, \cale_2 \in \cals_R$, the condition \eqref{am} or \eqref{dg}, is not required.
\end{rem*}
\begin{proof} For the case \eq{cc}, given  $(Z_1, \om_1), (Z_2, \om_2)  \in \{ (Z,\om)\}$, from \eqref{ci}
\begin{align}\label{cr} \xi^* \, &\, = \, \xi_z (( Z_1 + Z_2) z+\om_1 +\om_2)\nonumber \\
&=(c_{\xi, Z_1} + c_{\xi, Z_2})\xi,\end{align}
and from \eqref{cj}
\begin{align}\label{cs} \zeta^* \, & =\,
\zeta_z ((Z_1 + Z_2)z  +\om_1 +\om_2)\nonumber \\
&=(c_{Z_1} - c_{\xi, Z_1}) \zeta + c_{\zeta, Z_1} + (c_{Z_2} - c_{\xi, Z_2}) \zeta + c_{\zeta, Z_2}.\end{align}

From \eqref{cr}, \eqref{cs}, the conditions \eqref{ci}, \eqref{cj} hold for $Z_1 + Z_2$,
 \be\label{cm} ( Z_1 + Z_2, \om_1 + \om_2)  \in \{( Z,\om) \}\ee
 with
 \be\label{cn} c_{Z_1 + Z_2} = c_{Z_1} + c_{Z_2}\ee
 \be\label{co} c_{\zeta, Z_1 +Z_2} = c_{\zeta, Z_1} + c_{\zeta, Z_2}.\ee

 An entirely analogous argument applies to systems satisfying \eq{ct}.
  \end{proof}

 From \eqref{cn}, the condition \eqref{bh} holds with
 \be\label{cp} N_{Z_1 + Z_2} = N_{Z_1} + N_{Z_2}.\ee

We introduce subspaces
\be\label{cx}\{ (Z, \om)\} = \{(Z, \om)\}_0 \oplus \{ ( Z, \om)\}_\zeta \oplus \{ (Z, \om)\}_c\ee
  such that for $(Z,\om) \in \{ (Z, \om)\}_0$,
 \be\label{cu} \xi^* =  0,\; \; \zeta^* = 0, \; \; c_Z = c_{\xi, Z}  = c_{\zeta, Z} = 0\ee
 in \eqref{cj}, \eqref{ci}.
By convention
\be\label{mca} (I_n, 0) \notin \{ ( Z, \om)\}_0.\ee

A quotient space
\be\label{cxa} \{ (Z_\zeta, \om_\zeta)\}_\zeta \; \eqadef \; \{ (Z, \om) \, | \, c_Z = c_{\xi, Z} = 0 \} / \{ (Z,  \om)\}_0 \ee
is of dimension at most one, determined from an element $(Z_\zeta, \om_\zeta)$ for which
\be\label{cxb} \zeta_z (y) (Z_\zeta y + \om_\zeta) > 0\ee
for all $y \in D$.

Any quotient space
\be\label{cxc} \{ (Z, \om)\}_c \; \eqadef \; \{ ( Z, \om)\} / (\{ (Z, \om)\}_0 \oplus \{ (Z,\om)\}_\zeta\ee
is of dimension not exceeding two, determined from $(Z,\om)$ for which
\be\label{cxd} c_Z = c_{\xi, Z} = 1\ee
or
\be\label{cxe} c_Z = 1, c_{\xi, Z} = 0. \ee

  In examples below,
   $\{ Z, \om \}_c$ will be associated with scaling of $z$, with $Z$ diagonal, $\om$ vanishing. The set  $\{( Z, \om)\}_c$  may be empty depending on the nonlinearity of the given system, and will be largely ignored.

\subsection{Transformations on $Z$-systems}

Transformations on $Z$-systems determine related $Z$-systems, as described in section 1.2.

A (real-valued) transformation
\be\label{cjz} \calt_{QU} \Si(z)\, \eqadef \, \Si^{QU} (z^Q)\ee
as obtained from \eqref{an}, \eq{anb}, determines a $Z$-system with
\be\label{cja} \xi^Q(z^Q) = \xi(z), \; \; \, \zeta^Q(z^Q) = \zeta(z)\ee
if and only if $Q,U$ are related by
\be\label{cjb} U = Q^{-\dag}.\ee

The corresponding elements of $\{ ( Z,\om)\}_0$ are obtained from \eq{ar}; the values of $c_Z, c_{\xi, Z},\;  c_{\zeta, Z}$ in \eq{cj}, \eq{ci} are unchanged.

A transformation $\calt_{ e, c_e}  $ results from application of \eq{dc}, \eq{dh} simultaneously to a $Z$-system $\Si(z)$, of dimension $n$, with
\be\label{cjd} \mu = e, \; \; c_e \in \bbr.\ee

In particular,  the components $e'\cdot z, \; \; e' \perp e$, are unaffected.

The image $\calt_{e, c_e} \Si(z)$, of dimension $n-1$, is thus the reduced form of $\Si(z)$ stationary with respect to $e\cdot x$, and simplified by application of \eqref{dh} in the expressions for $\zeta(z), \xi(z)$, including whatever expression for $z_\zeta (z)$ in \eq{me} below.

Employing a transforation $\calt_{QU}$, without loss of generality by convention below
\be\label{cje} e = \hat e_n\ee
in \eq{djb}, \eq{dj}, \eq{cjd}.  Here and throughout we denote unit vectors in phase space or in $\Omega$ by $\hat e_j$, $ j = 1, \cdots$.
With the convention \eq{cje}, the image
\be\label{uaa} \Si' (z') \, \eqadef \, \calt_{\hat e_{n}, c_e} \Si(z), \; \; \;  z' \, \eqadef \, \begin{pmatrix} z_1\\ \vdots\\ z_{n-1}\end{pmatrix} \ee
is determined with
\be\label{uab} \zeta' (z') = \zeta{z'\choose c_e},  \; \; \, \xi' (z') = \xi {z'\choose c_e}.\ee

Then for all $Q, U$ satisfying \eq{cjb} and
\be\label{uac} Q_{nj} = U_{jn} = \delta_{j,n} , \; \; \; j = 1, \cdots, n\ee
employing the Kroneker delta, there exists $Q', \;  U'$ of dimension $n-1$ satisfying
\begin{align}\label{uad} \calt_{\hat e_n, c_e} \calt_{QU}\; \Si (z) &= \calt_{Q'U'} \, \calt_{\hat e_n, c_e} \Si(z) \nonumber \\
&= \calt_{Q'U'} \, \Si'(z');\end{align}
\be\label{uae} Q'_{ij} = Q_{ij}, \, U'_{ij} = U_{ij}, \; \; \, i, j = 1, \cdots, n-1,\ee
independent of $c_e$.

  Then the elements of $\{ ( \hat Z, \hat \om)\}_0$ for the image $\calt_{e, c_e}\Si(z)$ are obtained from those of $\{ (Z, \om)\}_0$ for $\Si(z)$ satisfying
\be\label{cjf} Z_{nj} = 0, \; \; \, j = 1, \dots, n, \; \; \om_n = 0,\ee
by
\be\label{cjg} \hat Z_{ij} = Z_{ij}, \; \; \, i, j = 1, \dots, n-1\ee
\be\label{cjh} \hat \om_i = \om_i + Z_{in} c_e, \; \; \, i = 1,\cdots, n-1,\ee
using the constant $c_e$ in \eqref{dh}.

For transformations \eq{dja} applied to $Z$-systems $\Si(z)$, the conditions \eq{dkd} are obtained with
\be\label{cjc} \tilde \xi (\tilde z) = \frac{c^2_e}{z^2_l} \xi(z), \; \; \, \tilde \zeta (\tilde z) = \zeta(z).\ee

Assuming \eq{dka}, using \eq{dkb}, \eq{dkc}, \eq{cjc}, transformations $\calt_{q, l, c_e} $ are reversible
\be\label{ube} \calt^{-1}_{q,l,c_e} = \calt_{q,l,c_e}.\ee

The symmetry groups $\{ (Z,\om)\}_0 $ for $\Si(z)$ and $\{(\tilde Z, \tilde \om)\}_0$ for $\tilde \Si(\tilde z)$, related by \eq{dja}, are related using \eq{dkb}, \eq{dkc}, \eq{cjc} and a condition implied by \eq{dka}, \eq{dkc},
\be\label{ubc} z^*_l = 0 = \tilde z^*_l.\ee

From \eq{cj}, \eq{cu}, \eq{ba}, for $\tilde \Si (\tilde z)$, using \eq{ubc},
\be\label{ubi} (\tilde \zeta (\tilde z))^* = \suml_{j \neq l} \tilde \zeta_{\tilde z_j} (\tilde z) \Big(\suml^n_{k = 1} \tilde Z_{jk} \tilde z_k + \tilde \om_j\Big).\ee

From \eq{cjc}, using \eq{dkb}, \eq{dkc},
\be\label{uba} \tilde \zeta_{\tilde z_j} (\tilde z) = \zeta_{z_j} (z) \frac{z_l}{c_e} = \zeta_{z_j} (z) \frac{c_e}{\tilde z_l}, \; \; \, j \neq l.\ee

Thus from \eq{ubi}, \eq{uba}, \eq{ubc}, for any $(Z, \om) \in \{ (Z, \om)\}_0$ satisfying \eq{dke} and
\be\label{ubb} Z_{jl}= 0, \; \; \, j = 1, \cdots, n,\ee
there exists $(\tilde Z, \tilde \om) \in \{ ( \tilde Z, \tilde \om)\}_0$ satisfying \eq{dke} and
\be\label{ubh} \tilde Z_{jk} = Z_{jk}, \; \; \, j, k = 1, \cdots, n, \; \; k \neq l;\ee
\be\label{ubd} \tilde Z_{jl} = \frac{\om_j}{c_e}, \; \; \, \tilde \om_j = 0, \; \; j = 1, \cdots, n.\ee

From reversibility \eq{ube}, any $(Z, \om)$ satisfying \eq{dke} and
\be\label{ubf} \om_j = 0, \; \; j = 1,\cdots, n\ee
determines $(\tilde Z, \tilde \om)$ satisfying \eq{dke}, \eq{ubh} and
\be\label{ubg} \tilde Z_{jl} = 0, \; \, \tilde \om_j = c_e Z_{jl}, \; \; j = 1, \cdots, n.\ee

\subsection{Examples of $Z$-systems}

$Z$-systems satisfying \eq{ca}, \eq{cc} may be readily constructed with certain prescribed nontrivial $\cals_R$.

\begin{thm} For given
\be\label{hba} W \in M^{\ntn}, \; \; Y \in \bbr^n, \; \;W Y= 0,     \;  W \hbox{\ symmetric} ,\ee
there exists a $Z$-system $\Sigma (z)$ satisfying \eq{ca}, \eq{cc} with nontrivial $\{ (Z,\om)\}_0$ such that using \eqref{ba}, for all
$(Z,\om) \in \{ (Z, \om)\}_0$
\be\label{hb} (Y\cdot z + \half (z \cdot Wz))^*= 0.\ee
\end{thm}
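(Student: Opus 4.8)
The plan is to build the required invariant \eq{hb} directly into the potential and then manufacture a nontrivial symmetry by a dimension count. Write $g(z):=Y\cdot z+\half(z\cdot Wz)$, so that $g_z(z)=Y^\dag+z^\dag W$ since $W=W^\dag$. In \eq{cc} I would take $\zeta(z)=g(z)$ together with $\xi(z)=h(g(z))$, where $h$ is a fixed strictly positive function of class $C^2$ (say $h=\exp$), yielding a system of dimension $m=n$ with potential $\psi(z)=\xi(z)\zeta_z(z)^\dag=h(g(z))\,(Y+Wz)$. This $\psi$ is of class $C^2$ with nonconstant flux $a=\psi_z$ as soon as $(W,Y)\neq 0$, so \eq{ca}, \eq{cc} hold. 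The point of the choice is that both $\zeta$ and $\xi$ are functions of the single scalar $g$ whose invariance we must arrange: by \eq{bb}, for every $(Z,\om)$ one has $g^*=g_z(Zz+\om)=\zeta^*$ and $\xi^*=h'(g)\,\zeta^*$.

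With this construction the defining conditions \eq{cu} for membership in $\{(Z,\om)\}_0$ collapse to the single requirement $g^*\equiv 0$. Indeed, if $g^*\equiv 0$ then $\zeta^*\equiv 0$ and $\xi^*\equiv 0$ identically in $z$; since $\zeta=g$ is nonconstant and $\xi$ is nonvanishing, the normalizations \eq{ci}, \eq{cj} force $c_{\xi,Z}=0$, whence $c_Z=0$ and $c_{\zeta,Z}=0$, so $(Z,\om)\in\{(Z,\om)\}_0$. Conversely every element of $\{(Z,\om)\}_0$ satisfies $\zeta^*\equiv 0$, i.e. $g^*\equiv 0$, which is exactly \eq{hb}. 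Thus \eq{hb} holds automatically throughout $\{(Z,\om)\}_0$, and it remains only to produce one genuinely nontrivial element, namely one with $Z\neq 0$ (so that $x^*=Xx=-Z^\dag x$ does not vanish identically). The degenerate case $W=Y=0$, in which $g\equiv 0$ and \eq{hb} is vacuous, is handled separately by any $Z$-system carrying a rotation symmetry.

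The crux is the purely linear-algebraic claim that for $n\ge 3$ there exists a nonzero $Z$ with $WZ+Z^\dag W=0$ and $Z^\dag Y=0$; taking $\om=0$ then gives $g^*=Y^\dag Zz+z^\dag(WZ)z=0$, because $Z^\dag Y=0$ annihilates the linear term while $WZ=-(WZ)^\dag$ is antisymmetric. I would argue inside the space $\mathcal{L}:=\{Z\in M^{\ntn}:WZ+Z^\dag W=0\}$ of infinitesimal isometries of the form $W$, whose dimension a direct computation in an eigenbasis of $W$ gives as $\binom n2+\binom{k+1}2$, with $k:=\dim\ker W=n-\rank W$. If $Y=0$ any nonzero $Z\in\mathcal L$ serves, since $\dim\mathcal L\ge\binom n2\ge 3$. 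If $Y\neq 0$, then $WY=0$ forces $k\ge 1$, and the linear map $\mathcal L\to\bbr^n$, $Z\mapsto Z^\dag Y$, has kernel of dimension at least $\dim\mathcal L-n\ge\binom n2+1-n=\tfrac12(n-1)(n-2)\ge 1$; any nonzero $Z$ in this kernel, with $\om=0$, is the desired nontrivial element.

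I expect the dimension count for $\mathcal L$ to be the only delicate step, precisely because the hypothesis $WY=0$ forces $W$ to be degenerate as soon as $Y\neq 0$, so the estimate must be uniform in $\rank W$. The surplus dimension beyond $\binom n2$ comes from the rank-one generators $uv^\dag$ with $u\in\ker W$, all of which lie in $\mathcal L$; when $\dim\ker W\ge 2$ one even writes down an explicit solution $Z=uv^\dag$ with $u\in\ker W\cap Y^\perp$ and arbitrary $v$, whereas the borderline $\dim\ker W=1$, where $\ker W=\mathrm{span}\,Y$, is exactly the case in which the rank-one generators no longer suffice and the kernel-dimension count above is genuinely needed to locate a (higher-rank) $Z$. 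A final remark worth recording is that $\zeta=g$ may violate the standing nondegeneracy convention \eq{cga} precisely when $\ker W\cap Y^\perp\neq\{0\}$; this is immaterial to the statement, which invokes only \eq{ca}, \eq{cc}, but if desired one restricts the domain $D$ to remove the offending null directions.
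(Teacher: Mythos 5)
Your construction is correct for the paper's standing dimension $n\ge 3$, and its core coincides with the paper's: build the prospective invariant $g(z)=Y\cdot z+\half z\cdot Wz$ into $\zeta$ and $\xi$, and look for $Z$ with $WZ$ antisymmetric. But you diverge in two substantive ways. First, the paper takes $\zeta,\xi$ to be functions of $g$ \emph{and} of an arbitrary $\hat Y(z)$ supported on $D^\perp_{WY}=\ker W\cap Y^\perp$ \eq{cy}--\eq{ha}; your minimal choice $\zeta=g$, $\xi=h(g)$ buys an exact characterization $\{(Z,\om)\}_0=\{(Z,\om):g^*\equiv 0\}$, which makes the ``for all'' clause of \eq{hb} immediate (the paper only proves the inclusion \eq{hd} and verifies \eq{hb} on the constructed subset). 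Second, and more importantly, the paper does not insist on $Z^\dag Y=0$: it allows $Z^\dag Y\in\range W$ and pairs each $Z$ with the unique compensating $\om_Z$ solving $W\om_Z=-Z^\dag Y$ \eq{hez}--\eq{hec}, so that $(Y^\dag+y^\dag W)(Zy+\om_Z)=0$ by cancellation of the cross terms; this is how the Galilean-type symmetries enter. You instead force $\om=0$, $Z^\dag Y=0$, and pay for the smaller symmetry space with an explicit dimension count on $\mathcal L=\{Z:WZ+Z^\dag W=0\}$ — a count the paper omits entirely, merely asserting that $\{Z\}_{WY}$ in \eq{he} is nontrivial. Your count is right ($\dim\mathcal L=\binom n2+\binom{k+1}2$, kernel of $Z\mapsto Z^\dag Y$ of dimension at least $\binom{n-1}2\ge 1$ for $n\ge3$), so your proof is self-contained where the paper's is not. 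Two caveats: for $n=2$ your $\om=0$ restriction genuinely fails (e.g.\ $W=\diag(1,0)$, $Y=\hat e_2$ forces $Z=0$, and the unique nontrivial symmetry \eq{gdi}, \eq{gdk} has $\om\neq0$), so the paper's $\om_Z$ mechanism is not optional there; and your closing remark that \eq{cga} can be restored by shrinking $D$ is backwards — restricting $D$ only makes \eq{cga} harder to satisfy, and the paper's $\hat Y$-dependence is precisely the device that repairs it when $D^\perp_{WY}\neq\{0\}$. Neither caveat affects the statement as posed, which requires only \eq{ca}, \eq{cc} and is set in dimension $n\ge3$.
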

\begin{rema*} Either $Y$ or $W$ may vanish. Familiar  rotation symmetries are here included.
\end{rema*}
\begin{proof} Denote
\be\label{cy} D^\perp_{WY} \, \eqadef \{ y \in \bbr^n \, | \, Wy = 0, Y\cdot y = 0\}.\ee

Choose
\be\label{cw} \hat Y \in C^2(D \cap D^\perp_{WY} \to \bbr)\ee
otherwise arbitrary, and construct $\Sigma (z)$ from \eq{cc}, \eq{bt} and smooth function $\zeta(z), \xi(z)$
\be\label{ha} \zeta(z) = \zeta(\hat Y(z), \; \, Y\cdot z + \half z\cdot Wz), \; \; \xi(z) = \xi(\hat Y(z), \; \; Y\cdot z + \half z\cdot Wz).\ee

Using \eq{cy}, there exists a nontrivial space
\be\label{he} \{ Z\}_{WY} \, \subseteq\, \{ Z \in M^{\ntn} \, | \, WZ \hbox{\ antisymmetric, \ } Z^\dag y = 0\; \, y \in D^\perp_{WY}\},\ee
and such that for all $Z \in \{ Z\}_{WY}$,
\be\label{hez} Z^\dag Y \in \hbox{\ range\ }\; \,  W.\ee

For each $Z \in \{ Z\}_{WY}$, using \eq{hba}, \eq{cy}, \eq{hez}  there exists a unique $\om_Z\in \bbr^n$ satisfying
\be\label{hea} Y \cdot \om_Z = 0
\ee
\be\label{heb} y \cdot \om_Z = 0, \; \;  y \in D^\perp_{WY}
\ee
\be\label{hec} W\om_Z = - Z^\dag Y.
\ee

From \eq{cw}
\be\label{cwa} \hat Y_z (y)^\dag \in D^\perp_{WY}, \; \; \, y \in D,
\ee
and thus from \eq{cw}, \eq{ba}, \eq{he}, \eq{heb}, for all $Z \in \{ Z\}_{WY}$,
\be\label{hdb} \hat Y_z (y) (Zy+\om_Z) = 0,\; \;  y \in D.
\ee

Using \eqref{he}, \eq{hea}, \eq{hec}, for all $Z \in \{ Z\}_{WY}$,
\be\label{hdz} (Y^\dag + y^\dag W) (Zy + \om_Z) = 0
\ee
establishing \eqref{hb} for all $(Z, \om_Z), \; Z \in \{ Z\}_{WY},   \om_Z, y \in D$.

Thus from \eq{ha}, \eq{hdb}, \eq{hdz}, \eq{cu}
\be\label{hd} \{ (Z,\om_Z), Z \in \{ Z\}_{WY}\} \subseteq \{ (Z,\om)\}_0
\ee
for the so constructed system $\Sigma(z)$.
\end{proof}

\begin{cor} In the special case that
\be\label{hdx} D^\perp_{WY} = \{ 0\},\ee
 the condition \eq{hd} holds with equality.
 \end{cor}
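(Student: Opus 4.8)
The forward inclusion in \eqref{hd} is already Theorem 2.6, so the plan is to establish the reverse inclusion $\{(Z, \om)\}_0 \subseteq \{(Z, \om_Z) : Z \in \{Z\}_{WY}\}$, taking $\{Z\}_{WY}$ to be the maximal space $\{Z \in M^{\ntn} : WZ \text{ antisymmetric}, \ Z^\dag Y \in \range W\}$ permitted by \eqref{he}. The first thing I would exploit is the simplification forced by \eqref{hdx}: since $D^\perp_{WY} = \ker W \cap Y^\perp = \{0\}$, the auxiliary function $\hat Y$ chosen in \eqref{cw} lives on a single point and drops out, so $\zeta$ and $\xi$ in \eqref{ha} become functions of the single scalar $\phi(z) \eqadef Y \cdot z + \half (z \cdot Wz)$. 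Writing $\zeta(z) = \zeta(\phi(z))$ and using $W = W^\dag$ gives $\zeta_z(z) = \zeta'(\phi(z))\,(Y + Wz)^\dag$, and similarly for $\xi$.

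Next I would take an arbitrary $(Z, \om) \in \{(Z, \om)\}_0$ and extract its defining content from \eqref{cu}. The condition $\zeta^*(z) = \zeta_z(z)(Zz + \om) = 0$ on $D$, together with the fact that $\zeta$ is nonconstant under the standing hypothesis \eqref{cga} (so $\zeta'(\phi) \neq 0$ on some open subset of $D$), forces
\be (Y + Wz)\cdot(Zz + \om) = z^\dag WZ z + (Z^\dag Y + W\om)\cdot z + Y\cdot\om = 0 \ee
on an open set, hence identically in $z$. Matching the quadratic, linear, and constant parts then yields exactly antisymmetry of $WZ$ together with the relations $W\om = -Z^\dag Y$ and $Y \cdot \om = 0$, which are \eqref{hec} and \eqref{hea} (condition \eqref{heb} being vacuous here). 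In particular $Z^\dag Y = -W\om \in \range W$, so $Z$ lands in $\{Z\}_{WY}$; the companion condition $\xi^* = 0$ produces the same identity and is redundant.

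It then remains to identify $\om$ with the unique $\om_Z$ of \eqref{hea}--\eqref{hec}. Here I would invoke \eqref{hdx} a second time to pin down $\ker W$: if $Y = 0$ then $\ker W = \{0\}$, so $W$ is invertible and $W\om = -Z^\dag Y = 0$ forces $\om = 0 = \om_Z$; if $Y \neq 0$, then $Y \in \ker W$ and $\ker W \cap Y^\perp = \{0\}$ give $\ker W = \mathrm{span}\,Y$, whence $W\om = -Z^\dag Y$ determines $\om$ modulo $\mathrm{span}\,Y$ and the constraint $Y\cdot\om = 0$ fixes the residual multiple of $Y$ uniquely since $|Y| \neq 0$. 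Either way $\om$ is the unique solution of \eqref{hea}, \eqref{hec}, namely $\om = \om_Z$, so $(Z, \om) = (Z, \om_Z)$ with $Z \in \{Z\}_{WY}$, giving the reverse inclusion and hence equality in \eqref{hd}.

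The step I expect to carry the actual content, rather than bookkeeping, is the uniqueness of $\om$: everything hinges on \eqref{hdx} both collapsing $\zeta, \xi$ to functions of the single invariant $\phi$ (which turns $\zeta^* = 0$ into a polynomial identity in $z$) and forcing $\ker W = \mathrm{span}\,Y$ (which is precisely what makes $\om_Z$ the unique solution of \eqref{hea}, \eqref{hec}). Without \eqref{hdx} the residual $\hat Y$-dependence and the larger kernel of $W$ would leave room for elements of $\{(Z, \om)\}_0$ not of the form $(Z, \om_Z)$, so I would keep careful track of exactly where \eqref{hdx} is used.
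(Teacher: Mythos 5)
Your proof is correct, and it supplies an argument the paper itself omits: the corollary is stated without a proof environment, followed only by the remark that via \eqref{cd} one may normalize to $\zeta(z)=Y\cdot z+\tfrac12 z\cdot Wz+\hbox{constant}$. Your route is the natural one and is consistent with that remark. The two load-bearing steps are exactly where they should be: first, with $D^\perp_{WY}=\{0\}$ the function $\hat Y$ is constant, so $\zeta^*(z)=\zeta'(\phi(z))\,(Y+Wz)\cdot(Zz+\om)$ and the nondegeneracy \eqref{cga} rules out $\zeta'\equiv 0$, whence the quadratic polynomial $(Y+Wz)\cdot(Zz+\om)$ vanishes on an open set and hence identically, yielding antisymmetry of $WZ$ together with \eqref{hea}, \eqref{hec} (and \eqref{hez} via $Z^\dag Y=-W\om\in\range W$); second, $\ker W\cap Y^\perp=\{0\}$ forces $\ker W=\mathrm{span}\,Y$ (or $\ker W=\{0\}$ when $Y=0$), so \eqref{hea} and \eqref{hec} determine $\om=\om_Z$ uniquely, \eqref{heb} being vacuous. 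One reading caveat, which you handle correctly but should state explicitly: \eqref{he} only asserts $\{Z\}_{WY}$ is \emph{some} nontrivial subspace of the displayed set, so the equality in \eqref{hd} is to be understood with $\{Z\}_{WY}$ taken maximal, as you do.
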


In this special case, by appeal to \eq{cd}, it suffices to consider the specific form of $\zeta(z)$
\be\label{hdy} \zeta(z) = Y\cdot z + \half z\cdot Wz + \hbox{\ constant}.
\ee

 \subsection{Richness of the symmetry group}

More generally, for $Z$-systems $\Sigma(z)$ we determine disjoint subspaces
\be\label{hh}  \land_\Sigma \oplus \land^\perp_\Sigma = \bbr^n
\ee
such that
\be\label{hi} \land^\perp_\Si \, \eqadef\, \{ e \in \bbr^n \, | \, Ze = Z^\dag e = \om \cdot e = 0\}
\ee
for all $(Z, \om)$ such that
\begin{align}\label{hha} \zeta^*(z) & = \zeta_z(z) (Zz+\om)\nonumber \\
&= c_{\zeta, Z}
\end{align}
follows from \eq{ba}, \eq{bb}, \eq{cj}.

For $e_\perp \in \land^\perp_\Sigma, \; (Z, \om)$ satisfying \eq{hha}, using \eq{ba}, \eq{hi}
\begin{align}\label{hj} (e_\perp\cdot z)^* &= e_\perp\cdot(Zz+\om)\nonumber\\
&=0,
\end{align}
and using \eq{ch}, \eq{hi}
\begin{align}\label{hja} (e_\perp\cdot x)^* &= - e_\perp\cdot Z^\dag x\nonumber\\
&=0.\end{align}

The components $e_\perp\cdot z,\; \;  e_\perp  \cdot x$ are invariant for all $(Z,\om)$ satisfying \eqref{hha}.  Additionally, $z^*$ is independent of $e_\perp\cdot z$ and $x^*$ is independent of $e_\perp\cdot x$.

In \eq{hh}, we introduce additional subspaces
\be\label{sza} \Lambda_\Si \, \eqadef \, \Lambda^V_\Si \oplus \La^I_\Si;\ee
\be\label{szb} \La^V_\Si \, \eqadef\, \{ e \in \bbr^n \, | \, Z^\dag e\neq 0 \hbox{ or\ } \om \cdot e \neq 0 \} \ee
for some $(Z, \om)$ satisfying \eq{hha};
\be\label{szc} \La^I_\Si \, \eqadef\, \{ e \in \bbr^n\, | \, Z^\dag e = \om \cdot e = 0 \} \ee
for all $(Z,\om)$ satisfying \eq{hha}, but
\be\label{szd} Z e \neq 0 \ee
for some such $Z$.

Using \eq{hh}, \eq{hi}, \eq{sza}, \eq{szb}, \eq{szc}, uniquely for any $y \in D$,
\begin{align}\label{sd} y &= y_V + y_I + y_\perp; \; \; \, y_V = e_V (e_V\cdot  y), \; \, e_V, y_V \in \La^V_\Si;\nonumber \\
y_I  & = e_I (e_I \cdot y), \; \; e_I, y_I \in \La^I_\Si; \; \, y_\perp = e_\perp (e_\perp \cdot y), \; \; e_\perp , y_\perp \in \La^\perp_\Si.\end{align}

Richness of the symmetry group $\cals_R$ is then quantified by an integer $L$, depending on $\Si(z)$
\begin{align}\label{gab} L &\eqadef \, \dim  \La_\Si\nonumber\\
&= \dim \La^V_\Si + \dim \La^I_\Si\nonumber \\
&= n - \dim \La^\perp_\Si\end{align}

\begin{lem}  Assume a $Z$-system satisfying \eq{ca}, \eq{cc}, with $\zeta(z)$ of the form \eq{ha}, as determined using \eq{hba}, \eq{cy}, \eq{cw}.  Assume $\xi(z)$ either of the same form or else determined from an equation of state \eq{me} satisfying \eq{mf}, \eq{mga} below.

Then with $\land^\perp_\Si$ determined from \eq{hi}, \eq{hha},
\be\label{hl} \land^\perp_\Si\subseteq D^\perp_{WY},\ee
with equality if and only if
\be\label{hla} D^\perp_{WY} \subseteq \ker Z, \; \; \om\perp D^\perp_{WY}\ee
for all $Z\in \{ Z\}_{WY}, \, \om = \om_Z.$
\end{lem}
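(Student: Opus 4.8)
My plan is to work throughout with the representation $\zeta(z)=F(\hat Y(z),s(z))$, writing $s(z)\eqadef Y\cdot z+\half\, z\cdot Wz$ for the affine–quadratic argument and recording from \eqref{cwa} that $\hat Y_z(z)^\dag\in D^\perp_{WY}$, together with the elementary identity $(D^\perp_{WY})^\perp=\range W\oplus\mathrm{span}(Y)$, which holds because $W$ is symmetric (so $\range W=(\ker W)^\perp$) and $WY=0$ (so $Y\in\ker W$). Reading \eqref{hi} off directly, $e\in\La^\perp_\Si$ means $e\perp\range Z$, $e\perp\range Z^\dag$ and $e\perp\om$ for every $(Z,\om)$ meeting \eqref{hha}; hence $\La^\perp_\Si$ is the orthogonal complement of the span of all these ranges and translation vectors, and by \eqref{hh} its complement $\La_\Si$ is exactly that span. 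Thus \eqref{hl} is equivalent to $\range W\oplus\mathrm{span}(Y)\subseteq\La_\Si$, while the equality case is equivalent to $D^\perp_{WY}\subseteq\La^\perp_\Si$.

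For \eqref{hl} I would exhibit explicit admissible generators sweeping out $\range W\oplus\mathrm{span}(Y)$, arguing directly that $e\in\La^\perp_\Si$ forces $We=0$ and $Y\cdot e=0$. Any $Z$ supported on $\range W$ (vanishing on $\ker W$) with $WZ$ antisymmetric gives $s^*=0$ and, since its range lies in $\range W\perp D^\perp_{WY}$, also $\hat Y^*=0$ by \eqref{cwa}, so $(Z,0)$ meets \eqref{hha}; when $\mathrm{rank}\,W\ge 2$ the ranges of these rotation-type generators fill $\range W$. For the remaining directions I would use the shear generators $Z=Y w^\dag$ with $w\in\range W\setminus\{0\}$: a direct computation gives $s^*=\big(|Y|^2 w+W\om\big)\cdot z+Y\cdot\om$, which vanishes once $\om\in\range W$ is chosen with $W\om=-|Y|^2 w$ (whence $Y\cdot\om=0$), while $\hat Y^*=0$ because both $Y$ and $\om$ are orthogonal to $D^\perp_{WY}$. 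Since $Z^\dag e=w\,(Y\cdot e)$ and $Ze=Y\,(w\cdot e)$ for these, the requirements $Ze=Z^\dag e=0$ force $Y\cdot e=0$ and $We=0$, so $e\in D^\perp_{WY}$; this also covers $\mathrm{rank}\,W\le 1$ provided $Y\ne 0$.

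For the equality I first note that for the constructed family $(Z,\om_Z)$, $Z\in\{Z\}_{WY}$, two of the three annihilation conditions on any $y\in D^\perp_{WY}$ are automatic: $Z^\dag y=0$ is built into \eqref{he} and $\om_Z\cdot y=0$ is \eqref{heb}. The only residual condition tested against this family is $Zy=0$, i.e. $D^\perp_{WY}\subseteq\ker Z$, which is exactly \eqref{hla}; moreover the explicit generators of the previous paragraph all annihilate $D^\perp_{WY}$, so they impose nothing further. The ``only if'' direction is then immediate: if $\La^\perp_\Si=D^\perp_{WY}$ then $D^\perp_{WY}$ is annihilated by every symmetry, in particular by the constructed family, giving \eqref{hla}. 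For the ``if'' direction one assumes \eqref{hla} and must show that \emph{no} $(Z,\om)$ meeting \eqref{hha} moves any $y\in D^\perp_{WY}$; here I would invoke the assumed form of $\xi$ (either of the form \eqref{ha}, so that $\xi$ too factors through $\hat Y$ and $s$, or determined by \eqref{me} under \eqref{mf}, \eqref{mga}) together with the nondegeneracy \eqref{cga} to conclude that every admissible $(Z,\om)$ respects the splitting $\bbr^n=(D^\perp_{WY})^\perp\oplus D^\perp_{WY}$ and hence acts on $D^\perp_{WY}$ only as the constructed family does.

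The main obstacle is precisely this last step: passing from control of the nominal generators $\{Z\}_{WY}$ to control of the \emph{entire} set of $(Z,\om)$ satisfying \eqref{hha}. The difficulty is that on $D^\perp_{WY}$ the argument $s$ is inert and only the arbitrary function $\hat Y$ varies, so accidental symmetries of the specific $\hat Y$ (or of $\xi$) could in principle move $D^\perp_{WY}$ into $(D^\perp_{WY})^\perp$ while still preserving $\zeta$; ruling these out, via \eqref{cga} and the structural hypothesis on $\xi$, is where the real work of the ``if'' direction lies. Everything else reduces to the linear algebra of the symmetric form $W$ and its null vector $Y$ under generators with $WZ$ antisymmetric, plus the rank bookkeeping needed to handle the low-rank cases of $W$.
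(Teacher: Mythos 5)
Your treatment of the inclusion \eqref{hl} and of the ``only if'' half of the equality is sound and is essentially the paper's argument with the generators written out: the paper introduces the auxiliary space $\tilde\La^\perp_\Si$ of \eqref{hlb} (cut out by $Z^\dag y=0$ and $\om\cdot y=0$ alone), identifies it with $D^\perp_{WY}$ using the constructed family \eqref{hd} together with \eqref{hea}, \eqref{heb}, \eqref{hec}, and then obtains \eqref{hl} because \eqref{hi} imposes the further condition $Zy=0$. Your rotation generators supported on $\range W$ and your shear generators $Z=Yw^\dag$ with $W\om=-|Y|^2w$ are exactly what makes the paper's richness claim $\tilde\La^\perp_\Si\subseteq D^\perp_{WY}$ explicit, so that portion is a useful elaboration rather than a departure.

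The genuine gap is the one you flag yourself and do not close: the ``if'' direction of the equality. The quantifier in \eqref{hi} runs over \emph{all} $(Z,\om)$ satisfying \eqref{hha}, whereas \eqref{hla} constrains only the constructed family $Z\in\{Z\}_{WY}$, $\om=\om_Z$; to conclude $D^\perp_{WY}\subseteq\La^\perp_\Si$ you must show that no admissible $(Z,\om)$ outside that family can move $D^\perp_{WY}$, and you only gesture at how \eqref{cga} and the hypotheses on $\xi$ would be ``invoked.'' This is not a phantom difficulty: if $\zeta$ and $\xi$ happened to be independent of some direction $\hat e\in D^\perp_{WY}$, then $(Z,\om)=(\hat e\hat e^\dag,0)$ would satisfy \eqref{hha} while violating $Z\hat e=Z^\dag\hat e=0$, so it is precisely the standing exclusion \eqref{cga} together with the assumed structure of $\hat Y$ and of $\xi$ that must be brought to bear, and that argument still has to be written. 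For what it is worth, the paper's own proof compresses this step into the single sentence ``If \eqref{hla} holds, the two spaces coincide,'' resting on the asserted equality \eqref{hlc}; so the step you have left open is the same step the paper treats as immediate, but a complete write-up should either carry out the exclusion argument or state explicitly that \eqref{hi} and \eqref{hla} are being read over the constructed family only.
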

\begin{proof} Denote
\be\label{hlb} \tilde \land^\perp_\Si\, \eqadef\, \{ y \in D\, | \, y \cdot (Zy'+\om) = 0 \}\ee
for all $y'\in D, \; (Z,\om)$ satisfying \eq{hha}.

 Using \eq{hd} in \eq{hlb}, then \eq{hea}, \eq{heb}, \eq{hec},
 \be\label{hlc} \tilde \land^\perp_\Si = D^\perp_{WY},\ee
 and \eq{hl} follows from comparison of \eq{hlb} with \eq{hi}. If \eq{hla} holds, the two spaces coincide.
 \end{proof}

 \begin{defn} A $Z$-system is closed if $\zeta(z) $ (but not $\xi (z)$) is homogeneous of degree zero.
 \end{defn}

 For a closed $Z$-system, from \eqref{cc},
 \be\label{ma} \suml^n_{j = 1} z_j\psi_j (z) = 0.\ee

 From \eqref{ma}, \eqref{bt}
 \begin{align}\label{mb} 0 &= \suml^n_{k = 1} z_k (\suml^n_{j=1} z_j\psi_j (z))_{z_k}\nonumber \\
 &= \suml^n_{k = 1} (z_k \psi_k (z) + z_k \suml^n_{j=1} z_j\psi_{j, z _k}  (z))\nonumber \\
 &= \suml^n_{j, k = 1} z_k z_j a_{kj}(z).\end{align}

 Thus using \eqref{ma}, \eqref{mb} in \eqref{bs}, the entropy flux $q$ satisfies
 \be\label{mc} \suml^n_{j = 1} z_j q_j (a_{j \cdot}) = 0.\ee

\subsection{Equation of state}

In determination of a $Z$-system of dimension $n$ from \eq{cc}, \eq{bt},  an alternative to independent specification of $\xi(z), \; \zeta(z)$ is explicit specification of $\zeta(z)$ and an equation of state implicitly determining $\xi(z)$ from $\zeta(z)$.
Such systems are of primary interest below, as an equation of state will be shown to necessarily hold for systems with a sufficiently rich symmetry group.

 \begin{defn}   A $Z$-system satisfying \eqref{cc} is associated with an equation of state if there exists a scalar function
 \be\label{md} z_\zeta \in C^1 (D \to \bbr^{d_\sigma})\ee
 such that $\xi(z), \zeta(z)$ are related by
 \be\label{me} \zeta(z) = \si(\xi(z), z_\zeta(z))\ee
 with $\si $ of class $C^1 $ satisfying
\be\label{mf} \frac{\partial\si}{\partial\xi} \, \mathop{\midl}_{z_\zeta} > 0.\ee
\end{defn}

In any specific example, $z_\zeta$ may be absent in \eqref{me}.  It is assumed throughout that any $z_\zeta$ in \eq{me} satisfies
\begin{align}\label{mga} z^*_\zeta (y)  &= z_{\zeta,z} (y)\; \; (Zy+\om)\nonumber \\
&= 0 \end{align}
for all $y \in D$,  for all $(Z, \om)$ such that \eqref{hha} holds.

Then from \eqref{me}, using \eq{hha}, \eq{mga}, for all $(Z,\om)$ such that \eq{hha} holds,
\begin{align}\label{mh} 0 &= \zeta(z)^*\nonumber \\
&=  \zeta_\xi (\xi, z_\zeta) \xi^* + \zeta_{z_\zeta} (\xi, z_\zeta) z^*_\zeta\nonumber\\
&= \zeta_\xi (\xi, z_\zeta) \xi^*,\end{align}
and
\be\label{mi} \xi^* = 0\ee
follows from \eqref{mh}, \eq{mf}.  Thus \eq{cu} holds, and $\{ (Z,\om)\}_0$ is the space of $(Z, \om)$ satisfying \eq{hha}.

Sufficient conditions for \eq{me}, \eq{mf} are obtained using  \eq{sd}.  We consider trajectories within $\Om$ determined from
\be\label{saa} y_\tau (\tau) = Zy (\tau) + \om, \; \; \, \tau \in \bbr,\ee
with $(Z,\om)$ satisfying \eq{hha} and initial data
\be\label{sab} y(0) = y_V (0) + y_I (0) + y_\perp (0)\ee
determining $e_V, e_I, e_\perp$ from \eq{sd}.

Using \eq{hi}, \eq{szc}, \eq{szd}, solutions of \eq{saa}, \eq{sab} satisfy
\be\label{sac} y_{V,\tau} (\tau) = e_V (Z^\dag e_V \cdot y_V (\tau) + e_V \cdot Zy_I (0) + e_V\cdot \om)\ee
\be\label{sad} y_{I,\tau} (\tau) = y_{\perp, \tau} (\tau) = 0.\ee

Within such trajectories, for $(Z,\om) \in \{ (Z,\om)\}_0$, from \eq{cu}
\be\label{sba} \zeta(y(\cdot))_\tau = \xi(y(\cdot))_\tau = 0 \ee
while for $(Z, \om) \in \{ (Z, \om)\}_\zeta$ in \eq{cxa}, from \eq{cxb}
\be\label{sbb} \zeta(y(\cdot))_\tau > 0.\ee

\begin{thm} Assume
\be\label{sb} \dim \{ ( Z, \om)\}_\zeta = 1\ee
in \eq{cx}, \eq{cxa}, \eq{cxb}.

For arbitrary $y_I \in \La^I_\Si, \; y_\perp \in \La^\perp_\Si$, assume $y_V (0) \in \La^V_\Si$ (depending on $y_I, y_\perp$) such that 1-manifolds $\Om_1(y_I, y_\perp)$ obtained from \eq{sab}, \eq{sac}, \eq{sad} with $Z = Z_\zeta, \; \om = \om_\zeta$ continue to $\partial D$ in both directions with respect to $\tau$.

Assume the function $\xi(z)$ such that
\be\label{sbc} \xi_z (y(\tau)) (Z_\zeta y(\tau) + \om_\zeta) > 0\ee
within $\om_1 (y_I, y_\perp)$.

Assume in addition that any point $y = (y_V, y_I, y_\perp)$ may be connected to some point in $\Om_1 (y_I, y_\perp)$ by a sequence of trajectories \eq{saa}, equivalently \eq{sac}, \eq{sad}, with $(Z, \om) \in \{ (Z, \om)\}_0$.

Then \eqref{me}, \eq{mf} hold throughout $D$, with
\be\label{sc} d_\sigma \le \dim \La^I_\Si + \dim \La^\perp_\Si.\ee
\end{thm}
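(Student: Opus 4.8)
The plan is to produce the equation-of-state data explicitly: take $z_\zeta$ to be the linear projection of $z$ onto $\La^I_\Si \oplus \La^\perp_\Si$, and build $\si$ by reparametrizing the distinguished $\zeta$-trajectories by the value of $\xi$. First I would set $z_\zeta(z) \eqadef (z_I, z_\perp)$, the $\La^I_\Si$- and $\La^\perp_\Si$-components of $z$ in the decomposition \eqref{sd}, so that $d_\sigma = \dim \La^I_\Si + \dim \La^\perp_\Si$ and \eqref{sc} holds with equality. This $z_\zeta$ is linear, hence $C^1$, and \eqref{mga} is immediate: for $e \in \La^I_\Si$ one has $Z^\dag e = \om\cdot e = 0$ by \eqref{szc}, while for $e \in \La^\perp_\Si$ the same vanishing holds by \eqref{hi}; in either case $(e\cdot z)^* = (Z^\dag e)\cdot z + e\cdot\om = 0$ for every $(Z,\om)$ satisfying \eqref{hha}.

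Next, I would fix $z_\zeta = (y_I, y_\perp)$ and work on the $1$-manifold $\Om_1(y_I, y_\perp)$ supplied by hypothesis, generated by \eqref{saa}--\eqref{sad} with $(Z,\om) = (Z_\zeta, \om_\zeta)$. Along it \eqref{hha} gives $\zeta(\tau) = \zeta(0) + c_{\zeta, Z_\zeta}\tau$ with $c_{\zeta, Z_\zeta} > 0$ by \eqref{cxb}, while \eqref{sbc} makes $\tau \mapsto \xi(\tau)$ strictly increasing and $C^1$. Since the trajectory runs to $\partial D$ in both directions, $\tau \mapsto \xi$ is a $C^1$ diffeomorphism onto the interval of $\xi$-values attained at this $z_\zeta$; inverting and substituting yields $\zeta = \si(\xi, z_\zeta)$ on $\Om_1(y_I, y_\perp)$, with $\partial\si / \partial\xi = c_{\zeta, Z_\zeta} / \xi^* > 0$, which is \eqref{mf}.

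I would then extend this identity from the distinguished manifolds to all of $D$ via the connectivity hypothesis. Any $y$ is joined to a point of $\Om_1(y_I, y_\perp)$ by trajectories \eqref{saa} with $(Z,\om) \in \{(Z,\om)\}_0$; along these $\xi^* = \zeta^* = 0$ by \eqref{cu}, \eqref{sba}, and $z_\zeta$ is invariant by the computation of the first paragraph. Hence the triple $(\xi, z_\zeta, \zeta)$ is unchanged in passing from $y$ to its image in $\Om_1(y_I, y_\perp)$, so the relation $\zeta = \si(\xi, z_\zeta)$ propagates to $y$, establishing \eqref{me} throughout $D$.

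The main obstacle will be the joint $C^1$-regularity and global single-valuedness of $\si$. One must check that $\Om_1(y_I, y_\perp)$, and with it the reparametrization $\tau \leftrightarrow \xi$, depends $C^1$ on $z_\zeta$, which I would obtain from smooth dependence of the linear flow \eqref{saa} on its transverse data together with the inverse function theorem applied to $\xi^* > 0$. Equally delicate is consistency of the extension step: two $\{(Z,\om)\}_0$-paths issuing from $y$ must not reach points of $\Om_1(y_I, y_\perp)$ carrying different $\xi$-values. This follows because $\xi$ is constant along every connecting trajectory, so all reachable points share the single value $\xi(y)$; but promoting this from individual trajectories to a globally well-defined $\si$ on $D$ is the crux of the argument.
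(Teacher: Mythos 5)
Your proposal is correct and follows essentially the same route as the paper: establish $\zeta=\si(\xi,z_\zeta)$ on each $\Om_1(y_I,y_\perp)$ by the monotonicity of $\zeta$ and $\xi$ in $\tau$, then propagate to all of $D$ via the connectivity hypothesis and the invariance of $\xi,\zeta,z_\zeta$ along $\{(Z,\om)\}_0$-trajectories. The only cosmetic difference is that you make $z_\zeta$ explicit as the projection onto $\La^I_\Si\oplus\La^\perp_\Si$ and argue consistency via invariance of $\xi$, where the paper leaves $z_\zeta$ implicit and invokes uniqueness of $\tau$ from the monotonicity of $\zeta$; these are equivalent.
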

 \begin{proof} Within each $\Om_1(y_I, y_\perp)$, from \eq{sbb}, \eq{sbc} we have $\zeta(y(\tau)), \; \xi (y(\tau))$ increasing monotonically with respect to  $\tau$.  Thus determining $z_\zeta$ as required using the values of $y_I, y_\perp$, we have \eq{me}, \eq{mf} within each $\Om_1 (y_I, y_\perp)$ satisfying \eq{sc}.

 By assumption, any point $(y_V,  y_I, y_\perp)$ is connected to some $y_V (\tau), y_I, y_\perp) \in \Om_1(y_I, y_\perp)$ by trajectories \eq{saa} with $(Z,\om) \in \{ (Z, \om)\}_0$.

 Using \eq{sba}
 \begin{align}\label{se} \zeta(y_V, y_I, y_\perp) &= \zeta(y_V(\tau), y_I, y_\perp)\nonumber\\
 \xi(y_V, y_I, y_\perp) &= \xi(y_V(\tau), y_I, y_\perp).\end{align}

 The value of $\tau$ such that \eq{se} holds is unique, using \eq{sbb}.  Thus from \eq{se}, the relationship
 \eq{me}, \eq{mf} holds throughout $D$.
 \end{proof}

In the special case of $\Si(z)$ satisfying \eq{ca}, \eq{cc} and \eq{me} of the form
\be\label{mea} \sigma = \si(\xi)\ee
with $z_\zeta(z) $ absent, such that \eqref{mf} holds, we introduce additional transformations $\calt_{\zeta, f}$.

The image
\be\label{mia} \Si'(z) = \calt_{\zeta, f} \Si(z)\ee
is also of the form \eq{ca}, \eq{cc}, $z$ unchanged, with
\be\label{mib} \zeta' (z) = f(\zeta(z))\ee
for functions
\be\label{mic} f \in C^2 (\{ \zeta (y), \; \, y \in D\} \to \bbr), \; \; \, f_\zeta (\cdot) \neq 0.\ee

The system $\Si'(z)$ is associated with an equation of state
\be\label{mid} \zeta' (z) = \sigma'(\xi' (z))\ee
with $\sigma'$ satisfying
\be\label{mie} \sigma' \Big(\frac{\sigma^{-1} (\la)}{f_\zeta(\la)}\Big) = f (\la), \; \; \, \la \in \{ \zeta(y), \; \, y \in D \}.\ee

From \eq{mib}, \eq{mid}, \eq{mie}, $\xi  (z), \xi'(z)$ are related by \eq{cd}, and from \eq{cc}, \eq{bt}
\be\label{mih} \psi' (z) = \psi (z), \; \; \, a'_{\cdot, \cdot} (z') = a_{\cdot, \cdot} (z) \ee

From \eq{mie}, \eq{mic}, the condition \eq{mf} and thus \eq{mi} extends to $\Si'(z)$, so the nontrivial symmetry group is preserved,
\be\label{mig} \{ (Z', \om')\}_0 = \{ (Z,\om)\}_0.\ee

\subsection{Hyperbolicity}

Sufficient conditions for hyperbolicity \eq{btc} are readily obtained for $Z$-systems $\Si(z)$  satisfying \eq{ca}, \eq{cc}, with an associated equation of state \eq{me} satisfying \eq{mf}, \eq{mga}.

The corresponding symmetry group $\{ (Z, \om)\}_0$ is first used to remove ambiguity  in a time-like direction $e_H$ \eq{btb}.

Necessarily $e_H \cdot x $ is invariant, requiring
\be\label{btd} X^\dag e_H = Ze_H = 0 \ee
for all $(Z, \om) \in \{ ( Z,\om)\}_0$, using \eq{ch}.  However $Z^\dag e_H $ need not vanish.

Setting
\be\label{btm}\theta^* = Z\theta, \; \; (Z,\om) \in \{ Z, \om)\}_0\ee
by convention, using \eq{bc}, \eq{bu} \eq{ch}, \eq{btd}, we obtain
\be\label{btl} (\theta \cdot (e_H \cdot \psi)_{zz} (z) \theta)^* = 0 \ee
for all $z\in D, \; (Z, \om) \in \{ ( Z, \om)\}_0$.  Thus hyperbolicity of a $Z$-system is invariant under $\cals_R$.

\begin{defn} A $Z$-system $\Si(z)$ admits a time-like direction $e_H $ if \eq{btb}, \eq{bta}, \eq{btc}, \eq{btd} hold.
\end{defn}

We observe also that \eq{btd} is necessary for reduced form of $\Si(z)$ self-similar with respect to $e_H \cdot x$.

\begin{defn} A $Z$-system is hyperbolic if there exists a time-like direction $e_H$, as per definition 2.11, such that \eq{btc} holds. A $Z$-system is uniformly hyperbolic if \eq{btc} holds with strict inequality throughout $D$.
\end{defn}

Explicit sufficient conditions for hyperbolicity are now obtained.

Using \eq{cc}, \eq{me} with
\be\label{bwa}z_\zeta(z) = e_{z_\zeta} \cdot z,\; \; \, e_{z_\zeta} \in \bbr^n\ee
in \eqref{btc}, we obtain
\begin{align}\label{bwb}
&(e_H \cdot \psi)_{zz} = \big(\xi\zeta_{(e_H\cdot z)} \big)_{zz}\nonumber\\
= &\xi\zeta_{(e_H\cdot z)zz} + \frac{ \zeta_{(e_H\cdot z)}\zeta_{zz}}{\sigma_\xi} \nonumber \\
&+ \frac{1}{\sigma_\xi} \big( \zeta^\dag_{(e_H \cdot z)z} \zeta_z + \zeta^\dag_z \zeta_{(e_H\cdot z)z}\big)\nonumber\\
&- \frac{ \zeta_{(e_H\cdot z)} \sigma_{\xi\xi} }{ \sigma^3_\xi }
 (\zeta^\dag_z - \sigma_{z_\zeta} e_{z_\zeta}) \big(\zeta_z - \sigma_{z_\zeta} e^\dag_{z_\zeta}\big) \nonumber \\
&+ \frac{1}{\sigma_\xi } (\zeta^\dag_{(e_H\cdot z) z} \zeta_z + \zeta^\dag_z \zeta_{(e_H \cdot z)z})\nonumber \\
&-\frac{\sigma_{z_\zeta}}{\sigma_\xi } (\zeta^\dag_{(e_H\cdot z )z} e^\dag_{z_\zeta} + e_{z_\zeta} \zeta_{(e_H\cdot z)z})\nonumber \\
&- \frac{\zeta_{(e_H\cdot z)}}{\sigma_\xi} \sigma_{z_\zeta z_\zeta} e_{z_\zeta} e^\dag_{z_{\zeta}}.\end{align}

Sufficient conditions for \eq{btc} are that the separate terms in \eq{bwb} are each nonnegative semidefinite, in the sense of symmetric $n$-matrices.

For the special case of an equation of state of the form \eq{mea}, such follows from
\be\label{bwc} \sigma_{\xi\xi} < 0 < \xi, \sigma_\xi,  \zeta_{(e_H\cdot z)},\ee
\be\label{bwg} \zeta_{(e_H\cdot z)zz} \ge 0,\ee
\be\label{bwd} \zeta^\dag_{(e_H\cdot z)z} \zeta_z + \zeta^\dag_z \zeta_{(e_H\cdot z)z} + \zeta_{(e_H\cdot z)} \zeta_{zz}
 - \frac{\sigma_{\xi\xi}}{\sigma^2_\xi} \zeta_{(e_H \cdot z)} \zeta^\dag_z \zeta_z  \ge 0.\ee

In the more general case \eq{me}, \eq{bwa}, additional conditions are required:
\be\label{bwe} \zeta^\dag_{(e_H\cdot z)z} \| e_{z_\zeta} , \; \, \sigma_{z_\zeta} \; (\zeta_{(e_H\cdot z)z} e_{z_\zeta}) \le 0;\ee
\be\label{bwf} \sigma_{z_\zeta z_\zeta} \le 0.\ee

\subsection{Euler Systems}

Euler systems, models of fluid flow, provide motivating examples for this study.  The isentropic system is a $Z$-system, satisfying \eq{bt} with
\be\label{eaa} z = \begin{pmatrix} H(P) - \tfrac 12 |u|^2\\ u_1\\ \vdots\\ u_{m-1} \end{pmatrix} , \; \; \; \psi(z) = P \; \begin{pmatrix} 1\\ u_1\\ \vdots\\ u_{m-1} \end{pmatrix} . \ee

In \eq{eaa}, $u_i$ are the fluid velocity components, $P$ the pressure, and $H(P) $ the enthalpy.

The condition \eq{cc} is satisfied with
\be\label{eab} \xi (z) = P, \; \; \zeta(z) = z_1 + \tfrac 12 \suml^m_{j = 2} z^2_j\ee
of the form \eq{hdy}, related by an equation of state
\be\label{eac} \zeta(z) = H(P) = \sigma(\xi)\ee
of the form \eq{mea}.

In particular, for this class of systems \eq{sb}, \eq{cxa}, \eq{cxb} hold with
\be\label{sl} Z_\zeta = 0, \; \; \om_\zeta = \begin{pmatrix} 1\\ 0\\ \vdots\\ 0 \end{pmatrix}
\ee
and \eq{bwc} suffices for hyperbolicity.

For this system, the symmetry group $\{ (Z, \om)\}_0$ is as constructed in \eq{he}, \eq{hez}, \eq{hea}, \eq{heb}, \eq{hec}, with $\hat Y (z), \; D^\perp_{WY}$ vanishing.

For this class of systems, \eq{gab} holds with
\be\label{so} L = n,\ee
and \eq{sza} with
\be\label{sm} \dim \La^I_\Si = 0.\ee

The time-like direction $e_H$ satisfies \eq{btd} with
\be\label{sp} e_H = \hat e_1,\ee
with $\hat e_k, \; \; k = 1, \cdots, n$ denoting unit vectors throughout.

By inspection, this class of systems is a hierarchy with respect to $n$, determined from transformations $\calt_{\hat e_k, 0}, \; \, k = 2, \cdots, n$, as introduced in \eqref{cjd}.

The Euler system including conservation of energy is of the form \eq{aa}, \eq{ab}, \eq{ac}, \eq{bt} with
\be\label{eba} n = m+1,\ee

\be\label{ebb} z' = \frac 1T \begin{pmatrix} &G(P,T) - \tfrac 12 |u|^2\\ &\; \; \, u_1\\ &\; \; \, \vdots \\ &\; \; \, u_{m-1} \\ &-1 \end{pmatrix} , \; \; \, \psi' (z') = \frac PT \begin{pmatrix}
1 \\ u_1\\ \vdots \\ u_{m-1} \end{pmatrix} \ee
with $T$ the fluid temperature and $G(P,T)$ the Gibbs free energy.

For this system, a truncated form of \eq{cc} holds,
\be\label{ebc} \psi'_i (z') = \xi'(z') \zeta'_{z_i} (z'), \; \; i = 1,\cdots, m\ee
with
\be\label{ebd} \xi'(z') = \frac{P}{T^2}, \; \; \zeta' (z') = - \frac{z'_1}{z'_n} + \tfrac 12 \suml^m_{j = 2} \;\, \frac{z'^2_j}{z'^2_n} ,\ee
related by an equation of state
\be\label{ebe} \zeta'(z') = G(P,T) = \sigma' (\xi', z'_n).\ee

From \eq{ebd}, this system is closed as per definition 2.8.

This system may be regarded as the reduced form of an extended Euler system, a $Z$-system of dimension $n$, which is stationary with respect to an additional independent variable $\nu = x_n$, which has physical dimensions of length$^2/\hbox{time}$.  The corresponding potential function
\be\label{eca} \psi'_n (z') = \frac PT  (G(P,T) + \tfrac 12 |u|^2)\ee
is obtained by extending \eq{ebc} to $i = n$, using \eq{ebd}, thus recovering \eq{cc}.

For the extended Euler system, by convention
\be\label{ecb} x = \begin{pmatrix} t\\ x_1\\ \vdots\\ x_{m-1} \\ \nu\end{pmatrix} .\ee

Then from \eqref{bt}, \eq{ebb}, \eq{eca}, this system is
\begin{align}\label{ecc}
 &\rho_t + \suml^{m-1}_{i = 1} (\rho u_i)_{x_i} + (\rho(G + \tfrac 12 |u|^2))_\nu = 0;\\
 &(\rho u_i)_t + (\rho u^2_i + P)_{x_i} + \suml_{j\neq i} (\rho u_i u_j)_{x_j}\nonumber\\
\label{ecd} &\qquad \; \; + (\rho u_i(G + \tfrac 12 |u|^2) + 2 u_iP)_\nu = 0, \; \; i = 1, \cdots, m - 1;\\
&(\rho(E + \tfrac 12 |u|^2))_t + \suml^{m-1}_{i = 1} (\rho u_i(H + \tfrac 12 |u|^2))_{x_i}\nonumber \\
\label{ece} & \qquad \; \; + (\rho (E + \tfrac 12 |u|^2) (G + \tfrac 12 |u|^2) + 2 | u|^2 P)_\nu = 0.\end{align}

In \eq{ecc}, \eq{ecd}, \eq{ece}, the symbols admit familiar interpretation: $\rho$ is the fluid density; $E = E(V, S)$ the internal energy, $V$ the specific volume, and $S$ the specific entropy; $H = H(P,S)$ the enthalpy.  The same conditions on the equation of state for hyperbolicity apply as for the Euler system including conservation of energy.

For the system \eq{ecc}, \eq{ecd}, \eq{ece}, the corresponding entropy flux $q'$ is
\begin{align}\label{ecf}
q' &= \rho S \begin{pmatrix} 1 \\ u_1\\ \vdots\\
u_{m-1}\\ G+\tfrac 12 |u^2| \end{pmatrix} \nonumber\\
&=\frac{TS}{PV} \; \psi' (z').
\end{align}

The conditions \eq{so}, \eq{sp} apply also to this class of systems.  The condition \eq{sza} holds with
\be\label{sq} \La^I_\Si = span \{ \hat e_n\}\ee
and \eq{sb}, \eq{cxa}, \eq{cxb} with
\be\label{sr} Z_{\zeta, 1, n} = 1, \; \, \om_\zeta = 0,\ee
all other components of $Z_\zeta $ vanishing.

In view of \eqref{ebe}, the conditions \eq{bwe}, \eq{bwf} are additionally required for hyperbolicity.

The hierarchy of systems with respect to $n$ is now determined using $\calt_{\hat e_k, 0}, \; \, k = 2, \cdots, n-1$.

The isentropic Euler system $\Si(z)$ and the extended Euler system $\Si'(z')$, $Z$-systems of dimension $n-1, n$, respectively, are related by transformations described above.  In particular
\be\label{eda} \Si(z) = \calt_{\hat e_n, -1} \Si'(z');\ee
the value
\be\label{edb} c_e = - 1\ee
in \eq{cjd} is determined from the adopted form of $z'_n$ in \eq{ebb}.

Conversely
\be\label{edc} \Si' (z') = \calt_{q, n, c_e} \hat \Si(\hat z)\ee
as determined from \eq{dja}, \eq{dkb}, \eq{dkc}, \eq{cjc}, \eq{edb} with $l = n$, with $\hat \Si(\hat z)$ a $Z$-system of dimension $n$ in which entropy rather than energy is conserved.

This system is determined using \eq{cc} with
$\xi(z) = \hat \xi (\hat z), \zeta(z) = \hat \zeta (\hat z)$ in \eqref{eab},
\be\label{edd} \hat z = \begin{pmatrix} z_1\\ \cdots\\ z_m\\ \hat z_n\end{pmatrix} \; , \; \, \hat z_n = T\ee
using \eqref{eba}.  The equation of state for $\hat \Si (\hat z)$ is
\be\label{ede} \hat \zeta (\hat z) = G(P, T) = \hat\sigma (\hat \xi(\hat z), \hat z_n);\ee
the extension of an isentropic system $\Si(z)$ to $\hat\Si(\hat z)$ occurs only through the equation of state.  We note that from \eq{eab}, \eq{edd}, the condition \eq{cga} fails, with $e_\perp  = \hat e_n$.

\section{Coupling of $Z$-systems}

Given independent systems $\Si^k (z^k), \; \, k = 1,\cdots, K$, each of the form \eq{aa}, \eq{ab}, \eq{ac}, \eq{bt}, sharing a common domain $\Om$ but with
\be\label{laz} z^k(x) \in D^k \subseteq \bbr^{n_k}, \; \; \, k = 1, \cdots, K,\ee
may be coupled by imposition of a relationship on the respective dependent variables $z^k$.  The effect of such on the respective symmetry groups  $\{ (Z^k, \om^k, X^k)\}$ satisfying \eq{ba}, \eq{bu} for each value of $k$ is addressed.  Attention is restricted to the case
\be\label{laa} c_Z = 0 \ee
in \eq{bu}, independently of $k$.

The given systems are conveniently assembled into a single system, also of the form \eq{aa}, \eq{ab}, \eq{ac},
\be\label{lad} \bar\Si(\bar z) \, \eqadef\, (\Si^1 (z^1) \cdots \Si^K (z^K))\ee
with the same domain $\Om$ but of dimension
\be\label{lab} \bar n \, \eqadef\, n + 1 \, \eqadef\, \suml^K_{k = 1} \, n_k,\ee
with dependent variable
\be\label{lac} \bar z \, \eqadef\, \begin{pmatrix} z^1 \\ \vdots \\ z^K\end{pmatrix}.\ee

The system $\bar \Si (\bar z) $ also satisfies \eq{bt}, with
\be\label{lae}\bar \psi (\bar z) \,\eqadef \,\suml^K_{k = 1} \psi^k(z^k).\ee

From \eq{lac}, \eq{lae}, the symmetry group $\{( \bar Z, \bar\om, \bar X)\}$ for the system $\bar \Si(\bar z)$ satisfies \eq{ba}, \eq{bu}, \eq{laa} with
\be\label{laf} \bar Z = \hbox{ diag\ } (Z^1, Z^2, \cdots, Z^k) \in M^{\bar n \times \bar n}\ee
\be\label{lag} \bar \om = \begin{pmatrix} \om^1\\ \vdots\\ w^K\end{pmatrix} \, \in \bbr^{\bar n} \ee
for $(Z^k, \om^k, X^k), \;\;  k = 1, \cdots, K$, restricted by
\be\label{lah} X^k = \bar X, \; \;  \, k = 1, \cdots, K\ee
because of the common domain $\Om$.

A coupled system $\Si(z)$, also of the form \eq{aa}, \eq{ab}, \eq{ac}, in the same domain $\Om$ but of dimension $n$ in \eq{lab}, is obtained by a simplification of the form \eq{dh},
\be\label{lba} e_\la \cdot \bar z = c_\la, \; \; \, e_\la \in \bbs^n, \; \; \, c_\la \in \bbr,\ee
applied to the system $\bar \Si (\bar z)$; with parameter(s) $\la$ to be determined below.

The dependent variables $z, \bar z$ are related by
\be\label{lbc} z = \Ga_\la \bar z, \; \; \, \Ga_\la \in M^{n\times \bar n}\ee
necessarily satisfying
\be\label{lbd} \Ga_\la e_\la = 0 \ee
and such that
\begin{align}\label{lbf} &\bar \Ga_\la \in M^{\bar n \times \bar n}\nonumber \\
&\bar \Ga_{\la, ij} \, \eqadef \, \begin{cases}  \Ga_{\la, ij},  \; \; \, i = 1, \cdots, n, \; \; \, j = 1, \cdots, \bar n\\
e_{\la, j}, \; \; i = \bar n, \; \; \, j = 1, \cdots, \bar n\end{cases}\end{align}
is orthogonal.  Thus from \eq{lbc}, \eq{lbd}, \eq{lbf}
\be\label{lbh} {z \choose c_\la} = \bar \Ga_\la \bar z, \; \; \, \bar z = \bar\Ga^\dag_\la \; {z\choose c_\la} \ee
and
\be\label{lbi} \frac{\partial\bar z}{\partial z} = \Ga^\dag_\la.\ee

The coupled  system $\Si(z)$ also satisfies \eq{bt}, with
\be\label{lbe} \psi(z) \, \eqadef\, \bar \psi(\bar z).\ee

Then from \eq{lbc}, \eq{lbe}, \eq{lae}, \eq{lad},
\be\label{lbg} \Si (z) = \bar \Si (\bar z) \Ga^\dag_\la\ee
with density/flux functions related by
\be\label{lbj} a_{i \cdot} (z) = \Ga_\la \bar a_{i\cdot} (\bar z), \; \; \, i = 1, \cdots, m.\ee

Given hyperbolic constituent systems $\Si^k(z^k)$ as per \eq{btb}, \eq{btc}, this coupling strategy determines a hyperbolic system $\Si(z)$.  Without loss of generality, applying transformations $\calt_{QU}$ \eq{an}, \eq{anb} to each $\Si^k(z^k)$ as necessary, we assume common $e_H$,
\be\label{lja} (e_H \cdot \psi^k)_{z^kz^k} (z^k) \ge 0, \; \; \, k = 1,\cdots, K.\ee

Then from \eq{lae}, \eq{lac}, \eq{lja}
\be\label{ljb} (e_H \cdot \bar \psi)_{\bar z \bar z} (\bar z) \ge 0 \ee
and from \eq{ljb}, \eq{lbc}, \eq{lbe}, \eq{lbi}
\begin{align}\label{ljc} (e_H \cdot \psi)_{zz} (z) & = \Ga_\la ((e_H\cdot \bar \psi)_{\bar z \bar z} (\bar z)) \Ga^\dag_\la\nonumber \\
&\ge   0.\end{align}

The symmetry group for $\Si (z)$ is obtained from that for $\bar \Si (\bar z)$ by application of \eq{di}, \eq{dj}, \eq{djb}.

From \eqref{di}, \eq{lba}, using \eq{ba}, \eq{laf}, \eq{lag}, necessarily
\begin{align} \label{lda} 0 &= e_\la \cdot \bar z^*\nonumber\\
&=e_\la \cdot (\bar Z \bar z + \bar\om)\end{align}
for all $\bar z$, which can hold only if $\bar Z, \bar\om, e_\la$ satisfy
\be\label{ldc} \bar Z^\dag e_\la = 0 \ee
and
\be\label{ldd} \bar\omega\cdot e_\la = 0.\ee

Thus \eqref{dj}, \eq{djb} are recovered.  We note that \eq{ldc} permits
\be\label{lde} \bar Z e_\la \neq 0, \; \; \; e_\la \notin \land^\perp_{\bar\Si(\bar z)}.\ee

With $\bar Z, \bar \om, e_\la $ so restricted, to obtain a symmetry group $\{ (Z,\om, X)\}$ for $\Si(z)$, satisfying \eq{bu}, \eq{ba}, \eq{laa}, for each $(\bar Z, \bar \om, \bar X)$ it suffices to put
\be\label{lca} X = \bar X\ee
and determine the corresponding $Z,\om$ such that
\begin{align}\label{lcb} z^* \,  &\eqadef \, Zz+\om\nonumber\\
&= \Ga_\la \bar z^*\end{align}
using \eq{lbc}.

In \eq{lcb}, successively we use \eq{ba}, \eq{laf}, \eq{lab}, then \eq{lbh}, then \eq{lbf} to obtain
\begin{align}\label{lcc} z^* &= \Ga_\la (\bar Z\bar z + \bar \om)\nonumber \\
&= \Ga_\la (\bar Z \bar \Ga^\dag {z\choose c_\la} + \bar \om)\nonumber\\
&= \Ga_\la (\bar Z (\Ga^\dag z + c_\la e_\la) + \bar \om).\end{align}

Comparison of \eq{lcb}, \eq{lcc} identifies
\be\label{lcd} Z = \Ga_\la \bar Z \Ga^\dag_\la,\ee
\be\label{lcd} \om = \Ga_\la \bar \om + c_\la \Ga_\la \bar Z e_\la.\ee

Remaining unaddressed is the determination of $\bar Z, \bar \om, \bar X$ satisfying \eq{lah}, and of $e_\la$ satisfying \eq{ldc}, \eq{ldd}.

In the special case that the given systems $\Si^k(z^k)$ coincide, with
\be\label{lfa} \Si^k(z^k) = \Si^1(z^k), \; \; \; k = 2, \cdots, K,\ee
the sets $\{ (Z^k, w^k, X^k)\}$ will be independent of $k$.  Then
\be\label{lfb} \bar X = X^1\ee
suffices in \eq{lah}.  As in this case
\be\label{lfc} n_k = n_1, \; \; \, k = 2, \cdots, K,\ee
it suffices to choose
\be\label{lfd} e_\la = \begin{pmatrix} \la_1 \ue\\
\vdots \\
\la_k \ue\end{pmatrix}, \; \; \; \la \in \bbs^{K-1}, \; \; \, \ue \in \bbs^{n_1 -1}.\ee

Now \eq{ldc} follows from $\ue$ satisfying
\be\label{lfe} Z^{1\dag} \ue = 0,\ee
and \eq{ldd} follows from
\be\label{lff} \big( \suml^K_{k = 1} \la_k\big) (\om^1\cdot \ue) = 0\ee
retaining considerable flexibility in $\la$ for $K \ge 3$.

Similar results hold for the given $\Si^k(z^k)$ $Z$-systems with $\zeta_k$ coinciding,
\be\label{lga} \zeta_k(z^k) = \zeta_1 (z^k), \; \; \, k = 2, \cdots, K,\ee
differing only in the associated equations of state \eq{me}, satisfying \eq{mf}, \eq{mga}.

The spaces  $\{ ( Z^k, \om^k)\}_0$ as determined from \eq{cj}, \eq{cu} are independent of $k$, and \eq{lah} holds with
\be\label{lgb} \bar X = X^1 = - Z^{1\dag}\ee
using \eq{ch}.

The results \eq{lfd}, \eq{lfe}, \eq{lff} hold with
\be\label{lgc} n_k = m \ee
using \eq{ca}.

More generally, assume the given $\Si^k(z^k)$ to be $Z$-systems, satisfying \eq{lgc} with prescribed $\zeta_k (z^k)$ and equations of state \eq{me},
\be\label{lha} \zeta_k (z^k) = \sigma_k (\xi_k (z^k),  z^k_\zeta (z^k))\ee
satisfying \eq{mf}, \eq{mga}.

The spaces $\{ (Z^k, \om^k)\}_0$ determined from \eq{cj}, \eq{cu} separately for each $k = 1, \cdots, K$ generally will not coincide, but from \eq{ch}
\be\label{lhb} X^k = - Z^{k\dag}, \; \; \; k = 1, \cdots, K.\ee

Determination of $\bar X$ satisfying \eq{lah} is materially simplified by the fact that from theorem 2.4 and \eq{lhb}, each $\{ X^k\}$ is a linear vector space.  For each $\{ X^k\}$, basis elements $X^k_l, \; l = 1, \cdots $ are determined with coinciding characteristic and minimal polynomials $P^k_l (X^k_l)$. Typical expressions for $P^k_l (X^k_l)$ include
\[ (X^k_l)^2 + I_m, \; \; \, (X^k_l)^2, \; \; (X^k_l)^3.\]

A necessary condition for \eq{lah} is existence of $P^k_l$ independent of $k$, such that
\be\label{lhc} P^k_l (X^k_l) = P^1_l (X^k_l), \; \; k = 2, \cdots, K \ee
for some value(s) of $l$.

Then
\be\label{lhd}  X^k_l = X^1_l, \; \; \, k = 2,\cdots, K\ee
follows from similarity transformations \eq{ar}, with $U = U^k$ depending on $k$, and \eq{lah} is satisfied with
\be\label{lhe} \bar X = X^1_l.\ee

The vector $e_\la$ is again obtained from \eq{lfd}.  The required condition \eqref{ldc} now follows from
\be\label{lhf} X^1_l \ue = 0,\ee
taking
\be\label{lhg} Z^k = - X^{1\dag}_l , \; \; \, k = 1,\cdots K\ee
using \eq{lhd}, \eq{ch}.

For each $k = 1, \cdots, K$, corresponding to $Z^k$ in \eq{lhg}, there exists  $\om^k$.  Now the required condition \eq{ldd} is satisfied by $\la$ satisfying
\be\label{lhh} \suml^K_{k = 1} \la_k (\om^k\cdot \ue) = 0. \ee

As obtained, $\ue, \la$, and the matrices $U^k$ used in \eq{ar} depend on $X^1_l$.  In some examples, however, coinciding values of $\ue, \la, U^k$ are obtained for multiple values of $l$.

\subsection{Alternative coupling}

In the present framework, coupling of the given constituent systems $\Si^k(z^k), \; \; k = 1, \cdots, K$, to obtain $\Si(z)$ is obtained through the mappings
\be\label{lla} \bar z \to z \ee
as in \eq{lba}, \eq{lbc} above, and
\be\label{llb} \bar\psi(\bar z) \to \psi(z)\ee
as in \eq{lbe} above.

Sufficient conditions, \eq{ldc}, \eq{ldd} are obtained that there exists a mapping
\be\label{llc} (\bar Z, \bar \om, X) \to (Z,\om, X),\ee
as expressed above in \eq{lca}, \eq{lcb}, such that the conditions \eqref{ba}, \eq{bu} apply to $\Si(z)$.

Such preserves the gradient form \eq{bt}, the existence of an entropy extension $q(a)$,  and hyperbolicity \eq{ljc}, if present for the constituent systems.

Constituent $Z$-systems $\Si^k(z^k)$ are not required, at the expense of the potentially problematic condition \eqref{lah}.  The other qualifications \eq{laa}, \eq{lab} are made for simplicity.

An alternative coupling strategy, well-known in the application to two-fluid or multi-fluid flow, is available when the given constituent systems are $Z$-systems satisfying \eq{ca}, \eq{cc}.  A common expression for $\zeta (z)$ is assumed, and associated equations of state \eq{me}
\be\label{lma} \zeta (z^k) = \sigma_k (\xi^k(z^k), \; z^k_\zeta(z^k)), \; \; \, k = 1, \cdots, K,\ee
not necessarily coinciding but each satisfying \eq{mf}, \eq{mga}.

With this alternative strategy the condition \eq{lla} is simply
\be\label{lmg} z = \bar z;\ee
the system $\Si(z)$ is of dimension
\be\label{lmb} \bar n  = Km.\ee

Coupling is effected through a linear condition on the $\xi^k(z^k)$, \  $k = 1,\cdots, K$, equivalently on the $\zeta(z^k)$.

For some matrix
\be\label{lmc} B \in M^{K\times K},\ee
presumably chosen on physical grounds, the condition \eq{llb} now assumes the form
\begin{align}\label{lmd} \psi_i (z) &= \suml^K_{k=1} \xi^k (z^k) \; \, \big(\suml^K_{l=1} \; B_{kl}\;  \zeta_{z_i} (z^l)\big)\nonumber \\
&= \suml^K_{k=1} \;  \big(\suml^K_{l=1} \; B_{lk} \; \xi^l(z^l)\big) \zeta_{z_i} (z^k), \; \; i = 1, \cdots, m.\end{align}

The system $\Si(z)$ is obtained by application of \eq{bt} to \eq{lmd}, using \eq{lmg}, \eq{lac}. The system $\Si(z)$ is conveniently regarded as an assembly of $K$ coupled subsystems $\tilde \Si^k(z)$, each of the form \eq{aa}, \eq{ab}, \eq{ac}, \eq{bt}.  The corresponding density/flux functions are
\begin{align}\label{lme} \tilde a^k_{ij} (z) &= \psi_{i, z^k_j} (z)\nonumber \\
&= \xi^k_{z^k_j} (z^k) \; \big(\suml^K_{l = 1} \, B_{kl} \zeta_{z_i} (z^l)\big) + \big( \suml^K_{l = 1} \; B_{lk} \, \xi^l(z^l)\big) \, \zeta_{z_i z_j} (z^k)\end{align}
for $i, j = 1, \cdots, m; \; \, k = 1, \cdots, K$, with the $\xi^k(z^k), \; \xi^k_{z_j^k} (z^k)$ obtained using the given equations of state \eq{lma}.

The entropy density/flux  for $\Si(z)$ is obtained from \eq{lmd}, \eq{lac}, \eq{bs}
\begin{align}\label{lmf} \tilde q_i (\tilde a) =& \suml^K_{k = 1} \big( (\suml^m_{j = 1} z^k_j \xi^k_{z^k_j} (z^k) - \xi^k(z^k)\big) \big( \suml^K_{l = 1} \, B_{kl} \, \zeta_{z_i} (z^l)\big) \nonumber \\
&+ \big( \suml^K_{l = 1} \, B_{lk} \, \xi^l(z^l)\big) \; \big( \suml^m_{j = 1} \, z^k_j \, \zeta_{z_iz_j} (z^k))\big), \; \; i = 1,\cdots, m.\end{align}

Retaining the qualification \eq{laa}, as \eq{cu} applies to each of the $\xi^k(z^k)$, the relations \eq{llc} are now trivial
\be\label{lmh} Z = \bar Z, \; \; \om = \bar\om;\ee
the condition \eq{lah} is vacuous because of the assumed common form of $\zeta(z)$.

Hyperbolicity of each of the constituent systems $\Si^k(z^k)$ trivially implies hyperbolicity of $\bar \Si (\bar z)$, which coincides with $\Si(z)$ for the special case $B~=~I_K$.  Thus a sufficient general condition for preservation of hyperbolicity is $ \| B - I_K\|$ sufficiently small.

Alternatively, the case of $\zeta(z)$ as in \eq{eab}, with $e_H = \hat e_1$ in \eq{btc}, hyperbolicity of each $\Si^k(z^k)$ is equivalent to $\xi^k$ strictly convex in $z^k$. Then using \eq{lmd} with $i = 1$, hyperbolicity preservation results from
\be\label{lmi} B_{kl} \ge 0, \; \; \, k, l = 1, \cdots, K\ee
in \eq{lmc}.

With this coupling strategy, the resulting gradient form \eq{lmd}, existence of an entropy extension \eq{lmf} and hyperbolicity preservation are achieved at the expense of linear combinations of the $\zeta_{z_i} (z^l) $ as well as of the $\xi^l(z^l)$ in \eq{lme}.  Interpretation of the resulting system $\Si(z)$ may be affected.

As an illustrative example, we consider coupling of two isentropic Euler systems \eq{eaa}, \eq{eab}, \eq{eac} with $ m = 3$, corresponding to
\be\label{lpa} K = 2, \; \, \bar n = 6\ee
in \eq{lmb} and thereafter.

Familiar notation is recovered with abbreviations
\be\label{lpb} u_k \, \eqadef\, z^k_2, \; \, v_k\, \eqadef \, z^k_3, \; \; P_k \, \eqadef \, \xi^k, \; \; H_k \, \eqadef\, \sigma_k, \; \; k = 1, 2 \ee
and densities
\be\label{lpc} \rho_k \, \eqadef \; \frac{1}{\sigma_{k, \xi^k} (\xi^k)} \, = \, \frac{1}{H_{k, P_k} (P_k)} , \; \; \, k = 1, 2.\ee

Throughout we have tacitly assumed that the elements of the matrix $B$ preclude block partitioning.  The value of rank $B$ remains arbitrary, and determines the number of free parameters in the resulting system $\Si(z)$. Here we arbitrarily choose
\be\label{lpd} \hbox{ rank \ } B = 1, \; \; \; B = \begin{pmatrix} \alpha \beta \; \; \frac{\alpha}{\beta}\\ \frac{\beta}{\alpha} \; \; \frac{1}{\alpha \beta} \end{pmatrix} , \; \; \alpha, \beta > 0.\ee

Then from \eq{lmd}, \eq{lpd}, \eq{lpb}, for $\Si(z)$
\be\label{lpe} \psi(z) = \begin{pmatrix} (\beta + \tfrac 1 \beta) \; (\alpha P_1 + \tfrac 1 \alpha P_2) \\ (\beta u_1 + \tfrac 1 \beta u_2) \; (\alpha P_1 + \tfrac 1 \alpha P_2)\\
(\beta v_1 + \tfrac 1 \beta v_2) \; (\alpha P_1 + \tfrac 1 \alpha P_2)\end{pmatrix} .\ee

The two subsystems $\tilde \Si^k(z), \; k = 1, 2$, are of the same form, but employ velocity component averages
\be\label{lpf} \tilde u \, \eqadef \, \frac{\beta u_1 + \frac{1}{\beta} u_2}{\beta + \frac{ 1}{ \beta}} , \quad
\tilde v \, \eqadef \, \frac{\beta v_1 + \frac{ 1}{ \beta} v_2}{\beta + \frac{ 1}{ \beta}},\ee
and pressures coinciding only in the special case $\alpha = \beta =1$,
\be\label{lpg} \tilde P^1 \; \eqadef \, \frac{P_1 + \frac{1}{\alpha^2} P_2}{1 + \frac{1}{\beta^2}}
 , \; \; \;  \tilde P^2 \, \eqadef \, \frac{\alpha^2 P_1 + P_2}{\beta^2 + 1}.\ee

Each $\tilde \Si^k(z)$, trivially normalized, is then
\begin{align}\label{lph} \rho_{k, x_1} &+ (\rho_k\tilde u)_{x_2} + (\rho_k \tilde v)_{x_3} = 0 ,\nonumber \\
(\rho_ku_k)_{x_1} &+ (\rho_k u_k \tilde u + \tilde P^k)_{x_2} + (\rho_k u_k \tilde v)_{x_3} = 0 \nonumber \\
(\rho_kv_k)_{x_1} &+ (\rho_k \tilde u v_k)_{x_2} + (\rho_kv_k \tilde v + \tilde P^k)_{x_3} = 0.\end{align}

\section{Dissipation}

As reviewed in \cite{D}, entropy weak solutions $z$ of systems $\Si(z) $ \eqref{aa}, \eq{ab}, \eq{ac}, \eq{bu} are routinely sought as vanishing dissipation limits.  Approximations $z_\delta: \Om \to D, \; \, \delta > 0$, are constructed satisfying a regularized weak form of \eq{aa}, \eq{ac}
\be\label{faa} \intl_{\Om} ( \Si (z_\delta) \theta + \delta \cald (z_\de, \theta)) = 0 \ee
with test functions $\theta $ \eq{bi} and a judiciously chosen dissipation term $\cald$.  For $\cald$ such that
\be\label{fab} \de \cald (z_\de, \cdot) \overset{\de\downarrow 0}{\rhd} 0 \ee
in the sense of distributions, any sequence $\{ z_\de\} $ satisfying
\be\label{fac} z_\de \overset {\delta \downarrow 0}{\longrightarrow} z \ee
almost everywhere in $\Om$ implies
\be\label{fad} a_{ij} (z_\delta) \rhd a_{ij} (z), \; \; \, i = 1, \cdots, m, \; \, j = 1, \cdots, n\ee
weakly in $\Om$, using \eq{aa}, \eq{ab}, \eq{fab}, \eq{fac}, thus $z$ satisfying \eq{aa}, \eq{ac} weakly.

The entropy inequality \eq{bx} follows from selection of $\cald$ such that
\be\label{fae} \cald (z_\de, z_\de) \ge 0 \ee
formally.

Convergence \eq{fac} and the rate thereof, of material interest in the construction of discretization schemes, depend on suitable choice of dissipation term(s).

For systems $\Si(z)$ with a nontrivial symmetry group $\cals_R$ \eq{ad}, \eq{ae}, \eq{af}, \eq{ba}, the selection of dissipation term is (partially) directed by the observation that \eq{fac} implicitly implies
\be\label{fba} \calt_\cale (z_\de, \Om) \overset{\de\downarrow 0}{\longrightarrow} \calt_\cale (z, \Om)\ee
at least pointwise with respect to $\tau$ in \eq{af}, thus $z_\de $ satisfying
\be\label{fbb} z^*_\de = Zz_{\de} + \om \ee
in \eq{ba}.

For simplicity, attention is here restricted to the case
\be\label{fbc} c_Z = 0\ee
in \eq{bg}, \eq{bh}, \eq{bu}.

Then retaining \eq{bl} by convention,
\be\label{fbd} (\Si(z)\theta)^* = 0 \ee
for any $\cale \in \cals_R$, $z$ satisfying \eq{aa}.

Using \eq{faa}, a sufficient condition for \eq{fba} (and necessary for uniform convergence with respect to $\tau$) is a dissipation term satisfying
\be\label{fbe} (\cald (z_\de, \theta))^* = 0\ee
as determined using \eq{fbb}, \eq{bl}.

For $Z$-systems $\Si(z)$, such dissipation terms are readily constructed, independently of the related $\{ (Z, \om)\}_0$.  Using \eq{ca}, we choose
\be\label{fca} \cald (z_\de, \theta) \, \eqadef\, \big( \suml^n_{i,j = 1} \, A_{ij} (z_{\de_j, x_i} + z_{\de i, x_j} )\big) \big(\suml^n_{i, j = 1} \, A_{ij} (\theta_{j, x_i} + \theta_{i, x_j} )\big)\ee
for $A \in M^{n\times n}$, satisfying \eq{fae} by inspection.

\begin{thm} Assume the matrix $A$ symmetric and a system $\Si(z)$ satisfying \eq{ch}.

Then \eq{fbe} holds.
\end{thm}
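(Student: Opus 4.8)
The plan is to exploit that $\cald(z_\de,\theta)$ in \eqref{fca} is a product of two scalars, each a contraction of the symmetric matrix $A$ against a ``symmetrized gradient,'' and that the star operation obeys the Leibniz rule (exactly as used in passing to \eqref{bk}). Writing $\cald = F(z_\de)\,F(\theta)$ with $F(w) \eqadef \sum_{i,j}A_{ij}(w_{j,x_i}+w_{i,x_j})$, the product rule gives $(\cald)^* = (F(z_\de))^*\,F(\theta) + F(z_\de)\,(F(\theta))^*$, so it suffices to prove $(F(\theta))^* = 0$ and, identically, $(F(z_\de))^* = 0$. The two factors are handled the same way: by \eqref{fbb} the field carries the variation $z_\de^* = Zz_\de+\om$, which differs from the convention $\theta^* = Z\theta$ of \eqref{bl} only by the constant $\om$, and $\om$ is killed by the $x$-differentiation inside $F$.

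The key step is to record how the symmetrized gradient transforms. Introduce $D(w)$ with entries $D_{pq}(w) = w_{p,x_q}+w_{q,x_p}$, so that $F(w) = \mathrm{tr}\big(A\,D(w)\big)$ since $D(w)$ is symmetric. Applying \eqref{bc} together with $\theta^*=Z\theta$ to each mixed partial, the component index of $w_{p,x_q}$ is rotated by $Z$ while the $x$-derivative index is acted on by $-X$; substituting $X=-Z^\dag$ from \eqref{ch} turns the latter into $+Z^\dag$. Collecting the four resulting terms and relabelling indices—the symmetrized analogue of the manipulation carried out in \eqref{bm}—one obtains the clean tensorial law $D(w)^* = Z\,D(w) + D(w)\,Z^\dag$; that is, $D(w)$ transforms as a symmetric contravariant $2$-tensor for the flow generated by $Z$.

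Granting this law, the star of a factor is immediate: using cyclicity of the trace,
$(F(w))^* = \mathrm{tr}\big(A\,D(w)^*\big) = \mathrm{tr}\big(AZ\,D(w)\big) + \mathrm{tr}\big(A\,D(w)\,Z^\dag\big) = \mathrm{tr}\big((AZ+Z^\dag A)\,D(w)\big)$.
Because $D(w)$ is symmetric, the matrix $AZ+Z^\dag A = AZ + (AZ)^\dag$ that appears is precisely (twice) the symmetric part of $AZ$. Hence if $AZ$ is antisymmetric, both $\mathrm{tr}\big((AZ+Z^\dag A)D(\theta)\big)$ and $\mathrm{tr}\big((AZ+Z^\dag A)D(z_\de)\big)$ vanish, giving $(F(\theta))^* = (F(z_\de))^* = 0$, and then $(\cald)^*=0$ follows from the product rule, establishing \eqref{fbe}.

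The main obstacle is exactly this last vanishing: one must see that the symmetry of $A$ together with $X=-Z^\dag$ forces $AZ+Z^\dag A = 0$. Symmetrizing the gradient has already discarded the antisymmetric part of $AZ$, so what remains is to confirm that the symmetric part of $AZ$ is absent, i.e. that $AZ$ is antisymmetric. I expect this to rest on the dissipation tensor $A$ being adapted to the symmetry group rather than merely symmetric: for the generators constructed in \eqref{he} one has $WZ$ antisymmetric for the symmetric matrix $W$ of \eqref{hba}, so taking $A=W$ makes $AZ=WZ$ antisymmetric for every $Z\in\{Z\}_{WY}$ and the residual drops out. I would therefore either read the hypothesis ``$A$ symmetric'' as encoding this compatibility, or add $AZ+Z^\dag A=0$ (equivalently $e^{\tau Z^\dag}A\,e^{\tau Z}=A$ for all $\tau$) explicitly; once it is in force, the computation above is forced and the theorem follows.
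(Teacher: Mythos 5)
Your computation is, line for line, the paper's computation: the paper likewise applies the Leibniz rule to the two factors of \eqref{fca}, uses \eqref{bl} and \eqref{bc} on each symmetrized gradient, and arrives at $\bigl(\sum_{i,j}A_{ij}(\theta_{j,x_i}+\theta_{i,x_j})\bigr)^{*}=\sum_{i,j}\tilde A_{ij}\theta_{j,x_i}$ with $\tilde A = Z^\dag A + AZ - AX^\dag - XA$, which under \eqref{ch} is exactly your $2\bigl(AZ+Z^\dag A\bigr)$. Where the two arguments diverge is the final step. The paper disposes of $\tilde A$ by asserting that it is symmetric (because $A$ is) and \emph{also} antisymmetric for all $X,Z$ satisfying \eqref{ch}, hence zero. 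You instead observe that substituting $X=-Z^\dag$ gives $\tilde A = 2\bigl(AZ+(AZ)^\dag\bigr)$, which is manifestly symmetric, so the asserted antisymmetry is not a consequence of $A=A^\dag$ and \eqref{ch} alone: it is literally the identity $AZ+Z^\dag A=0$, i.e.\ antisymmetry of $AZ$, that you propose to adjoin as a hypothesis. Your trace computation then closes the proof under that hypothesis.

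So you have reproduced the paper's route and put your finger on precisely the step the paper passes over without justification; the ``antisymmetry of $\tilde A$'' invoked there is exactly the compatibility condition between the dissipation tensor $A$ and the generators $Z$ that you identify. Your diagnosis that this is a genuine restriction is supported by simple cases: for a rank-one (Galilean-type) generator $Z=\hat e_1 b^\dag$, antisymmetry of $AZ=(A\hat e_1)b^\dag$ forces $A\hat e_1=0$, and for antisymmetric (rotational) $Z$ it requires $A$ to commute with $Z$; neither follows from symmetry of $A$. Your suggested repair --- requiring $AZ$ antisymmetric for every relevant $Z$, as holds for $A=W$ with $Z\in\{Z\}_{WY}$ by \eqref{he} --- is the natural sufficient condition, and with it in force your argument is complete. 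In short: same decomposition, same key matrix, but you have correctly located a gap that the paper's own proof does not fill.
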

\begin{rem*} Familiar examples include bulk  viscosity, shear viscosity and heat conduction for fluid flow models.

In view of \eq{fca}, the assumption of the matrix $A$ symmetric is without loss of generality.
\end{rem*}

\begin{proof}  Using \eq{bl}, \eq{bc}, for \eq{fca} we readily compute
\be\label{fcb} \big( \suml^n_{i,j = 1} A_{ij} (\theta_{j, x_i} + \theta_{i, x,_j} )\big)^* \, = \, \suml^n_{i, j = 1} \tilde A_{ij} \theta_{j, x_i}\ee
with $\tilde A \in M_{n\times n} $ depending on $X, Z$,
\be\label{fcc} \tilde A = Z^\dag A + AZ - AX^\dag - XA.\ee

With $A$ symmetric, $\tilde A$ is also symmetric by inspection.

However for all $X, Z$ satisfying \eq{ch}, $\tilde A$ is also antisymmetric, establishing
\be\label{fcd} \tilde A = 0\ee
independently of such $X, Z, \om$.

Using \eq{fbb}, the same argument applies to
\begin{equation*} \suml^n_{i, j = 1} A_{ij} (z_{\de j, x_i} + z_{\de i, x_j} ),
\end{equation*}
thus verifying \eq{fbe}.
\end{proof}

\section{Main theorems}

The above discussion admits alternative interpretation as part of an attempt to establish the existence of a vanishing dissipation limit \eq{faa}, \eq{fab}, \eq{fac} in the set of admissible weak solutions of systems $\Si(z)$ \eq{aa}, \eq{ab}, \eq{ac} with $m \ge 3, \, n \ge 2$.  Such interpretation derives from three tacit assumptions above.

First, that there exist sufficient structural conditions determining a (non-empty)  class of systems $\Si(z)$ within which a vanishing dissipation limit exists for judiciously selected domain and boundary data.

Second, that aside from value(s) of $n$ depending on $m$, such structural conditions may be expressed independently of $m = 2, 3, \cdots$.

Third, that such structural conditions are compatible with existence of an entropy extension \eq{bn}, \eq{bo}, \eq{bp} and a nontrivial symmetry group $\cals_R$ as determined from \eq{ba}, \eq{bg}.

Any class of systems $\Si(z)$ satisfying such conditions must admit  nontrivial solutions of \eq{bfb}, \eq{bg}, equivalently of \eq{bu} using \eq{bt}.  Such implies a linear relationship among $X,Z,\om, c_Z$ in \eq{ba}, \eq{bg}, depending on $m, n$ but independent of $z \in D$, and of $ \Si(z)$ within the specified class of systems.

The postulated class of $Z$-systems as obtained above, without restriction on the specific form of the symmetries present in $\cals_R$, is perhaps the most natural example of such a class of systems.  In particular, the condition \eq{ca} follows by attention restricted to reduced systems $\Si(z)$ as necessary, obtained by application of \eq{ai} for $m > n$ or \eq{dh} for $m < n$.

The two conditions \eq{cc}, \eq{ch} depend on \eq{ca}, and are closely related.  Compatibility thereof follows from \eq{bb} with $g(z) = \zeta(z)$, using \eq{bu}.  The condition \eq{cc} may be regarded as extension of the case $m = n = 2$, using theorem 2.1. The condition \eq{ch} then is obtained in theorem 2.2, and additionally implies \eq{bg} separately for the symmetric and antisymmetric parts of the $n$-matrix function $a(z)$ in \eq{ab}.

Possible additional systems as described in corollary 2.3 have been neglected as requiring additional restriction on the form of $\cals_R$.

Within the class of $Z$-systems, the condition \eq{so} expressing maximal symmetry may be  obtained by successive reduction of $m, n$, using  \eq{ai}, \eq{dh} as required to achieve
\be\label{taa} \La^\perp_\Si = \{ 0 \}\ee
in \eq{hlb}.  In the absence of \eq{taa}, dependence of systems $\Si(z)$ on dependent variables $e \cdot z, \; e \in \La^\perp_\Si$, remains arbitrary.

The class of $Z$-systems satisfying \eq{so} is nonempty, containing the (isentropic or extended) Euler systems.

For this class, attractive conditions on coupling and on dissipation, compatible with the symmetry group, are obtained in sections 3, 4, respectively.

In higher dimensions $m \ge 3$, hyperbolicity has been associated (albeit crudely) \cite{S2} with existence of weak solutions of attractive form for approximation. The class of hyperbolic $Z$-systems,  however, proves to be restricted to Euler systems under seemingly mild conditions.
More exotic structural conditions will apparently be required of additional system classes satisfying the above tacit assumptions.  To this extent we are corroborating the conjectured association of existence of a vanishing dissipation limit in higher dimensions with Euler and related systems.

We denote by $\Theta^{n, L}$ the set of $Z$-systems with
\be\label{tac} \rank \; a(z) = n\ee
for all $z \in D$ in \eq{ab}, with an associated equation of state \eq{me} satisfying \eq{mf}, \eq{mga}, and \eq{sp} by convention.

Elements of each $\Th^{n, L}$ additionally satisfy
\be\label{tab} L > 0\ee
in \eq{gab}, are  hyperbolic by virtue of \eq{btb}, \eq{btc}, with a time-like direction $e_H$ satisfying \eq{btd}, and for which there exists $(Z_\zeta, \om_\zeta)$ satisfying \eq{sb}, \eq{cxa}, \eq{cxb}.  Such $\Theta^{n, L} $ may be regarded as equivalence classes under transformations $\calt_{QU}$ \eq{cjz}, and such transformations are assumed applied as required throughout.

The association of classes $\Th^{n,n}$ with Euler systems is articulated in the following four theorems.

\begin{thm} Denote by $\ul{\Th}^{n,n} , \; n = 2, 3, \cdots $, the classes of systems $\Si(z)$ satisfying additionally
\be\label{tba} \dim \, \La^I_\Si = 0 \ee
in \eq{sza}, \eq{szc}, satisfying \eq{sp} and such that
\be\label{tbb} \calt_{\hat e_n, c_e} \, \Si(z) = \Si'(z') (Q'')^{-1}\ee
for almost all $c_e \in \bbr, \; \Si' (z') \in \ul{\Th}^{n-1, n-1}, \; Q'', U'' $ (of dimension $n-1$ which may depend on $c_e$) satisfying \eq{cjb}.

Then $\ul{\Th}^{n,n}$ are the isentropic Euler systems determined from \eq{eaa}, \eq{mea}.
\end{thm}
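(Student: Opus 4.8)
The plan is to induct on $n$, using the recursive hypothesis \eq{tbb} to express each member of $\uTh^{n,n}$ as an inverse reduction of a member of $\uTh^{n-1,n-1}$, which by induction is already identified. I would anchor the induction at $n=2$, treated directly: for $n=2$ the recursion \eq{tbb} descends to a one-dimensional (scalar) system and so imposes no constraint, and the conclusion must instead be extracted from the remaining structural data. A member of $\uTh^{2,2}$ satisfies \eq{cc}, \eq{ch}, carries an equation of state \eq{me} with \eq{mf}, is hyperbolic with $e_H=\hat e_1$ by \eq{sp}, has $\rank a(z)=2$ and maximal richness $L=2$ with $\dim\La^I_\Si=0$ and $\dim\{(Z,\om)\}_\zeta=1$. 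Since $L=2=n$ forces $\La^\perp_\Si=\{0\}$ and $\dim\La^I_\Si=0$ forces $\La_\Si=\La^V_\Si=\bbr^2$, I would integrate the symmetry relations \eq{ci}, \eq{cj} for the two independent generators (one in $\{(Z,\om)\}_0$ preserving $\zeta$, one generating $\{(Z,\om)\}_\zeta$ and scaling $\zeta$ per \eq{cxb}) and use the freedom \eq{cd} together with a $\calt_{QU}$ normalization \eq{cjz}, \eq{cjb} to bring $\zeta$ to the canonical form $z_1+\tfrac12 z_2^2$. Because $\dim\La^I_\Si=0$ leaves no surviving invariant variable $z_\zeta$ in \eq{me}, the equation of state collapses to the form \eq{mea}, matching \eq{eaa}--\eq{eac}.

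For the inductive step, assume $\uTh^{n-1,n-1}$ is exactly the family of $(n-1)$-dimensional isentropic Euler systems, and take $\Si(z)\in\uTh^{n,n}$. By \eq{tbb}, for almost every $c_e$ the reduced system $\calt_{\hat e_n,c_e}\Si(z)$ agrees, up to a similarity \eq{ar} with $U=Q^{-\dag}$ \eq{cjb}, with the $(n-1)$-dimensional isentropic Euler system. Reading this through \eq{uab} and \eq{cja}, every slice $z'\mapsto\zeta\binom{z'}{c_e}$ coincides, after the linear change of variables induced by $Q''$ and the reparametrization freedom \eq{cd}, with the canonical form $z'_1+\tfrac12\sum_{j=2}^{n-1}z'^2_j$. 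I would then reassemble these slices into a single $\zeta(z)$ on $D$, show it equals $z_1+\tfrac12\sum_{j=2}^{n}z_j^2$ up to \eq{cd} and a $\calt_{QU}$ normalization, and conclude via the construction of theorem 2.5 (as specialized in \eq{eaa}--\eq{eac}, \eq{sl}) that $\Si(z)$ is the $n$-dimensional isentropic Euler system.

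The main obstacle is this reassembly. A priori the similarity $Q''$ and the function $f$ in \eq{cd} used to normalize each $c_e$-slice may depend on $c_e$, so the slices need not glue into a smooth $\zeta(z)$ of the asserted form. I would control this using the data held fixed across the hierarchy: the time-like direction $e_H=\hat e_1$ \eq{sp}, \eq{btd}, which is common to $\Si(z)$ and its reductions; the single scaling direction $(Z_\zeta,\om_\zeta)$ of \eq{sb}, \eq{cxb}, which for the Euler slices is pinned to $(0,\hat e_1)$ as in \eq{sl}; and the compatibility \eq{uac}, \eq{uae} of the admissible $\calt_{QU}$ with fixing the last coordinate. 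Together these should let me choose the normalizing transformations independent of $c_e$, reducing the problem to determining the $z_n$-dependence of $\zeta$ alone.

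Finally, the condition $\dim\La^I_\Si=0$ \eq{tba} is what fixes that remaining dependence and selects the isentropic rather than the energy-conserving branch. For the closed, energy-conserving extended Euler system one has instead $\dim\La^I_\Si=1$, with $\La^I_\Si=\mathrm{span}\{\hat e_n\}$ as in \eq{sq}; imposing $\dim\La^I_\Si=0$ forbids $\hat e_n$ from being such an invariant direction and hence demands a symmetry in $\{(Z,\om)\}_0$ that moves $z_n$ nontrivially. Combined with the slice data and the smoothness \eq{ccz}, this forces the missing term $\tfrac12 z_n^2$ and rules out a $z_n$-dependent equation of state, completing $\zeta$ to the full isentropic-Euler quadratic. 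The rank hypothesis $\rank a(z)=n$ \eq{tac} and \eq{bt} then upgrade the pointwise conclusion on $\zeta,\xi$ to the global identification of $\Si(z)$.
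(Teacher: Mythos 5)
Your overall strategy coincides with the paper's: induction on $n$, a direct analysis of the base case $n=2$ (the paper's lemma 6.2, which classifies $\Th^{2,2}$ by showing $\rank Z=1$, reducing to two canonical forms of $Z$, solving \eqref{gdb} for $\zeta$, and discarding the non-hyperbolic branch $\zeta=z_2e^{-z_1}$), followed by an inductive step in which the $c_e$-slices of $\zeta$ are matched to the $(n-1)$-dimensional Euler form and reassembled, with \eqref{tba} forcing $\hat e_n\in\La^V_\Si$ and hence the quadratic term $\tfrac12 z_n^2$ and the collapse of the equation of state to \eqref{mea} via \eqref{sc}. Your reading of the role of $\dim\La^I_\Si=0$ in selecting the isentropic rather than the energy-conserving branch is exactly the paper's.

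The one step where your proposal is not adequate as written is the reassembly. You correctly identify that $Q''$ and the reparametrization $f$ in \eqref{cd} may depend on $c_e$, but the anchors you propose --- $e_H=\hat e_1$, the pinned $(Z_\zeta,\om_\zeta)=(0,\hat e_1)$ of \eqref{sl}, and the compatibility \eqref{uac}, \eqref{uae} --- only constrain the $\hat e_1$ direction. They do not exclude a $c_e$-dependent rotation or Galilean shift acting on $z_2,\dots,z_{n-1}$, which would let $\zeta$ take a form such as $z_1+\tfrac12\sum_{j=2}^{n-1}(z_j-v_j(z_n))^2+\phi_n(z_n)$ with nonconstant $v_j$, i.e.\ with cross terms $z_jz_n$ that must be ruled out before you can say ``the $z_n$-dependence of $\zeta$ alone'' remains. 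The paper closes this by a different mechanism (lemma 6.6): writing $(z_1,\dots,z_{n-1})^\dag=Q''(z_n)z'$ and demanding that this relation be invariant under the full group $\{(Z,\om)\}_0$ gives $(Q''(z_n))^*=0$; since \eqref{tba} and $L=n$ force $\hat e_n\in\La^V_{\Si(z)}$, so that $z_n^*$ does not vanish identically, $Q''$ must be constant, and only then do the vanishing mixed second derivatives \eqref{ghj} and the form \eqref{ghk} follow. You would need to import this symmetry-invariance argument (or an equivalent one) to make your reassembly go through; the rest of your outline then matches the paper.
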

An example satisfying \eq{tba} but not \eq{tbb} is the system with $n = 4$ determined from \eqref{mea} and
\be\label{gex} \zeta(z) = z_1 + \tfrac 12 z^2_2 + (z^2_3 + z^2_4)^2.\ee

\begin{thm} Denote by ${\bar \Th}^{n, n}, \; n = 3,4,\cdots$, the classes of systems $\Si(z)$ satisfying
\be\label{tbc} \dim \, \La^I_\Si = 1, \; \; \La^I_\Si = span \{ \hat e_n\},\ee
with associated equation of state \eq{me}, with
\be\label{tbd} z_\zeta (z) = z_n, \; \; \, e_{z_\zeta} = \hat e_n\ee
in \eq{bwa}, and satisfying \eq{sb} and  \eq{tbb} with $c_e$ satisfying \eq{dha} and
$\Si'(z') $ an isentropic Euler system of dimension $n-1$.

Then $\bar{\Th}^{n, n}$ are the extended Euler systems \eq{ecc}, \eq{ecd}, \eq{ece}, possibly with additional additive terms $(a_{nj} (z))_{x_n}, \; \, j = 1, \cdots, n$.
\end{thm}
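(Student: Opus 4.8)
The plan is to reconstruct the full system from its $(n-1)$-dimensional slices, using the reduction \eqref{tbb} to fix the slice structure and the symmetries singled out by \eqref{tbc}, \eqref{sb} to determine the dependence on $z_n$. First I would fix a generic $c_e$ with $c_e\neq 0$ (allowed by \eqref{dha}) and apply \eqref{tbb}: the slice $\calt_{\hat e_n,c_e}\Si(z)$ is, up to $\calt_{QU}$, an isentropic Euler system of dimension $n-1$. By the isentropic characterization (Theorem 5.1, or directly \eqref{eaa}--\eqref{eac}) together with the slice formulas \eqref{uab}, this determines $\zeta\binom{z'}{c_e}$ and $\xi\binom{z'}{c_e}$ as functions of $z'=(z_1,\dots,z_{n-1})$, modulo the $c_e$-dependent gauge freedoms \eqref{cd}, \eqref{cja}.

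I would then tie the slices together using the two distinguished symmetries. Normalizing by $\calt_{QU}$ so that the generator of the one-dimensional space \eqref{sb}, \eqref{cxa} acts as in \eqref{sr}, relation \eqref{cj} gives $\zeta_{z_1}(z)\,z_n$ constant, hence $\zeta(z)=-z_1/z_n+g(z_2,\dots,z_{n-1},z_n)$; in particular $\zeta$ is affine in $z_1$, which forces each slice reparametrization in \eqref{cd} to be affine. Because $\dim\La^I_\Si=1$ confines the invariant direction to $\hat e_n$, leaving $z_2,\dots,z_{n-1}\in\La^V_\Si$, the rotation symmetries of the isentropic slices lift to elements of $\{(Z,\om)\}_0$ acting orthogonally on these components; by \eqref{cu} they annihilate $g$, and combined with the isentropic form of the slice this forces $g(z_2,\dots,z_{n-1},c_e)=\tfrac12\,w(c_e)\sum_{j=2}^{n-1}z_j^2$ up to an additive constant, for some $w(c_e)>0$.

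The decisive step is to pin down the cross-slice function $w(z_n)$, and here the Galilean boosts do the work. By \eqref{tbb} each isentropic slice carries boost symmetries, which lift through \eqref{cjf}--\eqref{cjh} to full-system elements $(Z,\om)\in\{(Z,\om)\}_0$; matching the lifts across the continuum of admissible $c_e$ fixes their coupling to $z_n$, forcing the velocity shift to enter through the off-diagonal block $Z_{jn}$ (with $\om_j=0$) rather than as a constant $\om_j$. Imposing $\zeta^*=0$ from \eqref{cu} on such a boost and collecting the coefficient of each $z_k$ yields the identity $1/z_n-w(z_n)z_n=0$, hence $w(z_n)=C/z_n^2$; after a final $\calt_{QU}$ rescaling this is exactly the extended-Euler potential \eqref{ebd}, extended to dimension $n$ as in \eqref{eca}, and equivalently the system is closed in the sense of Definition 2.8. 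The equation-of-state hypothesis \eqref{me} with $z_\zeta=z_n$ from \eqref{tbd}, matched on each slice to \eqref{eac}, then recovers $\sigma=G(P,T)$ and $\xi=P/T^2$, and differentiating $\psi=\xi\zeta_z^\dag$ through \eqref{cc}, \eqref{bt} reproduces the densities and fluxes \eqref{ecc}, \eqref{ecd}, \eqref{ece}. Since $\calt_{\hat e_n,c_e}$ discards both the equation $\Si_n$ and every $x_n$-flux, the data in the $x_n$-direction are constrained only through these structural relations, and whatever survives is precisely the additive freedom $(a_{nj}(z))_{x_n}$ recorded in the conclusion.

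The main obstacle is this determination of $w(z_n)$, that is, fixing the boost's coupling to $z_n$. The per-slice data fix the isentropic form only up to the $c_e$-dependent gauges \eqref{cd}, \eqref{cja}, and $w(c_e)$ is itself gauge on a single slice; the content is to show these gauges vary compatibly in $c_e$ and to rule out the competing lift in which the boost enters through a constant $\om_j$, which would instead give $w\propto 1/z_n$ and a non-Euler system. I expect the essential ingredients there to be the affineness of the reparametrizations established above, the equation-of-state invariance \eqref{mga} together with $\xi^*=0$ from \eqref{mi} on the lifted boosts, and the exact-dimension hypotheses \eqref{tbc}, \eqref{sb}; the remaining care is the honest accounting of which $x_n$-direction functions survive as the $a_{nj}$ terms.
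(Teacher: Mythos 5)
Your overall strategy coincides with the paper's: reduce along $x_n$ to isentropic Euler slices, lift the slice rotations to force $\zeta(z)=\phi_1(z_n)z_1+\tfrac12\phi_2(z_n)\sum_{j=2}^{n-1}z_j^2+\phi_n(z_n)$, and then use the lifted Galilean boosts to determine the $z_n$-dependence, with the residual $\phi_n(z_n)$ accounting for the extra $(a_{nj}(z))_{x_n}$ terms. But the step you yourself flag as "the main obstacle" --- ruling out the competing Galilean lift in which the boost enters through a constant $\om_j$ rather than through $Z_{jn}$ --- is exactly the content of the theorem, and you leave it at "I expect the essential ingredients to be\ldots" without an argument. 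The same unproved dichotomy is smuggled in earlier when you "normalize by $\calt_{QU}$ so that the generator of \eqref{sb} acts as in \eqref{sr}": a transformation \eqref{ar} sends $(Z_\zeta,\om_\zeta)$ to $(QZ_\zeta Q^{-1},Q\om_\zeta)$, so it cannot convert the isentropic-type generator $(Z_\zeta=0,\ \om_\zeta=\hat e_1)$ of \eqref{sl} into the form \eqref{sr} with $Z_\zeta\neq 0$; assuming \eqref{sr} at the outset presupposes which branch of the dichotomy holds, which is then what makes your computation $\zeta_{z_1}z_n=\mathrm{const}$ and hence $w(z_n)=C/z_n^2$ come out to extended Euler. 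Without that, your own analysis admits the alternative $w\propto 1/z_n$... i.e.\ the systems of Theorem 5.4, and nothing in the proposal excludes them.

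The paper closes this gap cleanly, and with an ingredient you name but do not deploy: hypothesis \eqref{tbc}. Writing the boost constraint as \eqref{gkb}, the only two consistent lifts are \eqref{gkd} ($Z_{jn}=0$, $\om_j=-Z_{1j}$, $\phi_1=\phi_2$) and \eqref{gke} ($Z_{jn}=-Z_{1j}$, $\om_j=0$, $\phi_1=2z_n\phi_2$). In branch \eqref{gkd} every symmetry in the group has $Z\hat e_n=0$ as well as $Z^\dag\hat e_n=\om\cdot\hat e_n=0$, so by \eqref{szc}, \eqref{szd} $\hat e_n\in\La^\perp_\Si$ and $\La^I_\Si$ is trivial --- contradicting $\dim\La^I_\Si=1$. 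Hence \eqref{gke} holds, which is precisely your "$Z_{jn}$ with $\om_j=0$" coupling and simultaneously justifies the \eqref{sr} normalization. (The paper then pins down $\phi_2=\mathrm{const}$ via the duality $\calt_{q,n,c_e}$ with the entropy-conserving class and the hyperbolicity requirement \eqref{bwd}, whereas your route gets it from $\zeta_{z_1}z_n=\mathrm{const}$; either works once the dichotomy is resolved.) A secondary loose end in your write-up is the compatibility of the slice gauges across $c_e$, which the paper handles by the argument \eqref{ghc}--\eqref{ghf} showing the intertwining matrix $Q''$ must be independent of $z_n$; you note the issue but likewise do not carry it out.
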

\begin{thm} Each class of systems
$\Si(z) \in \Th^{n,n}, \; n = 4, 5, \cdots$, satisfying
\be\label{tbe}  \dim \La^I_\Si = 2, \ee
and satisfying \eq{tbb} for almost all $c_e \in \bbr$ with $\Si'(z')$ an extended Euler system of dimension $n-1$, is empty.
\end{thm}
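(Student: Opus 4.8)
The plan is to run, in reverse spirit, the reconstruction used for Theorems 5.2 and 5.3: assume a system $\Si(z)\in\Th^{n,n}$ with $\dim\La^I_\Si=2$ whose reduction \eqref{tbb} is a dimension-$(n-1)$ extended Euler system, and force a contradiction. First I would extract, via \eqref{tbb} and \eqref{uab}, the structure of the slices $z_n=c_e$: up to the linear change $Q''(c_e)$ permitted by \eqref{cja}, each slice of $\zeta$ and of $\xi$ is that of the dimension-$(n-1)$ extended Euler system. Since that system is closed (Definition 2.8), every slice $\zeta(\cdot,c_e)$ is homogeneous of degree zero in $z'$, so
\be\label{plan-hom}
\suml^{n-1}_{j=1} z_j\,\zeta_{z_j}(z)=0\qquad\text{throughout }D.
\ee

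Second I would locate the reduction direction in the richness decomposition \eqref{sza}--\eqref{szd}. Because $\Si\in\Th^{n,n}$ has $L=n$, \eqref{gab} gives $\dim\La^\perp_\Si=0$, while \eqref{tbe} gives $\dim\La^I_\Si=2$ and $\dim\La^V_\Si=n-2$. The extended Euler image has $\dim\La^I=1$ and $\dim\La^V=n-2$, so the frozen direction $\hat e_n$ can lie neither in $\La^\perp_\Si$ (which is trivial) nor in $\La^V_\Si$ (freezing a $\La^V$ direction would drop $\dim\La^V$ to $n-3$); hence $\hat e_n\in\La^I_\Si$. By the transfer rules \eqref{cjf}--\eqref{cjh} this means every admissible $(Z,\om)$ has vanishing $n$-th row and $\om_n=0$ (consistent with \eqref{dke}, \eqref{di}), while some admissible $Z$ has $Z\hat e_n\neq0$, its nonzero $n$-th column passing into the translation part $\hat\om$ of the reduced group.

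Third comes the decisive step. If $\zeta$ were independent of $z_n$, then $\zeta_z\hat e_n\equiv0$ would exhibit a nonzero vector violating the standing nondegeneracy \eqref{cga}; so $\zeta$ must depend on $z_n$, necessarily through the reparametrization $Q''(z_n)$. I would then show this dependence is spurious: matching the extended Euler equation of state \eqref{ebe} on every slice means the single pair $(\sigma,z_\zeta)$ of $\Si$ must reproduce $\sigma_{EE}$ together with the extended-Euler temperature functional $\ell_{c_e}(z')$ for almost all $c_e$. Imposing \eqref{mga} ($z^*_\zeta\equiv0$) for the slice-preserving symmetries and for the $\La^I$-generator with $Z\hat e_n\neq0$, in the presence of \eqref{plan-hom}, forces $\ell_{c_e}$ to be independent of $c_e$, i.e. $Q''(z_n)$ constant in the relevant directions. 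That returns $\zeta$ to being independent of $z_n$, contradicting \eqref{cga}; equivalently, it collapses $\hat e_n$ into $\La^\perp_\Si$, contradicting $\dim\La^\perp_\Si=0$. Thus the class is empty. (A useful consistency check along the way: a genuinely $z_n$-homogeneous $\zeta$, i.e. a closed dimension-$n$ system, has slices that are \emph{not} homogeneous in $z'$ by the computation underlying \eqref{eda}, so it could only reduce to an isentropic, not an extended, Euler system — another route to the same contradiction.)

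I expect the main obstacle to be precisely the rigidity of $Q''(c_e)$: one must prove that the fixed, $c_e$-independent symmetry group of $\Si$, projected to the slices by \eqref{cjf}--\eqref{cjh}, can coincide with the extended Euler group on a whole one-parameter family of slices only if $Q''$ is rigid. Establishing this — that a one-parameter family of closed extended-Euler slices cannot be glued into a single dimension-$n$ $Z$-system carrying two independent invariant directions while respecting \eqref{cc}, \eqref{ch}, \eqref{ci}, \eqref{cj}, \eqref{plan-hom} and \eqref{cga} — is the substantive part; once it is in hand, the remaining verifications are routine bookkeeping with \eqref{gab} and \eqref{sza}--\eqref{szd}.
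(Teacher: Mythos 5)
Your second step already contains everything needed to finish, but you stop short of the conclusion and instead defer to an unfinished third step. Having placed $\hat e_n \in \La^I_\Si$ and observed that the nonzero $n$-th column of some admissible $Z$ (guaranteed by \eqref{szd}) passes into the translation part of the reduced group via \eqref{cjh}, $\hat\om_i = \om_i + Z_{in}c_e$, you are done: for almost every $c_e$ the reduced system carries a symmetry with $\hat\om \neq 0$, whereas the classification established in the proofs of Theorems 5.2 and 5.4 (the alternative \eqref{gke} leading to \eqref{gkf}) shows that any member of $\Th^{n-1,n-1,\calt}_{I,T}$ with $\dim\La^I = 1$ --- in particular an extended Euler system of dimension $n-1$ --- has $\om$ vanishing identically. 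That is the paper's entire proof of Theorem 5.3: the reduction along an invariant direction converts the surviving column $Z_{\cdot n}$ into a nonzero translation, which is incompatible with the $\om^*$ property of the extended Euler symmetry group.

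The genuine gap is your third step, which is where you actually try to derive the contradiction. You acknowledge that the ``rigidity of $Q''(c_e)$'' --- showing that a one-parameter family of extended-Euler slices cannot be glued into an $n$-dimensional $Z$-system carrying two invariant directions --- ``is the substantive part,'' and you do not supply it. As written, the claim that matching the equation of state \eqref{me} on every slice forces the temperature functional to be independent of $c_e$ is an assertion, not an argument; nothing in the proposal rules out a nontrivial $z_n$-dependence entering through $\sigma$ or $z_\zeta$ rather than through $\zeta$ itself, and the intended contradiction with \eqref{cga} is never actually reached. So the proposal does not prove the theorem. The repair is not to complete step three but to delete it and close the argument at the end of step two, using \eqref{cjh} against the $\om \equiv 0$ characterization of extended Euler systems.
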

This does not preclude $\Th^{n,L}, \, L < n$ satisfying \eq{tbe}.
\begin{thm} Denote by $\overset * \Th{}^{n, n-1}$ the classes of systems $\overset * \Si (z) $ with
\be\label{tbf} \dim \La^\perp_\Si = 1, \; \; \La^\perp_\Si = span \{ \hat e_n\},\ee
satisfying \eq{tbd} and \eq{tbb} with $\Si' (z')$ an isentropic Euler system of dimension $n-1$.

Then $\overset * {{\Th}}{}^{n, n-1} $ are the systems \eq{edd}, \eq{ede}, conserving entropy as opposed to energy, again possibly with additional additive terms $(a_{n_j} (z))_{x_n}$,\linebreak  $j = 1, \cdots, n$.
\end{thm}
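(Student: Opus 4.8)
The plan is to prove both inclusions, viewing $\overset{*}{\Th}{}^{n,n-1}$ as the entropy-conserving mirror of the extended-Euler class $\bar\Th^{n,n}$ of Theorem 5.2, the two being interchanged by the reversible entropy/energy exchange $\calt_{q,n,c_e}$ of \eqref{edc}, \eqref{ube}. For the easy inclusion I would check that the systems \eqref{edd}, \eqref{ede} lie in $\overset{*}{\Th}{}^{n,n-1}$: by construction $\hat z_n=T$ enters $\hat\zeta$ only through the equation of state \eqref{ede}, so $\hat\zeta_{\hat z_n}\equiv 0$ and \eqref{cga} fails with $e_\perp=\hat e_n$, giving $\hat e_n\in\La^\perp_\Si$ and hence \eqref{tbf}; the identification $z_\zeta=\hat z_n$ is \eqref{tbd}; and fixing $T=c_e$ while passing to the $x_n$-stationary reduction collapses \eqref{ede} to an equation of state of the pure form \eqref{mea}, exhibiting $\calt_{\hat e_n,c_e}\overset{*}{\Si}$ as an isentropic Euler system of dimension $n-1$, which is \eqref{tbb}.

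For the converse I would start from an arbitrary $\overset{*}{\Si}\in\overset{*}{\Th}{}^{n,n-1}$ and reconstruct it from its reductions, as in the proof of Theorem 5.2. By \eqref{tbf} and \eqref{hi} every symmetry $(Z,\om)$ satisfies $Z\hat e_n=Z^\dag\hat e_n=0$ and $\om_n=0$; in particular the $n$-th column of $Z$ vanishes, $Z_{in}=0$. Feeding $Z_{in}=0$ into the reduction formulas \eqref{cjf}--\eqref{cjh} gives $\hat\om_i=\om_i$, so the symmetry group $\{(\hat Z,\hat\om)\}_0$ of the reduced system $\calt_{\hat e_n,c_e}\overset{*}{\Si}$ is independent of $c_e$. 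This is precisely the feature separating the present case from Theorem 5.2, where $\hat e_n\in\La^I_\Si$ forces $Z_{in}\neq 0$ for some symmetry and hence a genuinely $c_e$-dependent reduced group.

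The central step is then to deduce from this $c_e$-independence that $\zeta$ does not depend on $z_n$. By \eqref{tbb} each reduction is, modulo a $\calt_{Q''U''}$ normalization, the isentropic Euler system, whose $\zeta'$ always has the fixed shape $z_1+\tfrac12\sum_{j=2}^{n-1}z_j^2$; and by \eqref{uab}, $\zeta(z',c_e)=\zeta'(z')$. Since the reduced symmetry group does not move with $c_e$ and the isentropic group is already maximal, the conjugating $Q''(c_e)$ normalizes a fixed group and so may be taken independent of $c_e$; thus $\zeta(z',c_e)$ is $c_e$-independent and, up to the \eqref{cd} freedom, $\zeta(z)=z_1+\tfrac12\sum_{j=2}^{n-1}z_j^2$ with $\zeta_{z_n}\equiv 0$. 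Consequently $\psi_n=\xi\zeta_{z_n}=0$ in \eqref{cc}, and the $c_e$-family of reduced isentropic equations of state $\zeta'=\sigma(\cdot\,,c_e)$ reassembles, via the identification $z_\zeta=z_n$ of \eqref{tbd}, into a single equation of state $\zeta=\sigma(\xi,z_n)$; reading $\xi=P$, $z_n=T$, $\zeta=G(P,T)$ yields \eqref{ede} and the entropy-conserving form \eqref{edd}. The residual freedom in the $x_n$-flux row $a_{nj}$, which the $x_n$-stationary reduction never sees, accounts for the ``additional additive terms $(a_{nj}(z))_{x_n}$'' in the statement; hyperbolicity is inherited from that of the reductions together with \eqref{bwe}, \eqref{bwf}.

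I expect the main obstacle to be the central step, namely upgrading ``the reduced symmetry group is $c_e$-independent'' to ``$Q''(c_e)$ may be chosen $c_e$-independent, hence $\zeta$ is $z_n$-free.'' This requires showing that, within the class modulo $\calt_{QU}$ in which $\Th^{n,L}$ is defined, a fixed maximal reduced symmetry group forbids the Galilean-type $c_e$-rescaling of the reduction that does occur in the extended-Euler case \eqref{tbc} --- exactly the place where the $\La^\perp$ structure, rather than $\La^I$, must be used. A consistency check is available through the exchange $\calt_{q,n,c_e}$: the correspondence \eqref{ubd} sends $\om_j$ to $\tilde Z_{jn}$ and so carries $\hat e_n\in\La^\perp_\Si$ to $\hat e_n\in\La^I_{\tilde\Si}$, matching \eqref{tbc}, which would place $\calt_{q,n,c_e}\overset{*}{\Si}$ in $\bar\Th^{n,n}$ by Theorem 5.2. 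Turning this into a proof, however, would require commuting $\calt_{q,n,c_e}$ past the reduction $\calt_{\hat e_n,\cdot}$, whose shared index $l=n$ makes the interchange delicate, so I would keep the reconstruction as the primary route and use the exchange only as a check.
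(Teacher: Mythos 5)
Your overall strategy (reduce via $\calt_{\hat e_n,c_e}$ to an isentropic Euler system for each $c_e$, then reconstruct $\zeta$) is the paper's, your forward inclusion is fine, and your observation that $\hat e_n\in\La^\perp_\Si$ forces $Z_{in}=Z_{nj}=\om_n=0$ and hence a $c_e$-independent reduced symmetry group is correct. But the central step does not close, and for a reason different from the one you anticipate. Even granting that $Q''(c_e)$ can be normalized, the inference ``$\zeta(z',c_e)$ is $c_e$-independent, hence $\zeta_{z_n}\equiv 0$'' is false: an additive term $\phi_n(z_n)$ in $\zeta$ is invisible to every reduction (it becomes an immaterial additive constant in $\zeta'$, cf.\ the remark after Theorem 2.1), and a common factor $\phi_2(z_n)$ multiplying $z_1+\tfrac 12\sum_{j=2}^{n-1}z_j^2$ is absorbed at each fixed $c_e$ by the scaling freedom \eqref{cd}, equivalently by a $c_e$-dependent $Q''$ lying in the normalizer of the isentropic symmetry group (the scaling $z_1\mapsto\la z_1$, $z_j\mapsto\sqrt\la\,z_j$ is such an element). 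So the reduction data alone leave $\zeta(z)=\phi_2(z_n)\bigl(z_1+\tfrac 12\sum_{j=2}^{n-1}z_j^2\bigr)+\phi_n(z_n)$ with both $\phi_2$ and $\phi_n$ undetermined. This also undercuts your account of the ``additional additive terms'': $a_{nj}$ is not a free row the reduction ``never sees'' --- by \eqref{cc}, \eqref{bt} it is forced to be $a_{nj}=\psi_{n,z_j}$ with $\psi_n=\xi\zeta_{z_n}$, so $\zeta_{z_n}\equiv 0$ would give no such terms at all; the additive terms in the statement are exactly the trace of a nonzero $\phi_n$.

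The paper closes these gaps with two ingredients you do not use. First, after establishing the ansatz \eqref{ghk}, it feeds the rotation and Galilean symmetries inherited from the isentropic image into \eqref{cj}, \eqref{cu}; the resulting identity \eqref{gkb} produces the dichotomy \eqref{gkd}/\eqref{gke}, and the $\La^\perp$ alternative \eqref{gkd} yields $\phi_1=\phi_2$ together with $Z_{jn}=0$, $\om_j=-Z_{1j}$, i.e.\ the symmetry structure is derived rather than merely observed to be $c_e$-independent. Second, and essentially, hyperbolicity (built into the definition of $\Th^{n,L}$) is invoked: condition \eqref{bwd} fails unless $\phi_2$ is constant, which is what finally gives \eqref{gld} and hence \eqref{edd}, \eqref{ede} up to the $\phi_n$-terms controlled by \eqref{gma}. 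Without an appeal to hyperbolicity you cannot exclude a nonconstant $\phi_2(z_n)$, and by discarding $\phi_n$ you would prove a statement strictly narrower than the theorem. Your consistency check via $\calt_{q,n,c_e}$ is sound and is in fact promoted in the paper to the structural identity \eqref{glc}, used precisely to transfer the hyperbolicity analysis between $\bar\Th^{n,n}$ and $\overset{*}{\Th}{}^{n,n-1}$.
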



\begin{rema*} In each case further details on the equation of state remain unspecified.
\end{rema*}

The proofs of these four theorems are closely related and are deferred to the following section.

\begin{cor} Assume
\be\label{tbi}  \Si(z) \in\Theta^{n, L} , \; \; n \ge 3, \; \; 2 \le L < n\ee
such that
\be\label{tbj} \calt_{\hat e_{L + 1}, c_{L+1}} \cdots \calt_{\hat e_n, c_n} \Si (z) = \Si' (z') \in \Theta^{L,L}\ee
with
\be\label{tbk} \hat e_{L+1}, \cdots, \hat e_n \in \La^\perp_\Si\ee
and some $c_{L+1}, \cdots, c_n \in \bbr$.

Then $\Si' (z') $  is an Euler system,
\be\label{tbl} \Si'(z') \in \ul{\Theta}^{L,L} \cup \bar\Theta^{L,L}.\ee
\end{cor}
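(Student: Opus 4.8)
The transformations $\calt_{\hat e_{L+1},c_{L+1}},\dots,\calt_{\hat e_n,c_n}$ in \eqref{tbj} remove precisely the directions of $\La^\perp_\Si$ in \eqref{tbk}, so the hypothesis already furnishes a system $\Si'(z')\in\Theta^{L,L}$; the content of \eqref{tbl} is therefore the inclusion $\Theta^{L,L}\subseteq\underline{\Theta}^{L,L}\cup\bar{\Theta}^{L,L}$, and this is what I would establish. The natural vehicle is strong induction on $L$, with the inductive statement strengthened to: every $\Si\in\Theta^{L,L}$ has $d:=\dim\La^I_\Si\in\{0,1\}$, and is an isentropic Euler system when $d=0$ and an extended Euler system when $d=1$. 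At each level note that the datum $(Z_\zeta,\om_\zeta)$ of \eqref{cxb} is nonzero, so by \eqref{szb} it contributes at least one direction to $\La^V_\Si$; hence $\dim\La^V_\Si\ge1$ and, by \eqref{gab} together with $\dim\La^\perp_\Si=0$, we have $d=L-\dim\La^V_\Si\le L-1$.

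The engine is a single reduction step carried out by $\calt_{\hat e_L,c_e}$ of \eqref{cjd}. When $d\ge1$ I would take $\hat e_L\in\La^I_\Si$ and $c_e\ne0$ as in \eqref{dha}; since by \eqref{szc} every admissible $(Z,\om)$ already satisfies $Z^\dag\hat e_L=\om\cdot\hat e_L=0$, the reduction conditions \eqref{dj}, \eqref{djb} cost no symmetry, and the maps \eqref{cjf}--\eqref{cjh} place the image in $\Theta^{L-1,L-1}$ with $\dim\La^I$ lowered by exactly one --- the deleted coordinate $\hat e_L$. When $d=0$ I would instead reduce along the convention direction $\hat e_L$, which is now a $\La^V_\Si$ direction, keeping $\dim\La^I=0$. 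In both cases the reduced system inherits the rank normalization \eqref{tac}, the equation of state \eqref{me}--\eqref{mga} through \eqref{uab}, hyperbolicity with $e_H=\hat e_1$ from \eqref{sp}, \eqref{btd}, and a surviving $(Z_\zeta,\om_\zeta)$, so it again lies in $\Theta^{L-1,L-1}$. Verifying exactly this --- that fixing $\hat e_L\cdot z=c_e$ and passing to stationarity in $\hat e_L\cdot x$ deletes the single coordinate $\hat e_L$ from the decomposition \eqref{sza}--\eqref{szd} while preserving both $\dim\La^\perp_\Si=0$ and the $\La^V/\La^I$ type of every remaining coordinate --- is the main obstacle; it is a direct but careful application of \eqref{cjf}--\eqref{cjh} against the definitions \eqref{szb}--\eqref{szd}.

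Granting the reduction step, the three cases close using the earlier theorems. If $d=0$, the reduced system lies in $\Theta^{L-1,L-1}$ with $\dim\La^I=0$, so by induction it is isentropic Euler, i.e. in $\underline{\Theta}^{L-1,L-1}$; this is precisely the recursion hypothesis \eqref{tbb} of Theorem 5.1, which then gives $\Si'\in\underline{\Theta}^{L,L}$. If $d=1$, then $\La^I_\Si=\mathrm{span}\{\hat e_L\}$, the reduced system is again in $\underline{\Theta}^{L-1,L-1}$, and taking $z_\zeta=z_L,\ e_{z_\zeta}=\hat e_L$ as in \eqref{tbd} with $c_e\ne0$ verifies the hypotheses of Theorem 5.2, yielding $\Si'\in\bar{\Theta}^{L,L}$. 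If $d\ge2$, one reduction along $\La^I_\Si$ produces a $\Theta^{L-1,L-1}$ system with $\dim\La^I=d-1$; the inductive hypothesis forces $d-1\le1$, so $d=2$ and the reduced system is an extended Euler system $\bar{\Theta}^{L-1,L-1}$. But then $\Si'$ satisfies the hypotheses of Theorem 5.3 ($\dim\La^I=2$, reducing to an extended Euler system), which declares that class empty --- a contradiction. Hence $d\in\{0,1\}$ and $\Si'$ is Euler.

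For the base of the induction, $L=2$, the same count gives $\dim\La^V_\Si\ge1$ and $d\le1$; a single reduction lands in $\Theta^{1,1}$, a one-dimensional (scalar) system trivially in $\underline{\Theta}^{1,1}$. Since extended Euler systems exist only for dimension $n\ge3$ (Theorem 5.2) whereas isentropic Euler systems exist for $n\ge2$ (Theorem 5.1), the case $d=1$ cannot occur at $L=2$, forcing $d=0$ and $\Si'\in\underline{\Theta}^{2,2}$. This anchors the induction and, together with the case analysis above, yields \eqref{tbl}.
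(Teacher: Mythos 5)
Your opening move --- replacing the corollary by the unconditional inclusion $\Theta^{L,L}\subseteq\ul{\Theta}^{L,L}\cup\bar{\Theta}^{L,L}$ --- is where the argument breaks. That inclusion is false, and the paper itself exhibits the counterexample immediately after Theorem 5.1: the system \eqref{gex} with $\zeta(z)=z_1+\tfrac12 z_2^2+(z_3^2+z_4^2)^2$ lies in $\Theta^{4,4}$ (equation of state of the form \eqref{mea}, full rank \eqref{tac}, $e_H=\hat e_1$, and $\La^\perp_\Si=\{0\}$ via the Galilean and rotational symmetries), has $\dim\La^I_\Si=0$, and is not an isentropic Euler system --- it is presented precisely as a system satisfying \eqref{tba} but not \eqref{tbb}. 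Your strengthened inductive statement (``every $\Si\in\Theta^{L,L}$ with $d=0$ is isentropic Euler'') therefore already fails at $L=4$. The root cause is that Theorems 5.1--5.3 are not classifications of $\Theta^{L,L}$ by $\dim\La^I_\Si$ alone: each carries the recursive hypothesis \eqref{tbb}, that the reduction $\calt_{\hat e_n,c_e}\Si(z)$ lands, for almost all $c_e$ and up to a transformation $\calt_{Q''U''}$, in the Euler class one dimension down. You invoke all three theorems without this hypothesis; in particular your $d\ge2$ contradiction via Theorem 5.3 does not follow, since that theorem only empties the class of systems that both have $\dim\La^I_\Si=2$ and reduce to extended Euler systems. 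You do flag ``verifying the reduction step'' as the main obstacle, but it is not a deferred computation: for the statement as you strengthened it, no such verification can exist.

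The hypothesis you discard at the outset as already ``furnished'' is the one that must carry the proof. The corollary does not assert that every element of $\Theta^{L,L}$ is Euler; it asserts this only for $\Si'(z')$ arising by reduction along $\La^\perp_\Si$ from a full-rank system $\Si(z)\in\Theta^{n,L}$ with $L<n$. It is the rank condition \eqref{tac}, combined with \eqref{cc} and the equation of state \eqref{me}, that forces the perpendicular directions $\hat e_{L+1},\dots,\hat e_n$ to enter $\Si(z)$ through $z_\zeta$ (otherwise rows of $a(z)=\psi_z$ degenerate), which is exactly the setting of Theorem 5.4 and section 6.3: there \eqref{gjd}, \eqref{gkg}, \eqref{gkj}, \eqref{gld} pin $\zeta$ restricted to $\La_\Si$ down to the Euler form, and the reduced system is then identified with $\ul{\Theta}^{L,L}$ or $\bar{\Theta}^{L,L}$ via Theorems 5.1, 5.2 and 5.4. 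Separately, your base case appeals to ``$\Theta^{1,1}$, trivially in $\ul{\Theta}^{1,1}$,'' a class the paper never defines; the recursion bottoms out at $n=2$, where Lemma 6.2 handles $\Theta^{2,L}$ directly by phase-plane analysis.
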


\section{Proofs of the main theorems}

The proofs are obtained by induction on $n$.  Elementary phase plane analysis suffices to prove theorem 5.1 in the special case $n = 2$.  Subsequently introduced subclasses of $\Theta^{n,L}$ permit construction of right inverse mappings of $\calt_{e, c_e}$.

\begin{lem} Assume
\be\label{gaa} \Si(z) \in \Theta^{n, L}, \; \; \, n \ge 2 \ee
satisfying \eq{tac} and \eq{tab} with $L $ obtained from \eq{gab}.  Then
\be\label{gac} L \ge 2.\ee
\end{lem}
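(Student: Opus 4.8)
The plan is to prove $L\ge 2$ by induction on $n\ge 2$, the whole content being the exclusion of $L=1$. First I record the trivial half: since $L=\dim\La_\Si$ is a nonnegative integer and $L>0$ by \eqref{tab}, we have $L\ge1$, i.e. $\dim\La^\perp_\Si\le n-1$ by \eqref{gab}. In fact the $\zeta$-direction $(Z_\zeta,\om_\zeta)$ furnished by membership in $\Th^{n,L}$ already gives this: the field $v(y)\eqadef Z_\zeta y+\om_\zeta$ is nonvanishing on $D$ by \eqref{cxb}, while $e\cdot v=(Z_\zeta^\dag e)\cdot y+\om_\zeta\cdot e=0$ for every $e\in\La^\perp_\Si$ by \eqref{hi}, \eqref{hha} (the $\zeta$-direction satisfies \eqref{hha} with constant $c_{\zeta,Z_\zeta}>0$), so $v(y)\in\La_\Si\setminus\{0\}$. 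It thus remains to rule out $\dim\La^\perp_\Si=n-1$.

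For the inductive step assume the assertion in dimension $n-1$, take $n\ge3$, and suppose for contradiction that $\Si(z)\in\Th^{n,1}$, so $\dim\La^\perp_\Si=n-1\ge2$. Choose an inert direction $0\ne e\in\La^\perp_\Si$; after a transformation $\calt_{QU}$ with \eqref{cjb} we may take $e=\hat e_n$ by the convention \eqref{cje}. Because $e$ is inert we have $(\hat e_n\cdot z)^*=(\hat e_n\cdot x)^*=0$, so the reduction $\Si'(z')\eqadef\calt_{\hat e_n,c_e}\Si(z)$ of \eqref{uaa}, \eqref{uab} fixes the invariant component $\hat e_n\cdot z=c_e$ and drops it, giving a system of dimension $n-1$ with data $\zeta',\xi',z'_\zeta$ the restrictions \eqref{uab}, \eqref{cjc}. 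Since $Z_\zeta\hat e_n=Z_\zeta^\dag\hat e_n=0$ and $\om_\zeta\cdot\hat e_n=0$, the hypotheses \eqref{cjf} hold, and the transformation rules \eqref{cjg}, \eqref{cjh} reduce to $\hat Z_{ij}=Z_{ij}$, $\hat\om_i=\om_i$ on the surviving indices: the active subspace is carried isomorphically onto $\La_{\Si'}$ while $\dim\La^\perp$ drops by exactly one. Hence $L'=\dim\La_{\Si'}=1$ and $\dim\La^\perp_{\Si'}=n-2$. Granting that $\Si'\in\Th^{n-1,1}$ — that the reduction preserves the full-rank condition \eqref{tac}, hyperbolicity \eqref{btc} with time-like direction \eqref{btd}, the equation of state \eqref{me}--\eqref{mga}, and $\dim\{(Z,\om)\}_\zeta=1$ in \eqref{sb} — the induction hypothesis forces $L'\ge2$, contradicting $L'=1$. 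Thus $L\ge2$ for all $n\ge3$.

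This leaves the base case $n=2$, which must be treated directly, since reducing $\Th^{2,1}$ along its single inert direction lands in $\Th^{1,1}$, and scalar laws show $\Th^{1,1}\ne\varnothing$. Here $L=1$ means one active and one inert phase-space direction. Writing $s=e_V\cdot z$ with $e_V$ spanning $\La_\Si$, the relations $\range Z,\range Z^\dag\subseteq\La_\Si$ and $\om\in\La_\Si$ coming from \eqref{hi} (as $\La^\perp_\Si$ is the line orthogonal to $e_V$) give every \eqref{hha}-infinitesimal the rank-one form $Z=\gamma\,e_Ve_V^\dag,\ \om=\delta\,e_V$; then $v=\lambda(s)e_V$ and constancy of $\zeta^*=\lambda(s)\,\zeta_s$ makes $\zeta_s$ depend on $s$ alone, so $\zeta$ separates as $\zeta=\Phi(s)+g(z_\perp)$ and, by \eqref{mga}, $z_\zeta=z_\zeta(z_\perp)$. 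Substituting this into $\psi=\xi\zeta_z^\dag$ of \eqref{cc} and into the equation of state $\zeta=\sigma(\xi,z_\zeta)$, I would show by phase-plane analysis of the resulting $2\times2$ system that the full-rank requirement \eqref{tac} and strict hyperbolicity \eqref{btc} cannot hold together.

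I expect the two genuine obstacles to be, first, this base-case computation — verifying that a rank-two hyperbolic $Z$-system with additively separated $\zeta$ and a one-dimensional inert subspace does not exist, which is exactly where a direct calculation (not mere linear algebra) is unavoidable, because one checks easily that $\rank a=n$ alone is \emph{not} obstructed by the separation, so the contradiction has to be extracted from \eqref{btc}. Second, in the inductive step one must justify that reduction along an inert direction keeps the system inside $\Th^{n-1,1}$, in particular that invertibility of $a(z)$ in \eqref{tac} survives the passage to the $(n-1)\times(n-1)$ principal block and that $\dim\{(Z,\om)\}_\zeta$ stays equal to one. Everything else is bookkeeping on the decomposition \eqref{hh}, \eqref{sd} and the transformation rules \eqref{cjf}--\eqref{cjh}.
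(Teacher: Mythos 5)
Your proposal does not establish the lemma: the entire content is pushed into the base case $n=2$, which you do not carry out, and you aim that computation at the wrong hypothesis. You assert that the full-rank condition \eq{tac} alone is not obstructed by the separation and that the contradiction has to be extracted from the hyperbolicity condition \eq{btc}; the paper's proof does exactly the opposite, and the paper remarks immediately after the proof that the lemma does not depend on hyperbolicity. The point you miss is that your separation $\zeta=\Phi(s)+g(z_\perp)$ with possibly nonconstant $\Phi$ comes from using only the weak consequence $\zeta^*=c_{\zeta,Z}$ of \eq{hha}. For the elements of $\{(Z,\om)\}_0$ guaranteed by \eq{tab} one has $\zeta^*=0$ and $\xi^*=0$ exactly by \eq{cu}, and $z_\zeta^*=0$ by \eq{mga}; when $L=1$, so that $\La_\Si=\mathrm{span}\{\hat e_k\}$ and every admissible $(Z,\om)$ reduces to $Z=Z_{kk}\hat e_k\hat e_k^\dag$, $\om=\om_k\hat e_k$ with $(Z_{kk},\om_k)\neq(0,0)$ for some element, these identities force $\zeta_{z_k}=0$ and $z_{\zeta,z_k}=0$ almost everywhere. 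The equation of state \eq{me}, \eq{mf} then gives $\xi_{z_k}=\zeta_{z_k}/\sigma_\xi=0$, so by \eq{cc}, \eq{bt} the entire $k$-th column of $a=\psi_z$ vanishes and \eq{tac} fails. This argument is direct and uniform in $n\ge 2$; no induction, no phase-plane analysis, and no appeal to \eq{btc} is needed.

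The induction is therefore superfluous, but even as scaffolding it is not sound as written: you explicitly grant, rather than prove, that reduction along an inert direction keeps the system in $\Theta^{n-1,1}$, and in particular that \eq{tac} survives passage to the principal $(n-1)\times(n-1)$ block of $a$ at $z_n=c_e$. A principal submatrix of a full-rank matrix need not have full rank, so this is a real obligation, not bookkeeping; likewise preservation of hyperbolicity with the same $e_H$ and of $\dim\{(Z,\om)\}_\zeta=1$ in \eq{sb} would have to be verified. Since the direct argument above disposes of every $n\ge 2$ at once, the inductive scheme should be abandoned entirely.
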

\begin{proof} Assume \eq{gac} fails, implying $L = 1$ using \eq{tab}.  Applying a transformation $\calt_{QU}$ as required, there exists $k = 1, \cdots, n$ such that \eq{hh} holds with
\be\label{gae} \La_{\Si(z)} = span \{ \hat e_k\}.\ee

Then from \eq{tab}, there exists
\be\label{gbt} (Z,\om) \in \{ (Z, \om)\}_0.\ee

For all such $(Z,\om), \; \,y \in D$, using \eq{cs}, \eq{cu}, \eq{gae},  for the system $\Si(z)$,
\begin{align}\label{gad} \zeta_z (y) (Z y + \om) &= \zeta_{(\hat e_k \cdot z)} (y)\;  ((Zy)_k + \om_k)\nonumber \\
&= 0.\end{align}

With $L = 1$, using \eq{gae}, there can be no $e \in \bbr^n$ satisfying \eq{szc}, \eq{szd}, so necessarily \eq{gae} holds with
\be\label{gaz} \hat e_k \in\La^V_{\Si(z)}\ee
satisfying \eq{szb}.

From \eq{gad}, \eq{gaz}, \eq{szb},
\be\label{gam} \zeta_{(\hat e_k\cdot z)} (y) = 0 \ee
for almost all $y\in D$, and \eq{cga} holds with
\be\label{gai} e_\perp = \hat e_k.\ee

If $\xi(z)$ is specified independently of $\zeta(z)$ for the system $\Si(z),$ then using \eq{cr}, \eq{ci} the conditions \eq{gad}, \eq{gam} also hold with $\zeta$ replaced by $\xi$, and \eq{tac} fails using \eq{cc}, \eq{bt}.

For $\xi (z)$ obtained from $\zeta(z)$ and an equation of state \eq{me}, the condition \eq{tac} can hold only if
\be\label{gan} z_{\zeta, (\hat e_k \cdot z)} (y) \neq 0 \ee
for almost all $y\in D$.

But \eq{gan}, \eq{gaz}, \eq{mga} are incompatible.
\end{proof}

Lemma 6.1 does not depend on hyperbolicity. But making transformations $\calt_{QU}$ as required, throughout we retain \eq{sp} by convention in \eq{btb}, \eq{btc}, \eq{btd}.

\begin{lem}  The systems $\Theta^{2,L}$ are isentropic Euler systems, satisfying \eq{eaa}, \eq{mea}.
\end{lem}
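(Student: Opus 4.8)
The plan is to carry out the ``elementary phase plane analysis'' in the plane $z=(z_1,z_2)$: first reduce the equation of state to the form \eqref{mea}, then pin the kinematic potential $\zeta$ down to the parabolic Euler form. By Lemma~6.1 we have $L\ge2$, and since $L=n-\dim\La^\perp_\Si\le n=2$ by \eqref{gab}, it follows that $L=2$ and $\La^\perp_\Si=\{0\}$. As in Lemma~6.1, the condition \eqref{tab} provides a nontrivial $(Z_0,\om_0)\in\{(Z,\om)\}_0$ (cf.\ \eqref{gbt}); its generator $V_0(z)=Z_0z+\om_0$ is an affine field vanishing at most on a line. Along $V_0$ we have, by \eqref{cu}, both $\zeta^*=\zeta_z V_0=0$ and $\xi^*=\xi_z V_0=0$. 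In the plane this forces $\zeta_z\parallel\xi_z$ wherever $V_0\neq0$, so $\xi$ is a function of $\zeta$; by \eqref{mf} any $z_\zeta$ is likewise constant on the $V_0$-orbits and hence a function of $\zeta$, so the equation of state collapses to \eqref{mea}, $\zeta=\sigma(\xi)$ with $\sigma_\xi>0$, while the $\zeta$-symmetry \eqref{cxb} gives $\zeta_z\neq0$ throughout.

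\emph{Next}, using hyperbolicity with the convention \eqref{sp}, $e_H=\hat e_1$, condition \eqref{btd} forces $Z_0\hat e_1=0$, so $Z_0=\begin{pmatrix}0&a\\0&b\end{pmatrix}$ and $V_0=(az_2+\om_{0,1},\,bz_2+\om_{0,2})$. The relation $\zeta^*=0$ is then a first-order linear PDE whose characteristics are the $V_0$-orbits, so $\zeta$ is determined by their geometry. I would classify this geometry using the rank condition \eqref{tac} applied to $a(z)=\xi\zeta_{zz}+\sigma_\xi^{-1}\zeta_z^\dag\zeta_z$ (from \eqref{cc}, \eqref{cg}): whenever $\zeta$ is a function of a single combination $w(z)$ whose Hessian has rank one, $a(z)$ has rank $\le1$, contradicting \eqref{tac}. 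This excludes $a=0$ and the linear-characteristic subcase of $b\neq0$; the remaining $b\neq0$ subcase produces a genuinely logarithmic characteristic $w=z_1-\tfrac ab z_2-\tfrac{\om_{0,1}}{b}\ln|z_2|$, which I would rule out by showing it admits no affine field $Z_\zeta z+\om_\zeta$ with $\zeta_z(Z_\zeta z+\om_\zeta)\equiv{}$const${}>0$, i.e.\ no $\zeta$-symmetry \eqref{cxb}. Thus $b=0$ and $\om_{0,2}\neq0$.

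\emph{Finally}, integrating the characteristic ODE with $b=0$ gives $\zeta=F\big(z_1-\tfrac{a}{2\om_{0,2}}z_2^2-\tfrac{\om_{0,1}}{\om_{0,2}}z_2\big)$, with parabolic level sets. Using the reparametrization freedom \eqref{cd} and a transformation $\calt_{QU}$ (which preserves the class and the gradient form \eqref{bt}) to absorb the constants, $\zeta=z_1+\half z_2^2$, which is \eqref{eab}, \eqref{eaa} with $m=2$. Together with the free equation of state $\zeta=\sigma(\xi)=H(P)$ of the form \eqref{mea}, this is the isentropic Euler system; one checks $\det a(z)=\xi/\sigma_\xi>0$ (so \eqref{tac} holds) and $(e_H\cdot\psi)_{zz}\ge0$ under \eqref{bwc}, confirming consistency. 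The main obstacle is the phase-plane classification of the preceding paragraph, and specifically the exclusion of the logarithmic characteristic: there I must combine the rank-two requirement \eqref{tac} with the existence of the positive $\zeta$-scaling symmetry \eqref{cxb} to leave only the parabolic (Galilean) normal form.
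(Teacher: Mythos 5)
Your overall strategy tracks the paper's quite closely: reduce to $L=2$ via Lemma 6.1, use \eqref{btd} with the convention \eqref{sp} to force the first column of $Z_0$ to vanish, classify the first integrals of the affine field $Z_0z+\om_0$ in the phase plane, and collapse the equation of state to the form \eqref{mea} (the paper does this last step via the bound \eqref{sc} of Theorem 2.10, using $\dim\La^I_\Si=\dim\La^\perp_\Si=0$, rather than your direct two-dimensional first-integral argument, but the two are essentially equivalent here). The paper's normalization produces exactly your two surviving candidates: \eqref{gdh}, \eqref{gdj} giving $\zeta=z_2e^{-z_1}$ (your logarithmic case) and \eqref{gdi}, \eqref{gdk} giving $\zeta=z_1+\tfrac12 z_2^2$ (the parabolic/Euler case), and your rank-based exclusion of the affine-first-integral subcases via \eqref{tac} is sound.

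The genuine gap is the step you yourself flag as the main obstacle: the exclusion of the logarithmic candidate. You propose to rule out $\zeta=F\bigl(z_1-\tfrac ab z_2-\tfrac{\om_{0,1}}{b}\ln|z_2|\bigr)$ by showing it admits no affine field with $\zeta_z(Z_\zeta z+\om_\zeta)$ equal to a positive constant, i.e.\ no $\zeta$-symmetry \eqref{cxb}. But such a field exists: using the reparametrization freedom \eqref{cd} take $F$ to be the identity, so that $\zeta_{z_1}\equiv 1$; then $Z_\zeta=0$, $\om_\zeta=\hat e_1$ gives $\zeta_z\om_\zeta=1>0$, and \eqref{cxa}, \eqref{cxb} hold. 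The paper states this explicitly at \eqref{gdl}: ``In either case, the conditions \eqref{cxa}, \eqref{cxb} are satisfied with $Z_\zeta=0$, $\om_\zeta=(1,0)^\dag$.'' So the $\zeta$-scaling symmetry cannot separate the two candidates, and under your argument the exponential system $\zeta=z_2e^{-z_1}$ survives. The paper's actual discriminator is hyperbolicity: membership in $\Theta^{2,L}$ requires \eqref{btc} with the time-like direction fixed by \eqref{sp}, \eqref{btd}, and the paper disposes of the case \eqref{gdj} by observing that hyperbolicity fails there, while \eqref{bwc} delivers it for \eqref{gdk}. You invoke hyperbolicity only to constrain $Z_0$, not at the decisive exclusion step, so the proof as proposed does not close.
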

\begin{proof} By appeal to lemma 6.1, it suffices to consider systems
\be\label{gda} \Si(z) \in \Th^{2,2}.\ee

For such systems, from \eq{cj}, \eq{cu}, the prescribed function $\zeta(z)$ satisfies
\be\label{gdb} (\zeta_{z_1} (y)\;\; \; \; \zeta_{z_2} (y)) (Zy+\om) = 0\ee
for all $y\in D, (Z,\om)$ satisfying \eq{gbt}, equivalently
\be\label{gdc} \frac{d}{d\tau} \, \zeta(y,(\tau)) = 0 \ee
on trajectories within $D$
\be\label{gdd} y_\tau (y(\tau)) = Z y(\tau) + \om.\ee

Thus $(Z,\om)$ is unique up to scaling.

From \eq{btd}, $Z$ is singular.  Vanishing $Z$ would imply nontrivial
\be\label{gde} e_\perp \in \La^\perp_{\Si(z)},  \; \, e_\perp \cdot \om = 0,\ee
incompatible with \eq{gda}.  Thus necessarily
\be\label{gdg} \rank\;  Z = 1.\ee

Using \eq{btd}, \eq{sp}, it suffices to consider
\be\label{gdh} Z = \begin{pmatrix} 0 & 0\\0 &1\end{pmatrix},\ee
or alternatively
\be\label{gdi} Z = \begin{pmatrix} 0 & 1\\0 &0\end{pmatrix}.\ee

For \eq{gdh}, solution of \eq{gdb} is
\be\label{gdj} \zeta(z) = z_2 e^{-z_1} , \; \om = {1\choose 0}.\ee

For \eq{gdi}, from \eq{gdb}
\be\label{gdk} \zeta(z) = z_1 + \thalf \; z^2_2, \; \om = {0\choose -1}.\ee

In either case, the conditions \eq{cxa}, \eq{cxb} are satisfied with
\be\label{gdl} Z_\zeta = 0,\; \om_\zeta = {1\choose 0}.\ee

By appeal to theorem 2.10, such systems $\Si(z)$ are associated with an equation of state \eq{me}.  But as
\be\label{gdm} \dim \La^I_\Si = \dim\La^\perp_\Si = 0 \ee
follows from \eq{gdh}, \eq{gdk}, necessarily $z_\zeta (z)$ vanishes identically, using \eq{sc}, and the equation of state is of the form \eq{mea}. In the case \eq{gdj}, hyperbolicity fails.  In contrast, in the case \eq{gdk}, using \eq{sp}, the condition \eq{bwc} suffices for hyperbolicity.

Such $\Si(z)$ are precisely the isentropic Euler systems \eq{eaa} with $n~=~2$.
\end{proof}

\subsection{System subclasses}

Subclasses of systems $\Th^{n,L}$, satisfying additional structural or symmetry conditions, are distinguished by subscripts below; intersections thereof by multiple subscripts.  Each such subclass is invariant under transformations $\calt_{QU}$ \eq{dja}, satisfying \eq{cjb} and maintaining \eq{sp}, satisfying
\be\label{gap} Q_{1j} = U_{1j} =  \de_{1, j}, \; \; j = 1,\cdots, n.\ee

Additionally, these subclasses satisfy useful properties with respect to transformations $\calt_{e, c_e}$ \eq{cjd}, maintaining \eq{sp} with
\be\label{gar} e = \hat e_j, \; \; j \ge 2, \; \,   n \ge 3,\ee
and transformations $\calt_{q, l , c_e}$ \eq{dja} with
\be\label{gas} l, n \ge 3.\ee

For systems
\be\label{gba} \Si(z) \in \Th^{n,L}_W,\ee
the prescribed function $\zeta(z)$  in \eq{cc} is of the form \eq{hdy}, using \eq{hba},  with neither $W$ nor $Y$ vanishing identically.

Applying a transformation $\calt_{QU}$ and scaling $z$ as necessary, the condition \eq{sp} is maintained with $W$ diagonal below,
\be\label{gak} W_{1} = 0, \, Y_1 = 1; \; W_{jj} \in \{ 0, 1\}, \; Y_j = 0, \; j = 2, \cdots, n.\ee

In contrast, for
\be\label{gbb} \Si(z) \in \Th^{n,L}_C,\ee
the system is closed as per definition 2.8, as in \eq{ebd}.

For systems
\be\label{gbc} \Si(z) \in \Th^{n,L}_\perp\ee
there exists a nontrivial $e_\perp$ in \eq{cga}.  Such systems cannot be uniformly hyperbolic as per definition 2.12.

For systems
\be\label{gbe} \Si(z) \in \Th^{n,L}_T,\ee
the equation of state is of the form \eq{me}, \eq{mga}.

For
\be\label{gbd}  \Si(z) \in \Th^{n,L}_{T^*},\ee
the equation of state is of the form \eq{mea}.

For systems
\be\label{gbf} \Si(z) \in \Th^{n, L}_q,\ee
the conditions \eq{me}, \eq{bwa} hold and the entropy flux and potential vectors are related by
\be\label{gbg} q = - \frac{\sigma_{z_\zeta}}{\xi \sigma_\xi}\;  \psi\ee
as in \eq{ecf}

In contrast, for systems
\be\label{gbh} \Si(z) \in \Th^{n,L}_{q^*},\ee
the entropy flux vector vanishes identically.

For systems
\be\label{gbi}\Si (z) \in \Th^{n, L}_I,\ee
the subspace $\La^I_\Si$ in \eq{sza}, satisfying \eq{szc}, \eq{szd}, is nontrivial, as opposed to
\be\label{gbl} \Si(z) \in \Th^{n,L}_{I^*},\ee
for which $\La^I_\Si$ is trivial; in \eq{sza},
\be\label{gbla} \La_\Si = \La^V_\Si.\ee

For systems
\be\label{gbo} \Si(z)\in \Th^{n,L}_\om\ee
there exists $(Z,\om)$ satisfying \eq{gbt} with
\be\label{gboa} \om \neq 0,\ee
not vanishing identically, whereas for
\be\label{gbp} \Si(z) \in \Th^{n,L}_{\om^*},\ee
$\om$ vanishes identically; \eq{gbt} holds with
\be\label{gbq} \{ ( Z, \om)\}_0 = \{ (Z, 0)\}_0.\ee

In the special case that \eq{so} holds, $\La^\perp_\Si$ in \eq{gab} is trivial, and $\La_\Si = \bbr^n$ in \eq{sza}.

In this case, from \eq{gbe}, \eq{mga}, \eq{gbi}
\be\label{gca} \Th^{n,n}_T \subset \Th^{n,n}_I\ee
and conversely
\be\label{gcb} \Th^{n,n}_{I^*} \subset \Th^{n,n}_{T^*}.\ee

From \eq{gbb}, \eq{szc}, \eq{szd}, \eq{gbi}
\be\label{gcc} \Th^{n, n}_C \subset \Th^{n,n}_I.\ee

From \eq{gbg}, \eq{gbe},
\be\label{gcz}  \Th^{n,L}_q \subset \Th^{n,L}_T.\ee

\begin{lem} For $\Th^{n,L}_I$,
\be\label{gga} e_H \notin \La^I_\Si.\ee
\end{lem}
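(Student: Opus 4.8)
The plan is to obtain an immediate contradiction by playing the defining property of the time-like direction $e_H$ against the defining property of $\La^I_\Si$; the one preparatory observation I need is that, for a system in $\Th^{n,L}$, the locus \eq{hha} coincides with $\{(Z,\om)\}_0$.

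First I would assemble the two ingredients. Since $\Si(z) \in \Th^{n,L}_I \subseteq \Th^{n,L}$ is a hyperbolic $Z$-system, it admits a time-like direction $e_H$, normalized by \eq{sp}; by \eq{btd} together with \eq{ch} this direction satisfies $Z e_H = -X^\dag e_H = 0$ for every $(Z,\om) \in \{(Z,\om)\}_0$. Secondly, every system in $\Th^{n,L}$ carries an equation of state \eq{me} obeying \eq{mf} and \eq{mga}, and for such systems the reduction \eq{mh}, \eq{mi} shows that any $(Z,\om)$ for which \eq{hha} holds must have $\xi^* = 0$ and hence, by \eq{cu}, belongs to $\{(Z,\om)\}_0$; since the reverse inclusion is immediate, \eq{hha} describes exactly $\{(Z,\om)\}_0$.

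With these in hand I would argue by contradiction. Assuming $e_H \in \La^I_\Si$, the defining clause \eq{szd} supplies some $(Z,\om)$ satisfying \eq{hha} for which $Z e_H \neq 0$. By the identification above this pair lies in $\{(Z,\om)\}_0$, so \eq{btd} forces $Z e_H = 0$, contradicting the choice of witness. Thus $e_H$ fails the requirement \eq{szd}, and hence $e_H \notin \La^I_\Si$, which is \eq{gga}.

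I expect the only substantive step to be the identification of \eq{hha} with $\{(Z,\om)\}_0$ --- exactly the place where the monotonicity \eq{mf} of the equation of state and the invariance \eq{mga} of $z_\zeta$ are invoked to pass from $\zeta^* = 0$ to $\xi^* = 0$ and thence to \eq{cu}. Once that identification is secured the conclusion is automatic, since $Z e_H$ vanishes throughout $\{(Z,\om)\}_0$ whereas membership of $e_H$ in $\La^I_\Si$ would, through \eq{szd}, demand $Z e_H \neq 0$ for some admissible $Z$.
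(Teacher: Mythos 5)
Your proposal is correct and follows essentially the same route as the paper: the paper's one-line proof ("\eqref{btd} is incompatible with \eqref{szc}, \eqref{szd}") is exactly your contradiction between $Ze_H=0$ for all $(Z,\om)\in\{(Z,\om)\}_0$ and the witness $Ze_H\neq 0$ demanded by \eqref{szd}. The identification of the pairs satisfying \eqref{hha} with $\{(Z,\om)\}_0$, which you rightly flag as the only substantive step, is already established in the paper in section 2.5 (the passage \eqref{mh}, \eqref{mi} culminating in "thus \eqref{cu} holds"), so you have merely made explicit what the paper's proof leaves implicit.
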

\begin{rema*} For $L < n, \, e_H \in \La^\perp_\Si$ is possible, for example
\[ \Si (z) \in \Th^{3,2}_{I^*}, \; \zeta(z) = z_1 + (z^2_2 + z^2_3)^2.\]
\end{rema*}
\begin{proof} The condition \eq{btd} for $e_H$ is incompatible with \eq{szc}, \eq{szd}.
\end{proof}

\begin{lem} In \eq{gba}
\be\label{gxb} \Th^{n,L}_W \subset \Th^{n,L}_{I^*, \om}\ee
and
\be\label{gxa} L = 1 + \rank \, W.\ee
\end{lem}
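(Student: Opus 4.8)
The plan is to make the symmetry group of $\Si(z)$ completely explicit in the normalization \eqref{gak} and then read off the three subspaces $\La^V_\Si,\La^I_\Si,\La^\perp_\Si$ directly. Write $\zeta(z)=Y\cdot z+\tfrac12 z\cdot Wz+\text{const}$ with $W=\diag(0,W_{22},\dots,W_{nn})$, $W_{jj}\in\{0,1\}$, and $Y=\hat e_1$, and set $B\eqadef\{j\ge 2:W_{jj}=1\}$, $C\eqadef\{j\ge 2:W_{jj}=0\}$, so that $r\eqadef\rank W=|B|$ and, by \eqref{cy}, $D^\perp_{WY}=\mathrm{span}\{\hat e_j:j\in C\}$ has dimension $n-1-r$. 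First I would substitute this $\zeta$ into the defining relation \eqref{hha}: the quadratic term forces $WZ$ antisymmetric and the linear term forces $Z^\dag Y+W\om=0$. Using that $W$ is diagonal together with the construction \eqref{he}--\eqref{hec}, this pins down $\{Z\}_{WY}$ as the matrices that are antisymmetric on the $B\times B$ block, arbitrary in the first row restricted to columns in $B$, and zero in every other entry, with $\om_Z=-\suml_{k\in B}Z_{1k}\hat e_k$ supported on $B$ by \eqref{hec}.

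For \eqref{gxa} I would compute $\La^\perp_\Si$ via Lemma 2.7. That lemma already gives $\La^\perp_\Si\subseteq D^\perp_{WY}$, with equality precisely under \eqref{hla}, so it suffices to verify \eqref{hla} for the explicit $\{Z\}_{WY}$ above. The condition $\om_Z\perp D^\perp_{WY}$ is immediate since $\om_Z$ is supported on $B$ and $D^\perp_{WY}$ on $C$; the condition $D^\perp_{WY}\subseteq\ker Z$ follows because, for $j\in C$, the $j$-th column of every $Z\in\{Z\}_{WY}$ vanishes: antisymmetry of the $B\times B$ block kills $Z_{kj}$ for $k\in B$, the rows indexed by $C$ are zero by \eqref{he}, and $Z^\dag Y\in\range W$ kills $Z_{1j}$ by \eqref{hez}. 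Hence $\La^\perp_\Si=D^\perp_{WY}$, and \eqref{gab} yields $L=n-\dim\La^\perp_\Si=n-(n-1-r)=1+\rank W$.

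For \eqref{gxb} I would show that every basis vector complementary to $D^\perp_{WY}$ lies in $\La^V_\Si$. Since $\La_\Si$ is the complement of $\La^\perp_\Si=D^\perp_{WY}$, it is spanned by $\hat e_1$ and $\{\hat e_j:j\in B\}$, and by \eqref{szb} it is enough to produce, for each such vector $e$, a symmetry with $Z^\dag e\neq 0$ or $\om\cdot e\neq 0$. Because $W\neq 0$ we have $B\neq\emptyset$, so we may pick $Z\in\{Z\}_{WY}$ with some first-row entry $Z_{1k}\neq 0$ ($k\in B$); then $Z^\dag\hat e_1$ is the nonzero first row of $Z$, placing $\hat e_1\in\La^V_\Si$, while for $j\in B$ the choice $Z_{1j}\neq 0$ gives $\om_Z\cdot\hat e_j=-Z_{1j}\neq 0$, placing $\hat e_j\in\La^V_\Si$. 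Thus $\La^V_\Si=\La_\Si$, forcing $\dim\La^I_\Si=0$ by \eqref{sza}, so $\Si(z)\in\Th^{n,L}_{I^*}$; and the same $Z$ yields $\om_Z\neq 0$, so $\Si(z)\in\Th^{n,L}_\om$, establishing \eqref{gxb}.

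The main obstacle I anticipate is the equality $\La^\perp_\Si=D^\perp_{WY}$ rather than the inclusion already supplied by Lemma 2.7: it rests on the vanishing of the $C$-columns of every admissible $Z$, which is exactly the content of \eqref{hla} and which uses the diagonal normalization \eqref{gak} in an essential way. One must also take care that the quantifiers in \eqref{hi}, \eqref{szb}, \eqref{szc} range over genuine symmetry generators—those satisfying \eqref{ci} and \eqref{cj}, for which the equation-of-state invariance \eqref{mga} forces $\xi^*=0$ through \eqref{mi}—and not over spurious $(Z,\om)$ that merely render $\zeta^*$ constant while failing $z^*_\zeta=0$; checking this compatibility with \eqref{mga} is where the index bookkeeping is most delicate, since an unintended generator acting nontrivially on a $C$-direction would artificially shrink $\La^\perp_\Si$ and break \eqref{gxa}.
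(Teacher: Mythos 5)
Your argument is correct and follows essentially the same route as the paper: identify the index set $\{j\ge 2: W_{jj}=1\}$, observe via \eqref{he}--\eqref{hec} that admissible $Z$ are supported on the first row and that block with $\om_Z$ determined by \eqref{hec}, exhibit Galilean-type generators placing $\hat e_1$ and the $W$-active directions in $\La^V_\Si$, and count dimensions to get $\La^I_\Si=\{0\}$ and $L=1+\rank W$. The only cosmetic difference is that you route the computation of $\La^\perp_\Si$ explicitly through Lemma 2.7 and the verification of \eqref{hla}, whereas the paper asserts \eqref{gxh} directly; the substance is identical.
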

\begin{rema*} For $n =2$, this follows by inspection using \eq{gdi}, \eq{gdk}.
\end{rema*}
\begin{proof} For $\Si(z)$ satisfying \eq{gba}, \eq{gak}, denote
\be\label{gxg} J_\Si\, \eqadef\, \{ j = 2, \cdots, n \, | \, W_{jj} = 1 \}.\ee

From  \eq{he}, \eq{gxg}, \eq{hd}, the nonzero elements of $Z$ satisfying \eq{gbt} are limited to
\be\label{gxd} Z_{1j}, \; j \in J_\Si\ee
and
\be\label{gxf} Z_{jl}, \, Z_{lj}, \; j \neq l, \; j, l \in J_\Si.\ee

From \eq{hea}, \eq{heb}, \eq{hec}, \eq{gxd}, \eq{gxg}, there exists $(Z, \om_Z)$ satisfying \eq{gbt} such that
\be\label{gxe} \om_{Z, j} \neq 0, \; \; j \in J_\Si,\ee
thus establishing \eq{gboa}.

From \eq{gxf}, \eq{gxd}, \eq{gxg}, \eq{szb}
\be\label{gxc} \hat e_j \in \La^V_\Si, \; \, j = 1 \; \hbox{or \ } j \in J_\Si.\ee

Thus $\La^I_\Si $ obtained from \eq{szc} is trivial.
\be\label{gxh} \La^\perp_\Si = span \{ \hat e_j, \, j \neq 1, \; j \notin J_\Si\}\ee
and \eq{gxa} follows from \eq{gab}, \eq{gxc}, \eq{gxh}.
\end{proof}
\begin{lem} No generality is lost from assuming
\be\label{gya} \Th^{n,n}_{I^*, \, \perp} = \{ 0 \}.\ee
\end{lem}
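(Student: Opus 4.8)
The plan is to prove the sharper statement that $\Th^{n,n}_{I^*,\perp}$ contains no nontrivial system, so that assuming \eq{gya} costs nothing. Suppose, for contradiction, that $\Si(z)$ is such a system. Membership in the $\perp$ subclass \eq{gbc} supplies a nonzero $e_\perp$ with $\zeta_z(y) e_\perp = 0$ for every $y \in D$, as in \eq{cga}, and differentiating this identity once more gives $\zeta_{zz}(y) e_\perp = 0$ as well. First I would read off the action of the density/flux matrix in this direction: inserting $\zeta_{zz} e_\perp = 0$ into the expression \eq{cg} for $a(z)$ leaves
\be\label{gyb} a(z) e_\perp = \frac{\xi_z(z) e_\perp}{\xi(z)}\, \psi(z). \ee
Since $\Th^{n,n}$ requires $\rank a(z) = n$ throughout $D$ by \eq{tac}, the matrix $a(z)$ is nonsingular; in particular $\psi(z)\neq 0$ everywhere (else \eq{gyb} would give $a(z)e_\perp=0$ with $e_\perp\neq 0$), and then \eq{gyb} forces $\xi_z(z) e_\perp \neq 0$.

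Next I would eliminate the auxiliary variable in the equation of state. By \eq{gbl}, \eq{gbla} membership in the $I^*$ subclass gives $\La^I_\Si = \{0\}$, while $L = n$ in \eq{gab} gives $\La^\perp_\Si = \{0\}$. Theorem 2.10 then applies --- the normalisation \eq{sp} secures \eq{sbc}, and $\dim\{(Z,\om)\}_\zeta = 1$ is part of the definition of $\Th^{n,L}$ through \eq{sb}, \eq{cxa}, \eq{cxb} --- and its conclusion \eq{sc} reads $d_\sigma \le \dim\La^I_\Si + \dim\La^\perp_\Si = 0$. Thus $z_\zeta$ is absent from \eq{me} and the equation of state collapses to the form \eq{mea}, $\zeta = \sigma(\xi)$. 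Differentiating gives $\zeta_z = \sigma_\xi\, \xi_z$, so contracting with $e_\perp$ yields $0 = \zeta_z e_\perp = \sigma_\xi\,(\xi_z e_\perp)$; as $\sigma_\xi > 0$ by \eq{mf}, this forces $\xi_z e_\perp = 0$, contradicting the previous paragraph. No nontrivial $\Si(z)$ can therefore lie in $\Th^{n,n}_{I^*,\perp}$, which is the assertion \eq{gya}.

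The step I expect to be delicate is the invocation of theorem 2.10: its conclusion \eq{sc} depends on the connectivity and manifold-continuation hypotheses (the trajectories \eq{saa} filling out $\La^V_\Si = \La_\Si = \bbr^n$ and reaching $\partial D$). For $\Th^{n,n}_{I^*}$ these hypotheses involve only the single active sheet, since $\La^I_\Si$ and $\La^\perp_\Si$ are trivial, and I would discharge them using \eq{sbc} together with the $\{(Z,\om)\}_0$-reachability already assumed for members of $\Th^{n,L}$. If instead one prefers not to lean on theorem 2.10, the weaker reading of ``no generality is lost'' is available: because $e_\perp \cdot z$ is invariant under every symmetry, any would-be member may be reduced by fixing $e_\perp \cdot z$ through \eq{dh}, which lowers $n$ and deletes the $\perp$ degeneracy, so that the induction on $n$ misses nothing by assuming \eq{gya} at the outset.
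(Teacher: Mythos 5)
Your proof is correct but follows a genuinely different route from the paper's. The paper argues through the symmetry group itself: since $\zeta_z(y)e_\perp=0$ and (by \eqref{gcb}) $\xi$ is a function of $\zeta$ alone, the defining conditions $\zeta^*=\xi^*=0$ for $\{(Z,\om)\}_0$ never see the components $Z^\dag e_\perp$ and $\om\cdot e_\perp$, so these may be normalized to zero; with that normalization $e_\perp$ fails the defining condition \eqref{szb} of $\La^V_\Si$, contradicting the fact that $\La^V_\Si=\bbr^n$ is forced by $I^*$ together with $L=n$ in \eqref{gab} --- hence the phrasing ``no generality is lost.'' You instead extract a direct structural contradiction from the rank condition \eqref{tac}: differentiating $\zeta_z e_\perp=0$ gives $\zeta_{zz}e_\perp=0$, which reduces \eqref{cg} to $a(z)e_\perp=\bigl(\xi_z(z) e_\perp/\xi(z)\bigr)\,\psi(z)$, so nonsingularity of $a(z)$ forces $\xi_z e_\perp\neq 0$, while the collapse of the equation of state to the form \eqref{mea} forces $\xi_z e_\perp=0$. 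Both proofs hinge on that collapse; the paper simply cites the inclusion \eqref{gcb}, whereas you re-derive it from theorem 2.10 and thereby import that theorem's manifold-continuation and connectivity hypotheses, which are not part of the definition of $\Th^{n,n}$ --- you flag this yourself, and citing \eqref{gcb} directly, as your fallback suggests, is the cleaner discharge. What your route buys is the stronger conclusion that the class is literally empty, with no renormalization of representatives needed, at the cost of invoking \eqref{tac}, which the paper's argument never uses; your closing remark about reducing via \eqref{dh} is closer in spirit to the paper's ``no generality lost'' reading but is not what the paper actually does.
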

\begin{proof} Assume nontrivial $e_\perp$ in \eq{cga}.  For $\Si(z) \in \Th^{n,n}_{I^*}$, from \eq{gbl}, \eq{gab}, necessarily
\be\label{gyb} e_\perp \in \La^V_\Si.\ee

But from \eq{cga}, \eq{gcb}, for all $z \in D, \; \, \zeta(z)^*$ and $\xi(z)^*$ are independent of $e_\perp \cdot z$, so no loss of generality results from assuming
\be\label{gyc} Z^\dag e_\perp = \om \cdot e_\perp = 0 \ee
for all $(Z,\om) \in \{ ( Z,\om)\}_0$, precluding \eqref{gyb}.
\end{proof}

\subsection{The isentropic systems}

In this notation $\Th^{n, n}_{I^*, W, \om}$ designates the isentropic Euler system \eq{eaa}, \eq{eab}, \eq{eac} of dimension $n\ge 2$.  Using \eq{hdy}, \eq{gak} these systems are understood with $\zeta(z)$ determined from \eq{eab}; the equation of state is \eq{mea}.

The extended Euler systems \eq{ebb}, \eq{ebc}, \eq{ebd},  \eq{ecc}, \eq{ecd}, \eq{ece} of dimension $n \ge 3$ are designated $\Th^{n,n}_{C, T, \om^*, q}$.

The systems \eq{edc}, \eq{edd}, \eq{ede} of dimension $n \ge 3$ conserving entropy as opposed to energy, are designated $\Th^{n, n-1}_{W, \perp, T, I^*, \om}$.

Using \eq{gbi}, \eq{gbl} and then \eq{gca}, for $\Si(z) \in \Th^{n,n}$ in \eq{tbb} we use
\begin{align}\label{gff} \Th^{n,n} &= \Th^{n,n}_I \cup \Th^{n,n}_{I^*}\nonumber \\
&=\Th^{n,n}_T \cup \Th^{n,n}_{I, T^*}\cup \Th^{n,n}_{I^*}.\end{align}

From lemmas 6.2, 6.4, in the statement of theorem 5.1,
\be\label{gax} \ul{\Th}^{2,2} = \Th^{2,2}_W = \Th^{2,2}_{I^*, W, \om}.\ee

Using \eq{gax}, theorem 5.1 is proved by induction on $n$. Using \eq{gff}, it suffices to prove
\begin{lem} Assume $\Si (z), \Si'(z')$ satisfying \eq{tbb} with
\be\label{gha} \Si(z) \in \Th^{n,n}_{I^*}, \; \; \Si' (z') \in \Th^{n-1, n-1}_{I^*, W, \om}.\ee

Then
\be\label{ghb} \Si(z) \in \Th^{n, n}_{I^*, W, \om}.\ee
\end{lem}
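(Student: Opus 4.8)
The plan is to run the inductive step by reconstructing the full potential $\zeta(z)$ from its restrictions to the hyperplanes $z_n = c_e$ and then pinning down its $z_n$-dependence by the symmetry identity. First I would use \eq{uab}, which gives $\zeta'(z') = \zeta\binom{z'}{c_e}$, together with the inductive hypothesis $\Si'(z') \in \Th^{n-1,n-1}_{I^*,W,\om}$: by \eq{hdy} and \eq{gak} the reduced potential $\zeta'$ is, up to the freedom \eq{cd} and a $\calt_{QU}$ transformation, the standard $(n-1)$-dimensional isentropic form $z'_1 + \tfrac12\sum_{j=2}^{n-1}(z'_j)^2$. Since by \eq{tbb} every reduction lies in this class, and since by \eq{uad}, \eq{uae} a single $\calt_{QU}$ on $\Si(z)$ respecting \eq{gap} (so that $e_H = \hat e_1$ from \eq{sp} is preserved) induces a $c_e$-independent transformation on the reduced system, I would fix one such $\calt_{QU}$ to standardize the $z'$-Hessian simultaneously for almost all $c_e$. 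Absorbing the residual boost (translation) ambiguity into the $\om$-component, this reduces $\zeta$ to the form $\zeta(z) = z_1 + \tfrac12\sum_{j=2}^{n-1}z_j^2 + g(z_n)$ for some scalar $g \in C^3$.

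The heart of the argument is to show $g$ is a nondegenerate quadratic. Writing $\zeta_z = (1, z_2, \ldots, z_{n-1}, g'(z_n))$, I would invoke \eq{cu}: every $(Z,\om) \in \{(Z,\om)\}_0$ satisfies $\zeta_z(Zz+\om) \equiv 0$ in $z$. Separating this polynomial identity by the monomials in $z' = (z_1,\ldots,z_{n-1})$, the factor $g'(z_n)$ multiplies $(Zz+\om)_n = \sum_k Z_{nk}z_k + \om_n$, and the $z_n$-degree of this contribution exceeds that of every other term unless $g'$ is affine. Hence, were $g$ not quadratic, we would be forced to $Z_{nk} = 0$ for all $k$ and $\om_n = 0$, for every symmetry, giving $Z^\dag\hat e_n = \om\cdot\hat e_n = 0$ for all $(Z,\om)\in\{(Z,\om)\}_0$. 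This places $\hat e_n$ in $\La^I_\Si$ by \eq{szc}, \eq{szd} or in $\La^\perp_\Si$ by \eq{hi}; the former contradicts $\Si(z)\in\Th^{n,n}_{I^*}$ and the latter contradicts $L = n$ (via \eq{gab}). Therefore $g(z_n) = \tfrac12 a z_n^2 + b z_n + c$.

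Next I would rule out degeneracy and fix the sign. If $a = 0$ then $\zeta_{z_n}$ is constant, and $e_\perp = b\hat e_1 - \hat e_n$ (or $\hat e_n$ if also $b=0$) satisfies \eq{cga}, i.e. $\Si(z)$ is of type $\Th_\perp$; by lemma 6.5 (the reduction \eq{gya}) no generality is lost in excluding this, so $a \ne 0$. A final $\calt_{QU}$ rescaling and translating $z_n$, with the additive-constant freedom in $\zeta$, normalizes $g$ to $\tfrac12 a z_n^2$; evaluating the hyperbolicity condition \eq{btc} with $e_H = \hat e_1$ at $z_n = 0$ --- where, under the equation of state \eq{mea} and \eq{bwc}, the $(n,n)$ entry of $(e_H\cdot\psi)_{zz}$ reduces to $a/\sigma_\xi$ with $\sigma_\xi>0$ --- forces $a > 0$. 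The equation of state is of the form \eq{mea} because $\dim\La^I_\Si = 0$ (from $I^*$) and $L=n$ gives $\dim\La^\perp_\Si = 0$, so theorem 2.10 and \eq{sc} yield $d_\sigma \le 0$ and $z_\zeta$ is absent. Thus $\zeta(z) = z_1 + \tfrac12\sum_{j=2}^n z_j^2$, which is \eq{hdy} with $Y = \hat e_1$ and $W = \diag(0,1,\ldots,1)$, neither vanishing, establishing membership in $\Th_W$; and the boost symmetries produced by \eq{hea}--\eq{hec} for this $W,Y$ (with $D^\perp_{WY} = \{0\}$) carry $\om_Z \ne 0$, giving $\Th_\om$, whence \eq{ghb}.

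The step I expect to be the main obstacle is the first one: justifying that a single $\calt_{QU}$ on $\Si(z)$ standardizes the $z'$-Hessian of $\zeta$ for almost all $c_e$ at once. A priori the normalizing $Q''$ in \eq{tbb} may depend on $c_e$; the resolution is that the only genuinely $c_e$-dependent part is the Galilean boost (translation) component, which is not a $\calt_{QU}$ and is absorbed into the $\om$-freedom and the cross-terms $z_j z_n$, while the rotation/scaling part is forced to be $c_e$-independent by the smoothness of $\zeta$ and the compatibility \eq{uad}, \eq{uae}. Making this reduction rigorous --- rather than the subsequent symmetry bookkeeping --- is where the real work lies.
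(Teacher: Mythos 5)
Your overall strategy matches the paper's: reduce along the slices $z_n=c_e$, use the inductive hypothesis to pin down the $z'$-dependence of $\zeta$, then use the symmetry identity $\zeta_z(z)(Zz+\om)=0$ to force the $z_n$-dependence to be a nondegenerate quadratic, and finish with Lemma 6.4. But the step you yourself flag as ``where the real work lies'' is a genuine gap, and it is exactly the part the paper's proof is built to supply. Your reduction to $\zeta(z)=z_1+\tfrac12\sum_{j=2}^{n-1}z_j^2+g(z_n)$ assumes that a single $c_e$-independent normalization removes all $z_n$-dependence from the coefficients of the quadratic part; but the slice-wise identification of $\zeta(\cdot,c_e)$ with the standard isentropic form is only up to the reparametrization freedom \eq{cd} and a $c_e$-dependent $Q''$, so a priori one only obtains $\zeta(z)=\phi_1(z_n)z_1+\tfrac12\sum_{j=2}^{n-1}\phi_j(z_n)z_j^2+\phi_n(z_n)$ (the paper's \eq{ghk}), and nonconstant $\phi_1,\dots,\phi_{n-1}$ are entirely consistent with every slice being an isentropic Euler system. ``Absorbing the residual boost into $\om$'' does not touch these multiplicative factors. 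The paper closes this in two moves you do not have: first, $(Q''(z_n))^*=0$ together with the fact that $z_n^*$ cannot vanish identically (which follows from \eq{gha}) forces $Q''$ to be constant; second, the requirement that the inherited symmetry-group relation \eq{cjf}, \eq{cjg}, \eq{cjh} hold independently of $c_e$, again combined with $z_n^*\not\equiv 0$, forces $\phi_1=\cdots=\phi_{n-1}=1$ as in \eq{ghg}. Without some version of these arguments your starting form is unjustified.

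A secondary weakness: the degree count in your second step is not airtight as stated. The constant-in-$z'$ part of the identity gives $g'(z_n)(Z_{nn}z_n+\om_n)=-(Z_{1n}z_n+\om_1)$, which admits non-affine $g'$ (for instance $g'\propto 1/z_n$ paired with a scaling symmetry $Z_{nn}\neq 0$ compensated by $\om_1$), so the dichotomy ``either $g'$ is affine or all $Z_{nk}$ and $\om_n$ vanish'' needs an extra argument; the paper disposes of such cases via $\hat e_n\in\La^V_{\Si(z)}$ together with the exclusion of $\Th^{n,n}_{I^*,\perp}$, with hyperbolicity eliminating the logarithmic branch exactly as in the $n=2$ case \eq{gdj}. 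Your use of Lemma 6.5 to exclude $a=0$ and of hyperbolicity to fix the sign of $a$ is correct and in fact somewhat more explicit than the paper's treatment of that point.
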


\begin{proof} From \eq{tbb}, comparing \eq{tbb}, \eq{uaa} and using \eq{aq}, \eq{anb}, it follows that the dependent variables $z, z'$ are related by
\be\label{ghc} \begin{pmatrix} z_1\\ \vdots\\ z_{n-1}\end{pmatrix} \, = \, Q'' (c_e) z'\ee
with some nonsingular $Q''(c_e)$. As $c_e$ is arbitrary, by convention \eq{cje} the condition  \eq{ghc} can hold only with $z, z'$ related by
\be\label{ghd} \begin{pmatrix} z_1\\ \vdots\\ z_{n-1}\end{pmatrix} \, = \, Q''(z_n) z'.\ee

From the invariance of $\Si(z), \; \Si'(z')$ under transformations $\{ (Z,\om)\}_0, \{ (Z',\om')\}_0$, necessarily for any such transformations
\be\label{ghe} (Q''(z_n))^* = 0.\ee

From \eq{gha}, $z^*_n$ cannot vanish identically, so \eq{ghe} can hold only with $Q''$ independent of $z_n$.  Without loss of generality, we take
\be\label{ghf} Q''= I_{n-1}.\ee

The systems $\Si(z), \Si'(z')$ are related by \eq{uab}, with \eq{hdy}, \eq{gak} applicable to $\Si'(z')$.  Using \eq{ghc}, \eq{ghf}, necessarily
\begin{align}\label{ghj} \frac{\partial^2 \zeta (z', c_e)}{\partial z'_i \, \partial z'_j} &= 0 , \; \; \, i, j = 1, \cdots, n-1, \; \; i \neq j;\nonumber \\
\frac{\partial^2\zeta(z', c_e)}{(\partial z'_1)^2} &= 0;\nonumber \\
\frac{\partial^3 \zeta(z', c_e)}{(\partial z'_j)^3} &= 0, \;\; j = 2, \cdots, n-1.\end{align}

As \eq{ghj} must hold for almost all $c_e \in \bbr, \; z' \in D'$, these expressions vanish identically, and $\zeta(z)$ is necessarily of the form
\be\label{ghk} \zeta(z) = \phi_1 (z_n) z_1 + \tfrac 12 \suml^{n-1}_{j = 2} \phi_j (z_n) z^2_j + \phi_n (z_n)\ee
with scalar functions $\phi_j(z_n), \; \, j = 1, \cdots, n$ of class $C^2$.

The symmetry groups $\{ ( Z, \om)\}_0$ for $\Si(z)$ and $\Si'(z')$ are related by \eq{cjf}, \eq{cjg}, \eq{cjh} independently of $z, z'_j\; z_n$ in particular.  As $z^*_n$ cannot vanish identically,  \eq{ghk} can hold only with
\begin{align}\label{ghg} \zeta(z) &= \zeta'(z') + \phi_n (z_n)\nonumber \\
&=z_1 + \tfrac 12 \suml^{n-1}_{j = 2} z^2_j + \phi_n (z_n)\end{align}
again using \eq{hdy}, \eq{gak}, \eq{ghc}, \eq{ghf}.

From \eq{gha}, \eq{sza}, necessarily for $\Si(z)$
\be\label{ghl} \hat e_n \in \La^V_{\Si(z)}\ee
and \eq{ghg} can hold only with
\be\label{ghh} \phi_n(z_n) = c z^2_n,\; \; c \neq 0, \ee
implying
\be\label{ghi} \Si(z) \in \Th^{2,2}_W.\ee

Now \eq{ghb} follows from \eq{ghi} using lemma 6.4.
\end{proof}

\subsection{Systems with an extended temperature variable}
The proofs of theorems 5.2, 5.3, 5.4 are combined, as they relate to $Z$-systems
\be\label{gja} \Si(z) \in \Th^{n,L,\calt}_T, \; \; \;   n \ge 3, \; \, L = 2, \cdots, n\ee
with an associated equation of state of the form \eq{me}, satisfying \eq{mf}, \eq{mga}, \eq{tbb}, \eq{tbd}.

From \eq{mga}, \eq{tbd}, \eq{szb}, necessarily in \eq{gja}
\be\label{gjb} \hat e_n \notin \La^V_{\Si(z)}.\ee

Either \eq{gja} holds with $L = n$, satisfying \eq{tbc} and
\be\label{gjc} \Si(z) \in \Th^{n, n,\calt}_{I, T} =\bar\Th^{n,n}\ee
as introduced in theorem 5.2, or else $L = n-1$, \eq{tbf} holds and
\be\label{gjd} \Si(z) \in \Th^{n, n-1, \calt}_{\perp, I^*, T} = \Th^{*n, n-1} \ee
as introduced in theorem 5.4.  In either case, using \eq{mga}, \eq{tbd}, \eq{ebb}, \eq{edd}, $z_n$ is identified as an extended temperature variable.

At issue is characterization of the expression for $\zeta(z)$ in each case, exploiting shared symmetry group properties.  Satisfying \eq{tbb}, \eq{tbd}, each system class \eq{gjc}, \eq{gjd}  inherits a symmetry group $\{ (Z, \om)\}_0$ from that of the image $\Si'(z')$, as determined from \eq{cjf}, \eq{cjg}, \eq{cjh}, \eq{ar}.  The symmetry group $\{ ( Z, \om)\}_0$ for $\Si'(z')$ is obtained from \eq{hd}, using \eq{he}, \eq{hez}, \eq{hea}, \eq{heb}, \eq{hec}.  For simplicity of notation, without loss of generality we use \eq{gak}.

For each system class, the argument \eq{ghc}, \eq{ghd} \eq{ghe}, \eq{ghf} applies, so $\zeta(z)$ is of the form \eq{ghk}.  In each case the functions $\phi_1 (z_n), \cdots, \phi_n(z_n)$ are determined by details on the symmetry group.

The symmetry group for $\Si'(z')$ contains ``rotation" symmetries, with $Z$ antisymmetric,
\be\label{gkz} Z_{1j}, Z_{j1}, Z_{nj}, Z_{jn}, \om_j = 0, \; \; \, j = 1, \cdots, n,\ee
implying that in each case, \eq{ghk} is compatible with \eq{cj}, \eq{cu} only with
\be\label{gka} \phi_2 (z_n) = \cdots = \phi_{n-1} (z_n).\ee

The symmetry group for $\Si'(z')$ also includes ``Galilean" symmetries, with the only nonvanishing components $Z_{1j}, Z_{jn}, \om_j,  \,  j = 1, \cdots, n$, satisfying \eq{hea}, \eq{heb}, \eq{hec}.

Using \eq{ba}, \eq{cjf}, \eq{gka}, \eq{ghk} in \eq{cj}, \eq{cu}, for each system class the functions $\phi_1(z_n), \phi_2 (z_n)$ are restricted by
\be\label{gkb} \phi_1 (y_n) \Big( \suml^n_{j = 2} Z_{1j} y_j + \om_1\Big) + \phi_2 (y_n) \Big(\suml^{n-1}_{j = 2} y_j (Z_{jn} y_n + \om_j)\Big) = 0 \ee
for all $y \in D, \, Z_{1j}, Z_{jn}, \om_j$ satisfying \eq{hea}, \eq{heb}, \eq{hec}.

By inspection, the condition \eq{gkb} can hold only with
\be\label{gkc} Z_{1n} = \om_1 = 0\ee
and either
\be\label{gkd} \phi_1 (y_n) = \phi_2 (y_n); Z_{jn} = 0,\;  \om_j = - Z_{1j}, \; \; j = 2, \cdots, n-1;\ee
or else
\be\label{gke} \phi_1 (y_n) = 2y_n \phi_2 (y_n); Z_{jn} = - Z_{1j}, \om_j = 0,  \; j = 2, \cdots, n-1.\ee

Using \eq{szd}, \eq{gboa}, the alternative \eq{gkd} implies \eq{gbl}, \eq{gbo}, while \eq{gke} implies \eq{gbi}, \eq{gbp}.  Thus \eq{gjc} is refined to
\be\label{gkf} \Si(z) \in \Th^{n,n,\calt}_{I, T, \om^*} \ee
with \eq{gke} holding, and \eq{gjd} becomes
\be\label{gkg} \Si(z) \in \Th^{n, n-1, \calt}_{\perp, I^*, T, \om}\ee
with \eq{gkd} holding.

For systems \eq{gkg}, from \eq{ghk}, \eq{gkd},
\be\label{gkj} \zeta(z) = \phi_2(z_n) z_1 + \phi_2 (z_n) \suml^{n-1}_{j = 2}  z^2_j + \phi_n (z_n).\ee

For systems \eq{gkf}, from \eq{ghk}, \eq{gke}, we distinguish
\be\label{gkl} \hat \zeta (z) = 2 z_n \hat \phi_2 (z_n) z_1 + \hat \phi_2 (z_n) \suml^{n-1}_{j = 2} z^2_j + \phi_n (z_n).\ee

In both cases, \eq{sb}, \eq{cxa}, \eq{cxb} are satisfied with either
\[ Z_\zeta = 0, \; \; \om_\zeta = \begin{pmatrix}1\\0\\ \vdots\\0 \end{pmatrix}\]
or with
\[ Z_{\zeta, 1n} = 1, \; \; \, \om_\zeta = 0,\]
the other components of $Z_\zeta$ vanishing.  Thus theorem 2.10 applies in both cases, and an associated equation of state \eq{me} is presumed.

The two system classes \eq{gkf}, \eq{gkg} are necessarily related.
With $c_e$ satisfying \eq{dha}, setting $ l = n$ in \eq{dja}, \eq{dka}, using \eq{dkb}, \eq{dkc}, \eq{cjc},
\be\label{gla}\calt_{\hat e_n, c_e} \calt_{q, n, c_e} = \calt_{\hat e_n, c_e}.\ee

The alternative \eq{gbe}, \eq{gbd} is invariant under $\calt_{q, n, c_e}, $ so from \eq{gla}, \eq{gkf}, \eq{gkg}
\be\label{glb} \calt_{q, n, c_e} : \big(\Th^{n,n,\calt}_{I,T,\om^*} \cup \Th^{n,n-1,\calt}_{\perp, I^*,T,\om}
\big) \to \big(\Th^{n,n,\calt}_{I,T,\om^*} \cup \Th^{n,n-1,\calt}_{\perp, I^*,T,\om}\big).
\ee


In view of \eq{gbi}, \eq{gbl}, the condition \eq{glb} implies
\be\label{glc} \Th^{n, n, \calt}_{I,T,\om^*} \; = \;\calt_{q, n, c_e} \; \Th^{n, n-1, \calt}_{\perp, I^*, T, \om},\ee
of the form \eq{dja}.  Thus \eq{gkj}, \eq{gkl} must be compatible with \eq{dkb}, \eq{dkc}, \eq{dkd}, \eq{cjc}.

Using \eq{ube}, without loss of generality we identify systems \eq{gkg}, \eq{gkj} with $\Si(z)$  and systems \eq{gkf}, \eq{gkl} with $\tilde \Si (\tilde z)$ in \eq{dkb}, \eq{dkc}, \eq{cjc}.  Using \eq{dha}, we take
\be\label{gky} z_n \neq 0,\;  \sgn(z_n) = \sgn(c_e), \; \; z \in D.\ee

Then \eq{gkj}, \eq{gkl} are necessarily related by
\be\label{glf} \hat\phi_2 (z_n)
=
\frac{\phi_2 \big(\frac{c^2_e}{z_n}\big)}{z^2_n},\;\;\, \hat \phi_n (z_n) = \phi_n \bigg(\frac{c^2_e}{z_n}\bigg).\ee

From \eq{glc}, hyperbolicity is equivalent for the classes \eq{gkf}, \eq{gkg}.  For the class \eq{gkg}, by inspection necessary conditions for hyperbolicity include \eq{bwc}, \eq{bwd}.  Using \eq{gky}, the condition \eq{bwd} fails unless $\phi_2(z_n)$ is independent of $z_n$; for simplicity we take
\be\label{glg} \phi_n (z_n) = 1.\ee

Then using \eqref{glg} in \eq{gkj},
\be\label{gld} \zeta (z) = z_1 + \tfrac 12 \suml^{n-1}_{j = 2} \, z^2_j + \phi_n(z_n)\ee
for the systems \eq{gkg}.  Using \eq{glf}, \eq{glg} in \eq{gkl},
\be\label{gle} \hat \zeta(z) = \frac{z_1}{z_n} + \half \suml^{n-1}_{j = 2} \bigg(\frac{z_1}{z_n}\bigg)^2 + \phi_n \bigg(\frac{c^2_e}{z_n}\bigg)\ee
for the systems \eq{gkf}.

The function $\phi_n (\cdot)$ remains undetermined in \eq{gld}, \eq{gle}.

\begin{lem} For either class of systems in \eq{glc}, sufficient conditions for hyperbolicity are \eq{bwc}, \eq{bwf} for the equation of state \eq{me}, and $\phi_n$ satisfying
\be\label{gma} \phi_{n, z_n z_n} (z_n) \ge 0, \; \; \, z \in D.\ee
\end{lem}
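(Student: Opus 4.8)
The plan is to exploit the equivalence of hyperbolicity for the two classes in \eq{glc}, established immediately above, so that it suffices to verify the entropy condition \eq{btc} for the entropy-conserving class \eq{gkg}, whose prescribed function is \eq{gld}; hyperbolicity for the extended Euler class \eq{gkf}, with $\hat\zeta$ as in \eq{gle}, then follows automatically under $\calt_{q,n,c_e}$. Since both classes carry an equation of state \eq{me} with the \emph{linear} dependence $z_\zeta(z)=z_n$, $e_{z_\zeta}=\hat e_n$ of \eq{tbd}, \eq{bwa}, the applicable sufficiency criterion is the general one: each separate term in the expansion \eq{bwb} of $(e_H\cdot\psi)_{zz}$ is nonnegative semidefinite, which is guaranteed by \eq{bwc}, \eq{bwg}, \eq{bwd} together with the additional conditions \eq{bwe}, \eq{bwf}.

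First I would record the derivatives of $\zeta$ in \eq{gld}, using $e_H=\hat e_1$ from \eq{sp}, so that $\zeta_{(e_H\cdot z)}=\zeta_{z_1}$. Because $\zeta_{z_1}\equiv 1$ is constant, every $z_1$-cross-derivative vanishes, $\zeta_{(e_H\cdot z)z}=0$ and $\zeta_{(e_H\cdot z)zz}=0$, while the full Hessian is diagonal,
\[ \zeta_{zz}=\diag\bigl(0,\,1,\dots,1,\,\phi_{n,z_nz_n}(z_n)\bigr), \]
so that $\zeta_{zz}\ge 0$ holds precisely under the hypothesis \eq{gma}.

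With these simplifications the sufficient conditions collapse. The equation-of-state hypotheses $\sigma_{\xi\xi}<0<\xi,\sigma_\xi$ in \eq{bwc} and the condition \eq{bwf} are assumed outright, while the geometric requirement $\zeta_{(e_H\cdot z)}=1>0$ in \eq{bwc} is automatic; conditions \eq{bwg} and \eq{bwe} hold trivially since $\zeta_{(e_H\cdot z)z}=0$. The only substantive term is \eq{bwd}: its two cross terms drop out, and with $\zeta_{(e_H\cdot z)}=1$ it reduces to
\[ \zeta_{zz}-\frac{\sigma_{\xi\xi}}{\sigma^2_\xi}\,\zeta^\dag_z\zeta_z\ \ge\ 0, \]
which is a sum of two nonnegative semidefinite $n$-matrices: the diagonal Hessian above under \eq{gma}, and the rank-one outer product $\zeta^\dag_z\zeta_z$ scaled by $-\sigma_{\xi\xi}/\sigma^2_\xi>0$ (positive by \eq{bwc} and \eq{mf}). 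Hence \eq{btc} holds for \eq{gld}, and by the equivalence noted above the same conditions yield hyperbolicity for \eq{gle}.

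I expect no genuine analytic obstacle; the substance of the lemma is the reduced inequality just displayed, which is immediate once the diagonal form of $\zeta_{zz}$ is in hand. The care required is largely bookkeeping: confirming that the invariance of hyperbolicity under $\calt_{q,n,c_e}$ from \eq{glc} lets the single hypothesis \eq{gma} on the shared function $\phi_n$ cover both classes — recall $\hat\phi_n(z_n)=\phi_n(c^2_e/z_n)$ in \eq{glf} — and confirming that the linear form $z_\zeta=z_n$ is exactly what places us in the general sufficiency regime requiring \eq{bwe}, \eq{bwf}, rather than the reduced set appropriate to an equation of state of type \eq{mea}.
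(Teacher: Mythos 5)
Your proposal is correct and follows exactly the paper's route: reduce to the class \eq{gkg} with $\zeta$ as in \eq{gld} via the hyperbolicity-preserving transformation $\calt_{q,n,c_e}$ in \eq{glc}, then observe that \eq{bwg} and \eq{bwe} are immediate since $\zeta_{z_1}\equiv 1$, and that \eq{bwd} follows from \eq{gma} together with \eq{bwc}. The paper states these steps without the explicit Hessian computation; your version merely fills in that bookkeeping.
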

\begin{proof} As the transformation $\calt_{q, n, c_e}$ preserves hyperbolicity, using \eq{glc} it suffice to consider $\zeta(z) $ obtained from \eq{gld}. In this case, using \eq{sp}, the conditions \eq{bwg}, \eq{bwe} are immediate, and \eqref{bwd} follows from \eq{gld}, \eq{gma}.
\end{proof}

Extended and related Euler systems are obtained in the special case
\be\label{gmb} \phi_n (z_n) = 0.\ee


For the systems \eq{gld}, the conditions \eq{gba}, \eq{gbc} follow from \eq{gmb} by inspection.  Using  \eq{cc}, \eq{bt}, \eq{gld}, \eq{gmb}, for these systems
\be\label{gmc} \psi_n (z) = 0, \; \; a_{nj} (z) = 0, \; \; j = 1, \cdots, n,\ee
and using \eq{gld}, \eq{me} we readily obtain
\begin{align}\label{gmd} a_{in} (z) &= - \frac{\sigma_{z_\zeta} (\xi(z), z_\zeta(z))}{\sigma_\xi (\xi (z), \, z_\zeta (z))} \frac{\psi_i(z)}{\xi(z)}\nonumber \\
&= a_{i1} (z) \sigma_{z_\zeta} (\xi(z), z_\zeta (z)), \; \; \, i = 1, \cdots, n-1\end{align}

These are the systems \eq{edd}, \eq{ede}, conserving entropy  as opposed to energy.

For the systems \eq{gle}, the condition \eq{gbb} follows from \eq{gle}, \eq{gmb}, and \eq{gbf}, \eq{gbg} from \eq{glc}, \eq{djz}, \eq{gmd}.

These are the extended Euler systems \eq{ecc}, \eq{ecd}, \eq{ece} (where $z_n$ has been replaced by $-z_n$ in \eq{ebb} for simplicity in identification).

For either system class, hyperbolicity survives relaxation of the condition \eq{gmb} to \eq{gma}.
 Such results only in additional additive terms in $\psi_n (z)$, from \eq{cc}, and additional additive terms in $a_{nj } (z), \; \, j = 1, \cdots, n$, from \eq{bt}. Indeed, the systems so obtained are very similar to those obtained omitting $\phi_n$ from \eq{gld} or \eq{gle} with compensation in the corresponding equation of state, replacing $\sigma (\xi, z_n)$ by $\sigma (\xi, z_n) - \phi_n (z_n)$ or $\sigma(\xi, z_n) - \phi_n \Big(\frac{c}{z_n}\Big)$ respectively.

The proofs of theorem 5.2 and theorem 5.4 are thus completed.

The conclusion of theorem 5.3 is now immediate. Were \eq{tbe} to hold, for $e\in \La^I_\Si$, using \eq{cjh} we would have $\Si(z)$ satisfying
\be\label{gme} \calt_{e, c_e} \Si(z) \in \Th^{n-1, n-1, \calt}_{I, T, \om}\ee
contradicting \eq{gkf} with $n-1$ replacing $n$.

\newpage


\begin{thebibliography}{123}


\bibitem[BK]{BK} G.W. Bluman and S. Kumei, \emph{Symmetries and Differential Equations}, Springer-Verlag, New York (1989).

\bibitem[D]{D} C.M. Dafermos, \emph{Hyperbolic conservation laws in continuum physics}, Springer-Verlag, Berlin (2000).


\bibitem[FL]{FL} K.O. Friedrichs and P.D. Lax, \emph{Systems of conservation laws with a convex extension}, Proc. Nat. Acad. Sci {\bf 68} (1971), pp. 1686--1688.


\bibitem[G]{G} S.K. Godunov, \emph{An interesting class of quasilinear systems}, Dokl. Akad. Nauk SSSR {\bf 139} (1961), pp. 521--523.


\bibitem[L]{L} P.D. Lax, \emph{Shock waves and entropy}, in \emph{Contributions to Functional Analysis}, E.A. Zarontonello, ed., Academic Press, 1971.


\bibitem[M]{M} M.S. Mock, \emph{Systems of conservation laws of mixed type}, J. Diff. Eq. {\bf 37} (1980), pp. 70--88.


\bibitem[S1]{S1} M. Sever, \emph{Canonical form and symmetry group of systems of conservation laws}, J. Diff. Eq. {\bf 255} (2013), pp. 1607--1645.


\bibitem[S2]
{S2} M. Sever, \emph{Systems of conservation laws in higher space dimensions}, arXiv:2408-14896 (2004).







\end{thebibliography}
\end{document}